\newcommand{\Zzz}{\hbox{\Cp Z}}
\newcommand{\Rere}{\hbox{\rm Re}}
\newcommand{\Imim}{\hbox{\rm Im}}
\newenvironment{proof_special}{\noindent {\bf Proof:}}{}
\newenvironment{proof_}[1]{\noindent {\bf #1}}{{\qed}}
\newtheorem{theorem}{Theorem}[section]
\newtheorem{proposition}[theorem]{Proposition}
\newtheorem{lemma}[theorem]{Lemma}
\newtheorem{corollary}[theorem]{Corollary}
\newtheorem{definition}[theorem]{Definition}
\newcommand{\onethingatopanother}[2]
{\genfrac{}{}{0pt}{}{#1}{#2}}
\newcommand{\binomial}[2]
{\genfrac{(}{)}{0pt}{}{#1}{#2}}
\newcommand{\QSym}{{\rm QSym}}
\newcommand{\BQSym}{{\rm BQSym}}
\newcommand{\tensor}{\otimes}
\newcommand{\hz}{\hat{0}}
\newcommand{\ho}{\hat{1}}
\newcommand{\ab}{\av\bv}
\newcommand{\av}{{\bf a}}
\newcommand{\bv}{{\bf b}}
\newcommand{\cd}{\cv\dv}
\newcommand{\ctd}{\cv\mbox{-}2\dv}
\newcommand{\cv}{{\bf c}}
\newcommand{\dv}{{\bf d}}
\newcommand{\imagi}{{\rm i}}
\newcommand{\zab}{\Zzz\langle\av,\bv\rangle}
\newcommand{\zcd}{\Zzz\langle\cv,\dv\rangle}
\font\Cp = msbm10
\begin{document}

\title{Cyclotomic factors of the descent set polynomial}
\author[{\sc D.\ CHEBIKIN},
         {\sc R.\ EHRENBORG},
         {\sc P.\ PYLYAVSKYY} and
         {\sc M.\ READDY}]
        {{\sc DENIS CHEBIKIN},
         {\sc RICHARD EHRENBORG}, \\
         {\sc PAVLO PYLYAVSKYY} and
         {\sc MARGARET READDY}}

\date{\today}

\begin{abstract}

We introduce the notion of the
descent set polynomial as an alternative way of encoding 
the sizes of descent classes of permutations. 
Descent set polynomials exhibit interesting 
factorization patterns. We explore the question 
of when particular cyclotomic factors 
divide these polynomials.
As an instance we deduce that the proportion of odd
entries in the descent set statistics in the symmetric
group~${\mathfrak S}_{n}$ only depends on the number on $1$'s in the
binary expansion of $n$. We observe similar properties for the signed
descent set statistics.

\end{abstract}

\maketitle

\section{Introduction}

The study of the behavior of the descent sets
of permutations
in the symmetric group~${\mathfrak S}_{n}$
on $n$ elements usually involves such
questions as maximizing the
descent set or determining inequalities which hold among the
entries~\cite{de_Bruijn,Ehrenborg_Levin_Readdy,
Ehrenborg_Mahajan,Ehrenborg_Readdy_r,
Niven,Readdy,Sagan_Yeh_Ziegler}.
The usual way to encode the descent statistic information is via
the {\it {Eulerian polynomial}} $A_{n}(t) = \sum_{S} \beta_{n}(S) \cdot
t^{|S|+1}$, where $S$ runs over all subsets of $[n-1] = \{1, \ldots, n-1\}$,
and $\beta_{n}(S)$ denotes the number of permutations of size $n$ with descent
set $S$. We instead introduce the  descent set polynomial  where the
statistic of interest appears in the exponent of the variable
$t$, rather than as a coefficient.  That is, the 
{\em $n$th descent set polynomial}
is defined by
$$ 
     Q_{n}(t) = \sum_{S} t^{\beta_{n}(S)} , 
$$ 
where $S$ ranges over all subsets of $[n-1]$.

The degree of the descent set polynomial is given by the $n$th Euler
number, which grows faster than an exponential. Despite this,
these polynomials appear to have curious factorization
properties, in particular, 
having factors which
are {\it {cyclotomic polynomials}};
see Table~\ref{table_P}. This
paper explains the
occurrence of certain cyclotomic factors.
We have 
displayed these in boldface in the tables.
Both combinatorial 
and number-theoretic properties
(for
example, the number of $1$'s in binary expansion of~$n$
and the prime factorization of~$n$) are involved
in our investigations.

The divisibility by cyclotomic factors is related to the remainders of
sizes of descent classes modulo certain integers. As a simplest example,
$Q_{n}(t)$ is divisible by the second cyclotomic polynomial $\Phi_2$
if and only if the number of even descent set classes is equal to the
number of odd descent set classes. In other words, the proportion of even
and odd entries in the descent set statistics is the same (in the notation
below, $\rho(n) = 1/2$) if and only if $-1$ is a root of the descent set
polynomial. Somewhat surprisingly, whether or not $n$ has this
property depends only on the number of $1$'s in the binary expansion
of $n$.

The paper proceeds as follows.  
In Section~\ref{section_proportion} we look at 
the proportion of odd entries in
the descent set statistics. In Section~\ref{section_QSym} we
discuss this result from the viewpoint of quasisymmetric functions
related to posets. 
We consider similar
properties for the signed descent set statistics
in Section~\ref{section_signed}.
The natural setting for this question is to
look at flag vectors of zonotopes.
In Section~\ref{section_twice_prime} we explore patterns of descent
statistics modulo~$2p$ for prime $p$. 
Here we introduce the descent set
polynomial and consider
divisibility by cyclotomic
polynomials. 
In Section~\ref{section_quadratic}
we explore when the descent set polynomial is divisible by
the quadratic factors $\Phi_2^2$, $\Phi_4^2$ and $\Phi_{2p}^2$. 
In Section~\ref{section_signed_descent} we
introduce type $B$ quasisymmetric functions and 
the {\it {signed descent set
polynomials}}. We use the former to describe divisibility patterns of
the latter. Finally, in 
the concluding remarks
we make a number of observations on the
data presented in Tables~\ref{table_P} and~\ref{table_P_pm}.

\section{The proportion of odd entries}
\label{section_proportion}

\begin{table}[t!]
$$
\begin{array}{c c c c}
k & n = 2^{k} - 1 & \rho(n) & 1/2 - \rho(n) \\ \hline
1 &  1 &  1  & -1/2 \\
2 &  3 & 1/2 &   0  \\
3 &  7 & 1/2 &   0  \\
4 & 15 & 29/2^{6} &   3/2^{6}  \\
5 & 31 & 3991/2^{13} &   3 \cdot 5 \cdot 7 /2^{13}
\end{array}
$$
\caption{The proportion $\rho(n)$ for at most five
$1$'s in the binary expansion of $n$.}
\label{table_one}
\end{table}

For $\pi = \pi_{1} \cdots \pi_{n}$ a permutation in
${\mathfrak S}_{n}$, recall that the {\em descent set} 
of $\pi$ is 
the subset of $[n-1]$ given by
$\{i \: : \: \pi_{i} > \pi_{i+1}\}$.
For a subset $S$ of $[n-1]$
the number of permutations
in~${\mathfrak S}_{n}$ with descent set $S$ is denoted by $\beta_{n}(S)$.

Let $\rho(n)$ denote the proportion of
odd entries in the descent statistics
in the symmetric group~${\mathfrak S}_{n}$, that is,
$$    \rho(n)
 =
   \frac{|\{S \subseteq [n-1] \: : \:
           \beta_{n}(S) \equiv 1 \bmod 2\}|}
       {2^{n-1}}     .   $$
For instance, $\rho(3) = 1/2$ since in the data
$\beta_{3}(\varnothing) = \beta_{3}(\{1,2\}) = 1$
and
$\beta_{3}(\{1\}) = \beta_{3}(\{2\}) = 2$
exactly half of the entries are odd.

The first few values of the proportion $\rho(n)$
are shown in Table~\ref{table_one}. In this section we prove the following
result.
\begin{theorem}
The proportion of odd entries in the descent set statistics
$\rho(n)$ only depend on the number of $1$'s
in the binary expansion of the integer $n$.
\label{theorem_main}
\end{theorem}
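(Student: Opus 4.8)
The plan is to reduce everything modulo $2$ and exploit the binary (Lucas) structure of the counting coefficients. First I would pass from $\beta_n$ to the cumulative statistic $\alpha_n(S) = |\{\pi \in {\mathfrak S}_n : \mathrm{Des}(\pi) \subseteq S\}|$, which for $S = \{s_1 < \cdots < s_j\}$ is the multinomial coefficient $\binom{n}{s_1, s_2 - s_1, \ldots, n - s_j}$. Since $\alpha_n(S) = \sum_{T \subseteq S} \beta_n(T)$ and the M\"obius function of the Boolean lattice is $(-1)^{|S| - |T|} \equiv 1 \pmod 2$, M\"obius inversion gives $\beta_n(S) \equiv \sum_{T \subseteq S} \alpha_n(T) \pmod 2$. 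By Kummer's theorem a multinomial coefficient is odd exactly when the parts being added have pairwise disjoint binary supports (no carries occur), so $\alpha_n(T)$ is odd precisely when the composition of $n$ encoded by $T$ has parts whose binary supports partition the set $B$ of positions of the $1$'s of $n$. Writing $g$ for the indicator of this event and $h(S) = \beta_n(S) \bmod 2$, we obtain $h(S) = \sum_{T \subseteq S} g(T)$ over $\mathbb{F}_2$, and $\rho(n) = |\{S : h(S) = 1\}| / 2^{n-1}$. Note that $g$ is supported on the cut-sets of the ``no-carry'' compositions, which are in bijection with ordered set partitions of $B$; hence $|\mathrm{supp}(g)|$ already depends only on $k = |B|$, but the support of its $\mathbb{F}_2$-zeta transform $h$ does not factor over coordinates (for $n = 3$ one computes the Boolean function $h(x_1,x_2) = 1 + x_1 + x_2$), so a symmetry argument is needed rather than a naive product formula.

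The core of the proof is the following reduction lemma: inserting a $0$ into the binary expansion of $n$ leaves $\rho(n)$ unchanged. I would first prove the case of insertion at the bottom, that is, $\rho(2n) = \rho(n)$. Here every no-carry composition of $2n$ has all parts even (bit $0$ is absent from $B$), so its cut-set consists of even numbers; halving sets up a bijection $\mathrm{supp}(g_{2n}) = 2 \cdot \mathrm{supp}(g_n)$. Consequently $h_{2n}(S)$ depends only on the even elements of $S \subseteq [2n-1]$ and equals $h_n$ of the halved set, while the $n$ odd positions of $[2n-1]$ are free and do not affect the parity. This yields $|\{S : h_{2n}(S) = 1\}| = 2^n \cdot |\{U : h_n(U) = 1\}|$, and dividing by $2^{2n-1}$ gives $\rho(2n) = \rho(n)$. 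The same bookkeeping, applied to a $0$ inserted at an interior position $j$, again makes an explicit block of positions of $[n'-1]$ free and supplies a factor of $2$ for each, exactly matching the change $2^{n'-1}/2^{n-1}$ in the normalization.

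Granting the lemma, the theorem is immediate: starting from any $n$ with exactly $k$ ones in its binary expansion, delete the $0$'s one at a time to reach $2^k - 1$, so that $\rho(n) = \rho(2^k - 1)$, a quantity depending only on $k$.

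The main obstacle is the interior case of the insertion lemma. The bottom case is transparent because doubling interacts cleanly with the zeta transform, but for an interior $0$ the scaling acts only on the high-order part of each composition, so one must exhibit a parity-preserving, measure-preserving bijection between the subsets of $[n-1]$ and those of $[n'-1]$ that respects both the no-carry condition and the subset (zeta) order simultaneously. A secondary point requiring care is the bookkeeping of Kummer's criterion at the level of coarsenings: $h(S)$ counts, modulo $2$, the no-carry compositions that coarsen the composition of $S$, and one must check this count transforms correctly under the insertion bijection.
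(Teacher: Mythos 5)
You have reduced the theorem to an ``insertion lemma'' (inserting a $0$ into the binary expansion of $n$ leaves $\rho(n)$ unchanged), but you only prove the bottom-insertion case $\rho(2n)=\rho(n)$, and the interior case --- which you yourself flag as ``the main obstacle'' --- is left entirely unproven. That case is the whole content of the theorem: bottom insertions only strip trailing zeros, so they show $\rho$ depends on the odd part of $n$, and any two integers with the same number of binary ones but different odd parts (already $5$ versus $3$, or $11$ versus $7$) require the interior case. The difficulty there is genuine: inserting a zero at an interior position doubles the high-order bits while fixing the low-order ones, so the values of the essential cut-sets transform nonuniformly ($s\mapsto s+(\text{high part of }s)$) and there is no analogue of your clean halving bijection; you would need a bijection intertwining the no-carry condition with the zeta (subset) order simultaneously, and you give no construction. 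As it stands the argument proves only that $\rho(2^m n)=\rho(n)$, not the theorem.

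Ironically, the missing idea is already latent in your setup, and it is how the paper proceeds --- with no induction on insertions at all. After the same MacMahon-plus-Kummer reduction mod $2$, call $s\in[n-1]$ \emph{essential} if it is a subset sum of $\{2^{j_1},\ldots,2^{j_k}\}$, and let $E$ be the set of essential elements. Your observation that $\mathrm{supp}(g)$ consists of cut-sets of no-carry compositions says precisely that $g(T)=0$ unless $T\subseteq E$ (each partial sum of a no-carry composition is a subset sum of the binary support of $n$; this is the paper's Lemma~\ref{alpha_S_zero_mod_2}). Hence $h(S)=\sum_{T\subseteq S\cap E}g(T)=h(S\cap E)$ over $\mathbb{F}_2$: \emph{all} $(n-1)-|E|$ nonessential coordinates are free at once, for every $n$, not just the odd coordinates in the doubling case (Lemma~\ref{lemma_nonessential_element} in the paper). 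Moreover, for $T\subseteq E$ with elements $s_r=\sum_{i\in B_r}2^{j_i}$, Kummer gives $g(T)=1$ if and only if $B_1\subseteq B_2\subseteq\cdots\subseteq B_{m-1}$ (Lemma~\ref{lemma_alpha_S_odd}), a condition depending only on the index patterns $B_r\subseteq[k]$; so $h$ restricted to subsets of $E$ is identical, under the order-preserving bijection $E\to[2^k-2]$, to the corresponding function for $n=2^k-1$ (Lemma~\ref{lemma_beta}). Dividing out the free coordinates yields $\rho(n)=\bigl|\{S\subseteq E : h(S)=1\}\bigr|/2^{|E|}=\rho(2^k-1)$ in a single step. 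You had both ingredients --- the support description of $g$ and its dependence only on the ordered-set-partition pattern --- but stopped short of combining them, detouring instead into an induction whose hard case you could not close.
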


Recall a composition of $n$ is a list
$\gamma = (\gamma_{1}, \gamma_{2}, \ldots, \gamma_{m})$
of positive integers such that
$\gamma_{1} + \gamma_{2} + \cdots + \gamma_{m} = n$.
The multinomial coefficient is defined by
$$    \binomial{n}{\gamma}
    =
      \frac{n!}{\gamma_{1}! \cdot \gamma_{2}! \cdots \gamma_{m}!}  .  $$
Define a bijection $D$
between
subsets
of the set $[n-1]$
and compositions of $n$
by sending the set
$\{s_{1} < s_{2} < \cdots < s_{m-1}\}$
to the composition
$(s_{1}, s_{2}-s_{1}, s_{3}-s_{2}, \ldots, n-s_{m-1})$.
Let $\alpha_{n}(S)$ denote the multinomial coefficient
$\binomial{n}{D(S)}$.
The following is a classic result due to MacMahon:
\begin{lemma}
Let $S$ be a subset of $[n-1]$. Then the number of permutations in
$\mathfrak{S}_{n}$ with descent set \emph{contained in $S$} is
$\alpha_{n}(S)$, and we have
$$
\beta_{n}(S) = \sum_{T\subseteq S} (-1)^{|S-T|} \cdot \alpha_{n}(T).
$$
\label{lemma_MacMahon}
\end{lemma}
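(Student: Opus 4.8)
The plan is to establish the two assertions of the lemma in turn. First I would show that $\alpha_{n}(S)$ counts precisely the permutations of $\mathfrak{S}_{n}$ whose descent set is \emph{contained} in $S$; the inclusion-exclusion formula for $\beta_{n}(S)$ then follows from this count by Möbius inversion on the Boolean lattice of subsets of $[n-1]$.

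For the first assertion, write $S = \{s_{1} < s_{2} < \cdots < s_{m-1}\}$ and adopt the endpoint conventions $s_{0} = 0$ and $s_{m} = n$, so that $D(S) = (\gamma_{1}, \ldots, \gamma_{m})$ with $\gamma_{j} = s_{j} - s_{j-1}$. A permutation $\pi = \pi_{1} \cdots \pi_{n}$ has descent set contained in $S$ exactly when it has no descent at any position outside $S$, that is, when $\pi$ is increasing on each block of consecutive positions $\{s_{j-1}+1, \ldots, s_{j}\}$ for $1 \leq j \leq m$. Such a permutation is therefore completely determined by the unordered assignment of the $n$ values to the $m$ blocks, of respective sizes $\gamma_{1}, \ldots, \gamma_{m}$, since within each block the entries are forced to appear in increasing order. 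The number of such assignments is the number of ways to distribute an $n$-element set into labelled blocks of the prescribed sizes, namely the multinomial coefficient $\binomial{n}{D(S)} = \alpha_{n}(S)$. Conversely, every such assignment yields a unique permutation with descent set contained in $S$, so the count is exactly $\alpha_{n}(S)$.

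For the second assertion, I would note that every permutation in $\mathfrak{S}_{n}$ has a well-defined descent set, and that this descent set is contained in $S$ if and only if it equals some subset $T \subseteq S$. Classifying the permutations counted by $\alpha_{n}(S)$ according to their exact descent set thus gives the relation $\alpha_{n}(S) = \sum_{T \subseteq S} \beta_{n}(T)$. Applying Möbius inversion over the Boolean lattice, whose Möbius function is $\mu(T,S) = (-1)^{|S-T|}$, inverts this to the stated formula $\beta_{n}(S) = \sum_{T \subseteq S} (-1)^{|S-T|} \cdot \alpha_{n}(T)$.

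Since this is a classical identity of MacMahon, no step is genuinely difficult. The only point requiring care is matching the bijection $D$ to the block decomposition, in particular the endpoint conventions $s_{0} = 0$ and $s_{m} = n$, so that the last part of the composition is indeed $n - s_{m-1}$ and the multinomial coefficient corresponds exactly to $\alpha_{n}(S)$. I would confirm the conventions against the small case $n = 3$ using the values $\beta_{3}(\varnothing) = \beta_{3}(\{1,2\}) = 1$ and $\beta_{3}(\{1\}) = \beta_{3}(\{2\}) = 2$ recorded in the text.
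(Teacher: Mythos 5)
Your proposal is correct and complete: the block-decomposition argument identifies permutations with descent set contained in $S$ with distributions of $[n]$ into labelled increasing blocks of sizes $D(S)$, giving the multinomial count $\alpha_{n}(S)$, and the identity $\alpha_{n}(S) = \sum_{T \subseteq S} \beta_{n}(T)$ together with M\"obius inversion on the Boolean lattice yields the stated formula. Note that the paper itself offers no proof of this lemma, citing it as a classic result of MacMahon; your argument is exactly the standard one, including the correct endpoint conventions $s_{0}=0$, $s_{m}=n$ matching the bijection $D$.
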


We need Kummer's theorem for the multinomial coefficient version.
\begin{theorem}
For a prime $p$ and a composition
$\gamma = (\gamma_{1}, \gamma_{2}, \ldots, \gamma_{m})$
of $n$,
the largest power $d$ such that
$p^{d}$ divides the multinomial coefficient $\binomial{n}{\gamma}$
is equal to the number
of carries when adding
$\gamma_{1} + \gamma_{2} + \cdots + \gamma_{m}$ in base $p$.
\end{theorem}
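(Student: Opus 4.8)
The plan is to reduce the statement to Legendre's formula for the $p$-adic valuation of a factorial. Write $\nu_{p}(k)$ for the largest exponent with $p^{\nu_{p}(k)}$ dividing $k$, and let $s_{p}(k)$ denote the sum of the base-$p$ digits of $k$. Legendre's formula asserts that $\nu_{p}(k!) = (k - s_{p}(k))/(p-1)$, which follows by summing $\lfloor k/p^{i} \rfloor$ over $i \geq 1$ and telescoping. Since $\binomial{n}{\gamma} = n!/(\gamma_{1}! \cdots \gamma_{m}!)$ with $\gamma_{1} + \cdots + \gamma_{m} = n$, additivity of $\nu_{p}$ over products yields
$$ \nu_{p}\left(\binomial{n}{\gamma}\right) = \frac{1}{p-1}\left(\sum_{j=1}^{m} s_{p}(\gamma_{j}) - s_{p}(n)\right) . $$
First I would verify Legendre's formula and this displayed identity, which are routine. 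The content of the theorem then rests entirely on the combinatorial identity
$$ \sum_{j=1}^{m} s_{p}(\gamma_{j}) - s_{p}(n) = (p-1) \cdot c , $$
where $c$ is the number of carries in the base-$p$ addition $\gamma_{1} + \cdots + \gamma_{m} = n$. Granting this, dividing by $p-1$ identifies the valuation with $c$ and finishes the proof.

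To establish the carry identity I would proceed by induction on the number of summands $m$, performing the addition one term at a time and tracking the running partial sums $n_{k} = \gamma_{1} + \cdots + \gamma_{k}$. Equivalently, one may use the telescoping factorization $\binomial{n}{\gamma} = \prod_{k=2}^{m} \binomial{n_{k}}{\gamma_{k}}$, which reduces the valuation to a sum of two-term contributions $\nu_{p}(\binomial{n_{k}}{\gamma_{k}})$, each governing the single addition $n_{k-1} + \gamma_{k}$. The base case $m=2$ is the classical statement, obtained directly by comparing $s_{p}(a) + s_{p}(b)$ with $s_{p}(a+b)$: each carry out of a digit position replaces a value of $p$ in one place by a contribution of $1$ in the next higher place, a net loss of exactly $p-1$ in the total digit sum, so $s_{p}(a) + s_{p}(b) - s_{p}(a+b)$ counts the carries with weight $p-1$. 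Summing the per-step carry counts along the sequence $n_{2}, n_{3}, \ldots, n_{m} = n$ then recovers the full identity.

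The main obstacle will be making the bookkeeping precise rather than the algebra. Specifically, I must fix an unambiguous meaning for \emph{the number of carries} when there are more than two summands — cascading carries that propagate through several positions must be counted with multiplicity — and I must check that the digit-sum accounting is consistent regardless of the order in which the $\gamma_{j}$ are added, which is automatic from the formula above since $\sum_{j} s_{p}(\gamma_{j}) - s_{p}(n)$ is manifestly order-independent. Once this local carry-versus-digit-sum analysis is carried out carefully in the two-summand case, the inductive passage to arbitrary $m$ via the telescoping product is purely formal.
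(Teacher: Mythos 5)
Your proposal is correct, but there is nothing in the paper to compare it against: the paper states this result without proof, simply invoking it as the multinomial version of Kummer's theorem (and notes that the corollary it needs also follows from Lucas' congruence). Your argument is the standard textbook derivation of that classical theorem, and it is sound. Legendre's formula $\nu_{p}(k!) = (k - s_{p}(k))/(p-1)$ together with additivity of $\nu_{p}$ gives $\nu_{p}\bigl(\binomial{n}{\gamma}\bigr) = \bigl(\sum_{j} s_{p}(\gamma_{j}) - s_{p}(n)\bigr)/(p-1)$ exactly as you write, and the two-summand carry identity $s_{p}(a) + s_{p}(b) - s_{p}(a+b) = (p-1)c$ is correctly justified by the local digit-sum accounting. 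You also correctly flag the one genuine subtlety, namely what ``the number of carries'' means for $m > 2$ summands, and your resolution --- sequential addition via the telescoping product $\binomial{n}{\gamma} = \prod_{k=2}^{m} \binomial{n_{k}}{\gamma_{k}}$, with order-independence automatic from the symmetric digit-sum formula --- is exactly right. It is worth noting that the alternative convention, simultaneous column-by-column addition with each carry counted according to its value (carries can exceed $1$ per column when $m > 2$), yields the same total: if $c_{i}$ denotes the carry into position $i$, summing the digit relation $e_{i} = c_{i} + \sum_{j} d_{j,i} - p\, c_{i+1}$ over all positions gives $\sum_{j} s_{p}(\gamma_{j}) - s_{p}(n) = (p-1)\sum_{i \geq 1} c_{i}$, so the two readings of the theorem's statement agree and your proof covers both.
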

As a corollary we can
determine whether a multinomial coefficient
is even or odd.
This corollary also follows from Lucas' congruence for binomial
coefficients.
\begin{corollary}
For a composition
$\gamma = (\gamma_{1}, \gamma_{2}, \ldots, \gamma_{m})$
of $n$,
the multinomial coefficient $\binomial{n}{\gamma}$ is odd
if and only if 
there are no carries when adding
$\gamma_{1} + \gamma_{2} + \cdots + \gamma_{m}$ in base $2$, that is,
for all $i \neq j$,
the binary expansions of $\gamma_{i}$ and $\gamma_{j}$
have no powers of $2$ in common.
\label{corollary_to_Kummer}
\end{corollary}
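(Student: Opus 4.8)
The plan is to invoke the multinomial form of Kummer's theorem stated above with the prime $p = 2$. Recall that $\binomial{n}{\gamma}$ is odd precisely when it is not divisible by $2$, that is, when the largest power $d$ of $2$ dividing $\binomial{n}{\gamma}$ equals $0$. By Kummer's theorem this exponent $d$ counts the number of carries occurring when the sum $\gamma_{1} + \gamma_{2} + \cdots + \gamma_{m}$ is performed in base $2$. Hence $\binomial{n}{\gamma}$ is odd if and only if this base-$2$ addition produces no carries at all, and it only remains to translate the carry-free condition into the stated condition on binary expansions.

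First I would dispatch the easy direction. Suppose the binary expansions of the $\gamma_{i}$ are pairwise disjoint, meaning no power of $2$ appears in the expansions of two distinct summands. Then in each column of the base-$2$ addition at most one summand contributes a $1$, so every column digit sum is at most $1$ and no carry is ever generated. For the converse I would work from the least significant bit upward to account for carry propagation: assuming the addition is carry-free, the carry entering column $0$ is $0$, so the digit sum there equals the output digit and is at most $1$, which forces the carry into column $1$ to be $0$ as well; iterating, every incoming carry is $0$ and every column digit sum is at most $1$. Thus at most one $\gamma_{i}$ carries a $1$ in any given position, which is exactly the assertion that the supports of the binary expansions are pairwise disjoint. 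Combining the two directions yields the equivalence.

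The step requiring the most care is this converse, since one must rule out the possibility that a positive column digit sum is masked by an incoming carry of the opposite parity. The carry-free hypothesis is precisely what forbids such masking, and tracking the carries position by position keeps the argument clean; everything else is a direct specialization of the already-established multinomial Kummer theorem. For completeness one could alternatively note, as indicated in the statement, that the same conclusion drops out of Lucas' congruence by writing the multinomial coefficient as a product of binomial coefficients, but the Kummer route above is the most economical.
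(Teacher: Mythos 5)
Your proposal is correct and follows exactly the route the paper intends: the paper states this corollary as an immediate specialization of Kummer's theorem to the prime $p=2$ (offering no written proof beyond noting that Lucas' congruence would also work), which is precisely your argument. Your additional column-by-column verification that ``no carries'' is equivalent to pairwise disjoint binary supports is a careful filling-in of the detail the paper leaves implicit, and it is sound.
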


Let the binary expansion of $n$ be
$n = 2^{j_{1}} + 2^{j_{2}} + \cdots + 2^{j_{k}}$,
where
$j_{1} > j_{2} > \cdots > j_{k}$.
Call an element of $[n-1]$ \emph{essential}
if it can be expressed as $\sum_{i\in B} 2^{j_i}$ for some
nonempty proper subset $B$ of $[k]$; otherwise, call this element
\emph{nonessential}.

\begin{lemma}
If $S\subseteq [n-1]$ contains a nonessential element $s_{i}$, then
$\alpha_{n}(S)$ is even, that is, $\alpha_{n}(S) \equiv 0 \bmod 2$.
\label{alpha_S_zero_mod_2}
\end{lemma}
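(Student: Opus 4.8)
The plan is to prove the contrapositive: if $\alpha_{n}(S)$ is odd, then every element of $S$ must be essential. The only tool needed is Corollary~\ref{corollary_to_Kummer}, which tells us exactly when the multinomial coefficient $\alpha_{n}(S) = \binomial{n}{D(S)}$ is odd, namely when the parts of the composition $D(S)$ have pairwise disjoint binary supports.

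First I would write $S = \{s_{1} < s_{2} < \cdots < s_{m-1}\}$, so that $D(S) = (\gamma_{1}, \ldots, \gamma_{m})$ with $\gamma_{\ell} = s_{\ell} - s_{\ell-1}$ (setting $s_{0} = 0$ and $s_{m} = n$). Assuming $\alpha_{n}(S)$ is odd, Corollary~\ref{corollary_to_Kummer} gives that the binary expansions of $\gamma_{1}, \ldots, \gamma_{m}$ use pairwise disjoint sets of powers of $2$. Since these parts sum to $n = 2^{j_{1}} + \cdots + 2^{j_{k}}$ with no carries, the powers of $2$ occurring across all parts are exactly $2^{j_{1}}, \ldots, 2^{j_{k}}$, each appearing in precisely one part.

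The key step is then to read off the partial sums. Each element $s_{\ell} \in S$ equals $\gamma_{1} + \cdots + \gamma_{\ell}$, and because the supports of the parts are disjoint this partial sum is simply the sum of those bits $2^{j_{i}}$ occurring among the first $\ell$ parts; that is, $s_{\ell} = \sum_{i \in B_{\ell}} 2^{j_{i}}$ for some $B_{\ell} \subseteq [k]$. Since $1 \le s_{\ell} \le n-1$, the set $B_{\ell}$ is nonempty and proper, so $s_{\ell}$ is essential by definition. Thus oddness of $\alpha_{n}(S)$ forces every element of $S$ to be essential, which is the desired contrapositive.

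I do not expect a genuine obstacle here once the reduction to Corollary~\ref{corollary_to_Kummer} is in place; the single point requiring care is the bookkeeping that disjointness of the binary supports propagates to the partial sums, forcing every element of $S$ to be a sum of a subset of the bit-positions of $n$. Granting the hypothesis that $S$ contains a nonessential element, this directly contradicts oddness and hence yields $\alpha_{n}(S) \equiv 0 \bmod 2$.
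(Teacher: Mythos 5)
Your proposal is correct and uses essentially the same approach as the paper: both arguments rest entirely on Corollary~\ref{corollary_to_Kummer} and the observation that disjointness of the binary supports of the parts of $D(S)$ forces every partial sum to be a sub-sum of the binary digits of $n$. The paper's version is just the direct (rather than contrapositive) phrasing: a nonessential $s_{i}$ creates a carry in the addition $(\gamma_{1}+\cdots+\gamma_{i})+(\gamma_{i+1}+\cdots+\gamma_{m}) = s_{i}+(n-s_{i}) = n$, so $\alpha_{n}(S)$ is even.
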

\begin{proof}
Let $\gamma = (\gamma_{1}, \gamma_{2}, \ldots, \gamma_{m})$
be the associated composition $D(S)$.
Notice in the addition
$(\gamma_{1} + \cdots + \gamma_{i})
+ 
(\gamma_{i+1} + \cdots + \gamma_{m})
   =
s_{i} + (n-s_{i}) =  n$
there is a carry in base $2$.
Hence it follows from
Corollary~\ref{corollary_to_Kummer}
that $\alpha_{n}(S)$ is even.
\end{proof}

\begin{lemma}
Let $S$ be a subset of $[n-1]$, and suppose that $i \in [n-1] - S$
is an nonessential element. Then
$$
\beta_{n}(S) \equiv \beta_{n}(S \cup \{i\}) \bmod 2.
$$
\label{lemma_nonessential_element}
\end{lemma}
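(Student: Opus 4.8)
The plan is to work modulo $2$ throughout and feed MacMahon's inclusion--exclusion formula (Lemma~\ref{lemma_MacMahon}) into the divisibility statement of Lemma~\ref{alpha_S_zero_mod_2}. The first step is to observe that modulo $2$ the alternating signs in MacMahon's formula disappear: since $(-1)^{|S-T|} \equiv 1 \bmod 2$, we have
$$
\beta_{n}(S) \equiv \sum_{T \subseteq S} \alpha_{n}(T) \bmod 2 ,
$$
and likewise $\beta_{n}(S \cup \{i\}) \equiv \sum_{T \subseteq S \cup \{i\}} \alpha_{n}(T) \bmod 2$. This reduction is what makes the two quantities directly comparable.

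Next I would split the subsets of $S \cup \{i\}$ according to whether or not they contain the element $i$. Since $i \notin S$, the subsets $T \subseteq S \cup \{i\}$ with $i \notin T$ are exactly the subsets of $S$, so their contributions reproduce $\beta_{n}(S) \bmod 2$. Hence the difference is governed entirely by the subsets containing $i$:
$$
\beta_{n}(S \cup \{i\}) - \beta_{n}(S)
\equiv
\sum_{\substack{T \subseteq S \cup \{i\} \\ i \in T}} \alpha_{n}(T) \bmod 2 .
$$

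Finally I would invoke Lemma~\ref{alpha_S_zero_mod_2}. Every set $T$ appearing in the sum above contains the element $i$, which is nonessential by hypothesis, so each such $\alpha_{n}(T)$ is even. Therefore the entire sum vanishes modulo $2$, and we conclude $\beta_{n}(S \cup \{i\}) \equiv \beta_{n}(S) \bmod 2$, as desired. There is no genuine obstacle here: the only real content is recognizing that dropping the signs in MacMahon's formula modulo $2$ turns the comparison into a sum of $\alpha_{n}(T)$-terms all of which contain the nonessential element $i$, at which point the previous lemma finishes the argument immediately.
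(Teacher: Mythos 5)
Your proof is correct and is essentially the paper's own argument: both split the subsets of $S \cup \{i\}$ according to whether they contain $i$, identify the $i$-free part with $\beta_n(S)$ via Lemma~\ref{lemma_MacMahon}, and kill the terms containing $i$ using Lemma~\ref{alpha_S_zero_mod_2}. The only cosmetic difference is that you discard the signs $(-1)^{|S-T|}$ at the outset by working modulo $2$ throughout, whereas the paper carries them along and reduces modulo $2$ at the last step.
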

\begin{proof_special}
By Lemmas~\ref{lemma_MacMahon} and~\ref{alpha_S_zero_mod_2}, we have
\begin{eqnarray*}
\hspace*{15mm}
\beta(S\cup \{i\})
  & = &
\sum_{T\subseteq S} (-1)^{|S-T|+1} \cdot \alpha_{n}(T)
 +
\sum_{T\subseteq S} (-1)^{|S-T|} \cdot \alpha_{n}(T\cup\{i\}) \\
  & = &
-\beta_{n}(S) 
 + 
\sum_{T\subseteq S} (-1)^{|S-T|} \cdot \alpha_{n}(T\cup\{i\}) \\
  & \equiv &
\beta_{n}(S) \bmod 2.
\hspace*{15mm}
\hspace*{60mm}\qed
\end{eqnarray*}
\end{proof_special}

\begin{lemma}
Let $S = \{s_{1} < s_{2} < \cdots < s_{m-1}\}$
be a subset of $[n-1]$ consisting
of essential elements, so that there are nonempty proper subsets
$B_{1}, B_{2}, \ldots, B_{m-1}$
of $[k]$ such that $s_{r} = \sum_{i\in B_{r}} 2^{j_{i}}$.
Then $\alpha_{n}(S)$ is odd if and only if 
$B_{1} \subseteq B_{2} \subseteq \cdots \subseteq B_{m-1}$.
\label{lemma_alpha_S_odd}
\end{lemma}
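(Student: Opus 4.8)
The plan is to reduce everything to the parity criterion of Corollary~\ref{corollary_to_Kummer}. Writing the composition $D(S) = (\gamma_1, \ldots, \gamma_m)$, so that $\gamma_r = s_r - s_{r-1}$ under the conventions $s_0 = 0$ and $s_m = n$, the corollary says that $\alpha_n(S)$ is odd precisely when the parts $\gamma_1, \ldots, \gamma_m$ have pairwise disjoint binary supports. Since these parts sum to $n = 2^{j_1} + \cdots + 2^{j_k}$, disjointness of their supports is equivalent to the statement that these supports form a set partition of $\{j_1, \ldots, j_k\}$, that is, there are no carries in the base-$2$ addition $\gamma_1 + \cdots + \gamma_m = n$.

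The main point, then, is to translate this ``no carries'' condition into the nesting condition on the index sets $B_r$. Here I would adopt the conventions $B_0 = \varnothing$ and $B_m = [k]$, matching $s_0 = 0$ and $s_m = n$, so that the claim becomes $B_0 \subseteq B_1 \subseteq \cdots \subseteq B_m$. For the direction assuming the nesting, I would observe that $B_{r-1} \subseteq B_r$ forces $\gamma_r = s_r - s_{r-1} = \sum_{i \in B_r \setminus B_{r-1}} 2^{j_i}$, so the support of $\gamma_r$ is $\{j_i : i \in B_r \setminus B_{r-1}\}$. As $r$ ranges over $1, \ldots, m$ these difference sets are pairwise disjoint and partition $[k]$, hence the supports of the $\gamma_r$ are disjoint and $\alpha_n(S)$ is odd.

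For the converse I would argue from the partial sums. If the supports of the parts are pairwise disjoint, then adding $\gamma_1 + \cdots + \gamma_r$ produces no carries for every $r$, so the support of the partial sum $s_r$ is exactly the union of the supports of $\gamma_1, \ldots, \gamma_r$. Reading off indices, this says $B_r = \{i : j_i \in \operatorname{supp}(\gamma_1) \cup \cdots \cup \operatorname{supp}(\gamma_r)\}$, a set that manifestly grows with $r$; thus $B_1 \subseteq B_2 \subseteq \cdots \subseteq B_{m-1}$, as desired.

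The only place that requires care, and which I expect to be the crux, is the clean identification of the support of the partial sum $s_r$ with the union of supports in the absence of carries, together with the dual fact that nesting makes each subtraction $s_r - s_{r-1}$ borrow-free. Both are elementary facts about base-$2$ arithmetic, but they are precisely what makes the bookkeeping between ``no carries'' and the chain $B_1 \subseteq \cdots \subseteq B_{m-1}$ go through, so I would state them explicitly rather than leave them implicit.
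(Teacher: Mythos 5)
Your proposal is correct and follows essentially the same route as the paper's proof: both reduce to the parity criterion from Corollary~\ref{corollary_to_Kummer}, adopt the conventions $B_0 = \varnothing$ and $B_m = [k]$, prove the forward direction via $\gamma_r = \sum_{i \in B_r \setminus B_{r-1}} 2^{j_i}$, and recover the nesting from the fact that the binary support of the partial sum $s_r$ is the union of the supports of $\gamma_1, \ldots, \gamma_r$ when there are no carries. The only difference is expository: you propose to state explicitly the two base-$2$ bookkeeping facts that the paper leaves implicit.
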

\begin{proof}
Let $B_{0} = \varnothing$ and $B_{m} = [k]$.
If $B_{1} \subseteq B_{2} \subseteq \cdots \subseteq B_{m-1}$
then
$\gamma_{r} = s_{r} - s_{r-1} = \sum_{i\in B_{r}-B_{r-1}} 2^{j_{i}}$.
Then there is no carry in the addition
$\gamma_{1} + \cdots + \gamma_{m} = n$
and hence $\alpha_{n}(S)$ is odd.
On the other hand,
if $\alpha_{n}(S)$ is odd then there is no carries
in the addition $\gamma_{1} + \cdots + \gamma_{m} = n$,
so all the $2$-powers that appears in
$\gamma_{1}, \ldots, \gamma_{m}$ must be disjoint.
Since $s_{r}$ is given by the partial sum
$\gamma_{1} + \cdots + \gamma_{r}$,
the $2$-powers appearing in $s_{r}$ must be
contained among 
the $2$-powers appearing in $s_{r+1}$,
that is, $B_{r} \subseteq B_{r+1}$.
\end{proof}

\begin{lemma}
Let $n$ have $k$ $1$'s in its binary expansion.
Let $E = \{e_{1}, e_{2}, \ldots, e_{2^{k}-1}\}$
be the set of essential elements of
$[n-1]$, where the $e_{i}$'s are listed in increasing order:
$e_{1} < e_{2} < \cdots < e_{2^{k}-1}$.
Let $S = \{s_{i_{1}}, s_{i_{2}}, \ldots, s_{i_{m}}\}$
be a subset of $E$
and $\widehat{S}$ be the set of indices of $S$, that is,
$\widehat{S} = \{i_{1}, i_{1}, \ldots, i_{m}\}$.
Then the parity of $\beta_{n}(S)$ is the same
as the parity of $\beta_{2^{k}-1}(\widehat{S})$.
\label{lemma_beta}
\end{lemma}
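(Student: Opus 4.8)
The plan is to build a combinatorial dictionary between the essential elements of $[n-1]$ and those of $[2^k-2]$ — which, since $2^k-1 = 2^{k-1}+\cdots+2^0$ has $k$ ones, are \emph{all} of $[2^k-2]$ — and then transport the mod-$2$ computation of $\beta$ through it. Each essential element $e=\sum_{i\in B}2^{j_i}$ of $[n-1]$ is labeled by a unique nonempty proper subset $B\subseteq[k]$, and likewise each essential element of $[2^k-2]$ is labeled by such a $B$ (with $j_i$ replaced by $k-i$). The crux is to show that the increasing order of essential elements corresponds to one fixed total order $\prec$ on these labels, \emph{independent of $n$}. Given two subsets $B,C$, let $a$ be the least index in the symmetric difference $B\,\triangle\,C$; since $j_a>j_i$ for every $i>a$ we have $2^{j_a}>\sum_{i>a}2^{j_i}$, so the sign of $\sum_{i\in B}2^{j_i}-\sum_{i\in C}2^{j_i}$ depends only on whether $a\in B$ or $a\in C$. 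This comparison rule refers to the index set $[k]$ alone and not to the gaps $j_1>\cdots>j_k$, so the induced order $\prec$ is the same for $n$ and for $2^k-1$; I would also note that $\prec$ extends inclusion, since $B\subsetneq C$ forces $B\prec C$. Consequently, matching the $r$-th smallest essential element of $[n-1]$ with the $r$-th smallest essential element of $[2^k-2]$ matches elements carrying the \emph{same} label. Writing $S=\{s_{i_1}<\cdots<s_{i_m}\}$ with labels $B_1,\ldots,B_m$, the index set $\widehat{S}=\{i_1<\cdots<i_m\}\subseteq[2^k-2]$ therefore consists of essential elements of $[2^k-2]$ carrying exactly $B_1,\ldots,B_m$ in the same order.

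Next I would reduce both parities to a single chain count. By Lemma~\ref{lemma_MacMahon}, modulo $2$ the signs disappear, so $\beta_n(S)\equiv\sum_{T\subseteq S}\alpha_n(T)\bmod 2$, and hence $\beta_n(S)$ has the parity of $|\{T\subseteq S:\alpha_n(T)\text{ odd}\}|$. Since $S$ consists of essential elements, so does every $T\subseteq S$, and Lemma~\ref{lemma_alpha_S_odd} says $\alpha_n(T)$ is odd exactly when the labels of the elements of $T$ form a chain under inclusion. Writing $T=\{s_{i_r}:r\in J\}$ for $J\subseteq[m]$, this reads ``$\{B_r:r\in J\}$ is a chain,'' a purely combinatorial condition on the labels. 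The identical argument applied to $\beta_{2^k-1}(\widehat{S})$ shows its parity equals $|\{J\subseteq[m]:\{B_r:r\in J\}\text{ is a chain}\}|$ as well, because $\widehat{S}$ carries the same labels $B_1,\ldots,B_m$. The two counts coincide, so $\beta_n(S)\equiv\beta_{2^k-1}(\widehat{S})\bmod 2$, as claimed.

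I expect the main obstacle to be the first step: proving that the increasing order of essential elements is governed by a label order $\prec$ that depends only on $k$ and not on the magnitudes $2^{j_1},\ldots,2^{j_k}$. Once this order-isomorphism is in place, the rest is a mechanical transport of Lemmas~\ref{lemma_MacMahon} and~\ref{lemma_alpha_S_odd}; the essential point is that both the chain criterion of Lemma~\ref{lemma_alpha_S_odd} and the resulting mod-$2$ inclusion–exclusion see only the number $k$ and the pattern of selected labels, never the actual binary positions $j_1>\cdots>j_k$.
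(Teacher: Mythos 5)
Your proof is correct and takes essentially the same approach as the paper: reduce both parities through MacMahon's formula (Lemma~\ref{lemma_MacMahon}) taken modulo $2$, then apply the chain criterion of Lemma~\ref{lemma_alpha_S_odd} to the labels $B_1,\ldots,B_m$. The only addition is your explicit verification that the increasing order on essential elements induces a label order independent of the exponents $j_1>\cdots>j_k$ (so that $\widehat{S}$ carries the same labels as $S$), a step the paper's one-line proof leaves implicit.
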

\begin{proof_special}
{}From Lemma~\ref{lemma_alpha_S_odd}
it follows that the parity of $\alpha_{n}(S)$ is the same
as the parity of $\alpha_{2^{k}-1}(\widehat{S})$.
Now the result follows by
$$                   \hspace*{10 mm}
      \beta_{n}(S) 
  =
      \sum_{T \subseteq S} (-1)^{|S-T|} \cdot \alpha_{n}(T)
  \equiv
      \sum_{\widehat{T} \subseteq \widehat{S}}
         (-1)^{|\widehat{S}-\widehat{T}|} \cdot \alpha_{2^{k}-1}(\widehat{T})
 =    \beta_{2^{k}-1}(\widehat{S})
         \bmod 2 . 
\hspace*{10 mm} \qed $$
\end{proof_special}

We are now ready to prove Theorem~\ref{theorem_main}.

\begin{proof_}{Proof of Theorem~\ref{theorem_main}:}
It follows from Lemma~\ref{lemma_nonessential_element}
that the proportion of odd entries
among $\beta_{n}(S)$ for $S\subseteq [n-1]$ is the same as the proportion
of odd entries among $\beta_{n}(S)$ for $S\subseteq E$. But by
Lemma~\ref{lemma_beta}, the proportion of odd entries
among $\beta_{n}(S)$ for $S\subseteq E$ depends on $k$, the number of $1$'s
in the binary expansion of $n$.
\end{proof_}

\section{Quasisymmetric functions and posets}
\label{section_QSym}

In this section we relate the preceding result to the theory of
quasisymmetric functions.

Consider the ring $\Zzz[[w_{1},w_{2},\ldots]]$ of
power series with bounded degree.
A function~$f$ in this ring is called {\em quasisymmetric} if for any sequence
of positive integers $\gamma_{1},\gamma_{2},\ldots,\gamma_{m}$ we have
$$
\left[w_{i_1}^{\gamma_{1}}\cdots w_{i_k}^{\gamma_{m}} \right]f =
\left[w_{j_1}^{\gamma_{1}}\cdots w_{j_k}^{\gamma_{m}} \right]f
$$
whenever $i_{1} < \cdots < i_{m}$ and $j_{1} < \cdots < j_{m}$,
and where
$[w^{\gamma}]f$ denotes the coefficient of~$w^{\gamma}$ in $f$. Denote
by $\QSym \subseteq  \Zzz[[w_{1},w_{2},\ldots]]$
the ring of quasisymmetric
functions.

For a composition $\gamma = (\gamma_{1},\gamma_{2},\ldots,\gamma_{m})$
the {\em monomial quasisymmetric function}
$M_{\gamma}$ is given by
$$
M_{\gamma}
 =
\sum_{i_{1} < \cdots < i_{m}}
  w_{i_{1}}^{\gamma_{1}} \cdots w_{i_{m}}^{\gamma_{m}}.
$$

\begin{definition}
Let $P$ be a graded poset with rank function $\rho$.
Define the quasisymmetric function $F(P)$ of the poset $P$
by
$$    F(P)
  =
    \sum_{c} M_{\rho(c)}  ,
$$
where the sum ranges over all chains
$c = \{\hz = x_{0} < x_{1} < \cdots < x_{m} = \ho\}$
in~$P$,
$\rho(c)$ denotes the composition
$(\rho(x_{0},x_{1}), \rho(x_{1},x_{2}), \ldots, \rho(x_{m-1},x_{m}))$,
and $\rho(x,y)$
denotes the rank difference
$\rho(x,y) = \rho(y) - \rho(x)$.
\end{definition}

The quasisymmetric function of a poset is multiplicative,
that is,
for two graded posets $P$ and $Q$ the quasisymmetric
function of their Cartesian product is given by
the product of the respective quasisymmetric functions
$$   F(P \times Q) = F(P) \cdot F(Q)  ;  $$
see Proposition~4.4 in~\cite{Ehrenborg}.
Recall that the Boolean algebra $B_{n}$ is the Cartesian power of the
chain of two elements $B_{1}$, that is, $B_{n} = B_{1}^{n}$
and
$F(B_{1}) = w_{1} + w_{2} + \cdots$.
Hence we have that
$F(B_{n}) = F(B_{1})^{n} 
  = M_{(1)}^{n} = (w_{1} + w_{2} + \cdots)^{n}$.


Let $P$ be a graded poset of rank $n$.
For a subset
$S = \{s_{1} < s_{2} < \cdots < s_{m-1}\}$
of $[n-1]$, define the {\em flag $f$-vector}
entry $f_{S}$
to be the number of chains
$\{\hz = x_{0} < x_{1} < \cdots < x_{m} = \ho\}$
such that
$\rho(x_{i}) = s_{i}$ for $1 \leq i \leq m-1$.
The {\em flag $h$-vector} is defined by the invertible
relation
$$     h_{S}  =  \sum_{T \subseteq S} (-1)^{|S-T|} \cdot f_{T}  . $$

Recall the bijection $D$ between subsets of $[n-1]$
and compositions of $n$ defined in
Section~\ref{section_proportion}.
By abuse of notation
we will write $M_{S}$ instead of $M_{D(S)}$,
where the degree of the quasisymmetric function
is understood.
Then Lemma~4.2 in~\cite{Ehrenborg}
states that the quasisymmetric function of a poset
encodes the flag $f$-vector, that is,
$$
F(P) = \sum_{S \subseteq [n-1]} f_{S} \cdot M_{S}.
$$
The {\em fundamental quasisymmetric function} $L_{T}$ is given
by
$$
L_{T} = \sum_{T \subseteq S} M_{S} .
$$
Note that one can write $F(P)$
in terms of fundamental quasisymmetric functions:
$$
 F(P)
  =
 \sum_{S \subseteq [n-1]} h_{S} \cdot L_{S} .
$$

If the poset $P$ is a Boolean algebra $B_{n}$
then $h_{S} = \beta_{n}(S)$.
This is straightforward to observe using the classical
$R$-labeling of the Boolean algebra~\cite[Section~3.13]{Stanley}
or by direct enumeration~\cite[Corollary~3.12.2]{Stanley}.
The multiplicative property $B_{\ell+m} \cong B_\ell \times B_m$ allows
us to compute $F(B_n)$ modulo 2:

\begin{lemma}
\label{lemma_basic}
For $m = 2^{j}$ we have $F(B_m) \equiv M_{(m)} \bmod 2$.
Consequently,
for $n = 2^{j_1} + \cdots + 2^{j_k}$
with
$j_1 > \cdots > j_k \geq 0$ we have
$F(B_{n}) \equiv \prod_{i=1}^{k} M_{(2^{j_i})} \bmod 2$.
\end{lemma}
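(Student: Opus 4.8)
The plan is to read off $F(B_n) \bmod 2$ from its monomial expansion. By the cited Lemma~4.2 of~\cite{Ehrenborg} we have $F(P) = \sum_{S\subseteq[n-1]} f_{S}\cdot M_{S}$, and for the Boolean algebra the flag $f$-vector entry $f_{S}$ counts the chains $\varnothing = A_{0} \subset A_{1} \subset \cdots \subset A_{m} = [n]$ with $|A_{r}| = s_{r}$; choosing which elements enter at each step shows that this count is exactly the multinomial coefficient $\binomial{n}{D(S)} = \alpha_{n}(S)$. Hence $F(B_{n}) = \sum_{S\subseteq[n-1]} \alpha_{n}(S)\cdot M_{S}$, and modulo $2$ only the terms with $\alpha_{n}(S)$ odd survive.

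First I would prove the claim for $m = 2^{j}$. By Corollary~\ref{corollary_to_Kummer}, $\alpha_{m}(S)$ is odd exactly when the parts of the composition $D(S)$ have pairwise disjoint binary supports. As these parts sum to $m = 2^{j}$, whose binary expansion consists of the single power $2^{j}$, their disjoint supports would have to distribute the one bit in position $j$ among several positive parts, which is impossible once there are two or more parts. Thus the only admissible subset is $S = \varnothing$, with $D(\varnothing) = (m)$, and therefore $F(B_{m}) \equiv M_{(m)} \bmod 2$.

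For the general case $n = 2^{j_{1}} + \cdots + 2^{j_{k}}$ I would invoke the iterated isomorphism $B_{n} \cong B_{2^{j_{1}}} \times \cdots \times B_{2^{j_{k}}}$, which follows from $B_{\ell+m} \cong B_{\ell}\times B_{m}$, together with the multiplicativity $F(P\times Q) = F(P)\cdot F(Q)$. This gives $F(B_{n}) = \prod_{i=1}^{k} F(B_{2^{j_{i}}})$ in $\Zzz[[w_{1},w_{2},\ldots]]$. Since reduction of coefficients modulo $2$ is a ring homomorphism, it commutes with this product, and applying the first part to each factor yields $F(B_{n}) \equiv \prod_{i=1}^{k} M_{(2^{j_{i}})} \bmod 2$.

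The main obstacle is the first part: identifying $f_{S}$ with $\alpha_{n}(S)$ and then ruling out every nontrivial composition of $2^{j}$ via the carry criterion. Once that parity analysis is settled, the passage to general $n$ is a formal consequence of multiplicativity and of the fact that mod-$2$ reduction respects products, so no further work is needed there.
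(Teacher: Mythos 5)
Your proof is correct, but your base case takes a genuinely different route from the paper's. The paper disposes of $m = 2^{j}$ in one line: since $F(B_{2^{j}}) = (w_{1} + w_{2} + \cdots)^{2^{j}}$ by multiplicativity, the Frobenius congruence $(a+b)^{2^{j}} \equiv a^{2^{j}} + b^{2^{j}} \bmod 2$ immediately gives $F(B_{2^{j}}) \equiv w_{1}^{2^{j}} + w_{2}^{2^{j}} + \cdots = M_{(2^{j})} \bmod 2$, with no mention of flag $f$-vectors or of Kummer's theorem. You instead expand $F(B_{m}) = \sum_{S} \alpha_{m}(S) \cdot M_{S}$ --- an identity that is indeed valid (chains with rank set $S$ correspond to ordered set partitions with block sizes given by $D(S)$, and the paper itself uses exactly this expansion later, in the proof of Proposition~\ref{proposition_r_p_t}) --- and then kill every nontrivial composition of $2^{j}$ by the carry criterion of Corollary~\ref{corollary_to_Kummer}; your disjoint-supports argument there is sound, since each positive part occupies at least one binary digit and $2^{j}$ has only one. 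The two arguments are the same fact in different packaging: the freshman's dream is precisely the statement that $\binomial{2^{j}}{\gamma}$ is even whenever $\gamma$ has more than one part. The paper's version is shorter and purely algebraic; yours has the merit of tying the lemma explicitly to the Kummer machinery of Section~\ref{section_proportion} that drives the rest of the paper. Your handling of the general case --- $B_{n} \cong B_{2^{j_{1}}} \times \cdots \times B_{2^{j_{k}}}$, multiplicativity of $F$, and the observation that coefficientwise reduction mod $2$ is a ring homomorphism --- coincides with the paper's, which simply declares the second statement a consequence of the first.
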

\begin{proof_special}
It is enough to prove the first statement. Recall
the congruence
$(a + b)^{2^{j}} \equiv a^{2^{j}} + b^{2^{j}} \bmod 2$.
Now we have
$$ \hspace*{20mm}
          F(B_{2^j}) 
   =      (w_{1} + w_{2} + \cdots)^{2^{j}}
   \equiv w_{1}^{2^{j}} + w_{2}^{2^{j}} + \cdots
   =      M_{(2^{j})}  \bmod 2 . 
   \hspace*{20mm} \qed $$
\end{proof_special}

An {\em ordered partition} $\pi$
of a set $[k]$
is a list of non-empty
pairwise disjoint sets
$(B_{1}, B_{2}, \ldots, B_{j})$
such that their union is $[k]$.

\begin{theorem}
For positive integers $m_{1}, m_{2}, \ldots, m_{k}$,
the product of the quasisymmetric functions
$M_{(m_{1})} \cdot M_{(m_{2})} \cdots M_{(m_{k})}$
is given by
$$ M_{(m_{1})} \cdot M_{(m_{2})} \cdots M_{(m_{k})}
  =
\sum_{\pi}
   M_{S(\pi)}  $$
where 
$$
S(\pi) = D^{-1}\left(\left(\sum_{i \in B_{1}} m_{i},
                \sum_{i \in B_{2}} m_{i},
                     \ldots,
                \sum_{i \in B_{j}} m_{i}\right)\right)  ,
$$
and $\pi = (B_{1}, B_{2}, \ldots, B_{j})$
ranges over all ordered partitions of the
set $[k]$.
\label{theorem_product}
\end{theorem}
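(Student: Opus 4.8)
The plan is to expand the product directly into monomials and then regroup the terms. First I would write each factor using its definition, $M_{(m_i)} = \sum_{a_i} w_{a_i}^{m_i}$, where $a_i$ ranges over all positive integers. Multiplying the $k$ factors and using distributivity gives
$$
M_{(m_1)} \cdot M_{(m_2)} \cdots M_{(m_k)}
  =
\sum_{(a_1, \ldots, a_k)} w_{a_1}^{m_1} w_{a_2}^{m_2} \cdots w_{a_k}^{m_k},
$$
where the outer sum ranges over all $k$-tuples of positive integers. Whenever two indices $a_i$ coincide, the corresponding exponents combine additively, so the monomials are not yet in reduced form; the whole content of the theorem is to describe how they reduce.

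Next I would organize the tuples according to the pattern of coincidences among the values $a_i$. Each tuple $(a_1, \ldots, a_k)$ determines an ordered partition $\pi = (B_1, \ldots, B_j)$ of $[k]$: letting $v_1 < v_2 < \cdots < v_j$ be the distinct values occurring among $a_1, \ldots, a_k$, set $B_\ell = \{i : a_i = v_\ell\}$. The blocks are nonempty by construction and are ordered by the increasing values $v_\ell$, so $\pi$ is a genuine ordered partition, and the associated monomial factors as
$$
w_{a_1}^{m_1} \cdots w_{a_k}^{m_k}
  =
\prod_{\ell=1}^{j} w_{v_\ell}^{\sum_{i \in B_\ell} m_i}.
$$
The key step is to observe that this assignment is a bijection: a tuple is recovered uniquely from the pair consisting of an ordered partition $\pi$ and a strictly increasing sequence $v_1 < \cdots < v_j$ (namely $a_i = v_\ell$ for $i \in B_\ell$). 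Therefore I may sum first over the increasing sequences with $\pi$ fixed and then over $\pi$. Fixing $\pi$ and summing over all $v_1 < \cdots < v_j$ gives, by the very definition of the monomial quasisymmetric function,
$$
\sum_{v_1 < \cdots < v_j}
   w_{v_1}^{\sum_{i\in B_1} m_i} \cdots w_{v_j}^{\sum_{i\in B_j} m_i}
  =
M_{(\sum_{i\in B_1} m_i, \, \ldots, \, \sum_{i\in B_j} m_i)}.
$$

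Finally I would identify the composition $(\sum_{i\in B_1} m_i, \ldots, \sum_{i\in B_j} m_i)$, which is a composition of $n = m_1 + \cdots + m_k$, with $D(S(\pi))$, so that the right-hand side above is exactly $M_{S(\pi)}$ under the stated abuse of notation. Summing over all ordered partitions $\pi$ of $[k]$ then yields the claimed formula. I do not expect a substantive obstacle here: the argument is purely a bookkeeping of how the monomials of the product combine. The only point requiring care is the verification that every $k$-tuple arises from exactly one pair $(\pi, (v_1 < \cdots < v_j))$, which is what guarantees that each reduced monomial is counted with coefficient one and that no terms are lost or double-counted.
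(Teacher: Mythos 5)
Your proof is correct, and it takes a genuinely different route from the paper's. The paper disposes of this theorem in one line: it asserts that the formula follows by iterating Lemma~3.3 of~\cite{Ehrenborg}, which is the two-factor (quasi-shuffle) product formula for monomial quasisymmetric functions, so the $k$-factor statement is obtained by induction on $k$. Your argument is instead a direct, self-contained expansion from the definition: you write each $M_{(m_i)}$ as $\sum_{a} w_{a}^{m_i}$, classify the resulting $k$-tuples of indices by their coincidence pattern, and exhibit the bijection between tuples $(a_1,\ldots,a_k)$ and pairs consisting of an ordered partition $\pi=(B_1,\ldots,B_j)$ of $[k]$ together with a strictly increasing sequence $v_1<\cdots<v_j$ of values. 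This buys self-containedness (no external reference, no induction) and makes completely transparent why each $M_{S(\pi)}$ occurs with coefficient exactly one, including the case where distinct ordered partitions yield the same composition (e.g.\ when some $m_i$ coincide). The paper's route is shorter on the page and situates the result within the general quasi-shuffle machinery for products $M_\alpha \cdot M_\beta$ of arbitrary monomial quasisymmetric functions, but it asks the reader to chase the citation and to verify that iterating the pairwise formula collapses to the ordered-partition indexing claimed here; your bookkeeping does that work explicitly in the special case at hand. The one step worth stating carefully, as you do, is the bijectivity of the correspondence between tuples and pairs $(\pi,(v_1<\cdots<v_j))$, since that is precisely what rules out lost or double-counted terms.
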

\begin{proof}
The theorem can be proved by iterating Lemma~3.3
in~\cite{Ehrenborg}.
\end{proof}

In view of Lemma~\ref{lemma_basic},
the following is a restatement of Theorem~\ref{theorem_main} in the language
of quasisymmetric functions:

\begin{theorem}
The proportion of odd coefficients in the quasisymmetric function
$f = M_{(2^{j_1})} \cdot M_{(2^{j_2})} \cdots M_{(2^{j_k})}$
when expressed in the $L$-basis
only depends on $k$.
\label{theorem_knapsack}
\end{theorem}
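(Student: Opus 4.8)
The plan is to reduce this statement directly to Theorem~\ref{theorem_main}, viewing the odd $L$-coefficients of $f$ as the odd descent statistics $\beta_{n}(S)$. The key observation is that passage to the fundamental basis turns the product $f$ into the list of flag $h$-vector entries of a Boolean algebra, and those entries are precisely the numbers $\beta_{n}(S)$.

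First I would record the two facts already assembled above. For the Boolean algebra $B_{n}$ the flag $h$-vector coincides with the descent statistic, $h_{S} = \beta_{n}(S)$, so that $F(B_{n}) = \sum_{S \subseteq [n-1]} \beta_{n}(S) \cdot L_{S}$. And by Lemma~\ref{lemma_basic}, writing $n = 2^{j_{1}} + \cdots + 2^{j_{k}}$, we have $F(B_{n}) \equiv f \bmod 2$. Note also that $f$ is homogeneous of degree $n$, so that its expansion in the fundamental basis runs over the same index set $\{S : S \subseteq [n-1]\}$, of which there are $2^{n-1}$ members.

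Next I would expand $f = \sum_{S \subseteq [n-1]} c_{S} \cdot L_{S}$ with $c_{S} \in \Zzz$ and transfer the congruence to the coefficients. Since the degree-$n$ component of $\QSym$ is a free $\Zzz$-module with basis $\{L_{S}\}$, reduction modulo $2$ acts coefficientwise; hence $F(B_{n}) \equiv f \bmod 2$ forces $c_{S} \equiv \beta_{n}(S) \bmod 2$ for every subset $S$. Consequently the number of odd coefficients $c_{S}$ equals $|\{S \subseteq [n-1] : \beta_{n}(S) \equiv 1 \bmod 2\}|$, and dividing by $2^{n-1}$ gives exactly the proportion $\rho(n)$. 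Theorem~\ref{theorem_main} then shows this quantity depends only on $k$, which is the assertion.

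The only genuinely delicate point --- more a matter of care than of difficulty --- is the claim that a congruence of quasisymmetric functions modulo $2$ may be read off coefficientwise in the $L$-basis. This is precisely the statement that $\{L_{S}\}$ is a $\Zzz$-basis, the same fact that underlies the invertible relation between the flag $f$- and $h$-vectors. Once that is granted, no computation remains and the result is immediate; indeed the theorem is in essence just a transcription of Theorem~\ref{theorem_main} into the language of the fundamental basis.
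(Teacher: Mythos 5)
Your proof is correct, but it takes a genuinely different route from the paper's. You reduce the theorem to Theorem~\ref{theorem_main}: using $h_{S} = \beta_{n}(S)$ for the Boolean algebra (so $F(B_{n}) = \sum_{S} \beta_{n}(S) \cdot L_{S}$), Lemma~\ref{lemma_basic} (so $F(B_{n}) \equiv f \bmod 2$), and the fact that the $L_{S}$ form a $\Zzz$-basis of the degree-$n$ component of $\QSym$ (the $M$-to-$L$ change of basis is unitriangular), you conclude that the $L$-coefficients of $f$ satisfy $c_{S} \equiv \beta_{n}(S) \bmod 2$, so the proportion in question is exactly $\rho(n)$ and Theorem~\ref{theorem_main} finishes; there is no circularity, since Theorem~\ref{theorem_main} is proved in Section~\ref{section_proportion} without quasisymmetric functions. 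In effect you have made rigorous the paper's own prefatory remark that this theorem ``is a restatement of Theorem~\ref{theorem_main},'' and the delicate point you single out (unitriangularity, hence coefficientwise reduction mod $2$) is precisely the right bridging fact. The paper's actual proof goes the other way and is self-contained: it expands $f = \sum_{\pi \in \Pi_{k}} M_{S(\pi)}$ over ordered partitions of $[k]$ via Theorem~\ref{theorem_product}, converts to the $L$-basis to obtain the coefficient of $L_{T}$ as $\sum_{\pi \,:\, S(\pi) \subseteq T} (-1)^{|T - S(\pi)|}$, pairs $T$ with $T \cup \{i\}$ for a nonessential $i \notin T$ (every $S(\pi)$ consists of essential elements, so these two coefficients differ only in sign and share parity), and then observes that for $T$ consisting solely of essential elements the coefficient formula involves only the poset $\Pi_{k}$, hence only $k$. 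What the paper's route buys is an independent argument (which, combined with your bridging steps, actually re-proves Theorem~\ref{theorem_main}) together with explicit coefficient information that is reused later: the proofs in Section~\ref{section_quadratic} cite ``the proof of Theorem~\ref{theorem_knapsack}'' for concrete mod~$2$ formulas for $\beta_{n}(S)$, which your argument does not produce. What your route buys is economy: no new computation at all, just the observation that everything needed is already on the table.
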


\begin{proof}
There is a natural partially ordered set $\Pi_k$
on ordered partitions of $[k]$
with the cover relation $\pi \prec \sigma$
whenever $\sigma$ is obtained from $\pi$ by merging two
adjacent blocks. Expressing the quasisymmetric function $f$
in terms of the $L$-basis, we get
\begin{eqnarray*}
f
 & = &
\sum_{\pi\in\Pi_k} M_{S(\pi)} \\
 & = &
\sum_{\pi\in\Pi_k} \sum_{T \supseteq S(\pi)} (-1)^{|T-S(\pi)|} \cdot L_{T}  ,
\end{eqnarray*}
where $S(\pi)$ is defined as in Theorem~\ref{theorem_product}
for $m_i = 2^{j_i}$.
Thus the coefficient of $L_T$ is given by the sum
$$
\sum_{\pi\in\Pi_k\ :\ S(\pi)\subseteq T} (-1)^{|T-S(\pi)|}.
$$
As in the proof of Theorem~\ref{theorem_main},
considering pairs of sets $T$ and $T\cup \{i\}$, where $i\not\in T$ is
a nonessential element, we conclude that
the proportion of odd coefficients in the expansion of $f$ in the $L$-basis
only depends on sets $T$ consisting solely of essential elements.
The coefficients corresponding to such $T$ depend only on the poset
$\Pi_k$, that is, only on $k$, because of the above expression for
the coefficient of $L_T$.
\end{proof}

Theorem~\ref{theorem_main} implies that for $n = 2^{j}$
all the entries in the descent set statistics are odd.
Hence it is interesting to look at this data modulo $4$.
\begin{theorem}
For $n = 2^{j} \geq 4$ exactly half
of the descent set statistics
are congruent to $1$ modulo $4$,
and the other half
are congruent to $3$ modulo~$4$.
\label{theorem_2_power}
\end{theorem}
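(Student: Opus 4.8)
The plan is to refine Lemma~\ref{lemma_basic} from a congruence modulo~$2$ to one modulo~$4$ and then to read off the $\beta_{n}(S)$ as the coefficients of $F(B_{n})$ in the fundamental basis. Write $W = w_{1} + w_{2} + \cdots$, so that $F(B_{n}) = W^{n}$, and recall that for the Boolean algebra $h_{S} = \beta_{n}(S)$, whence $F(B_{n}) = \sum_{S} \beta_{n}(S) \cdot L_{S}$. Since the transition matrix between the bases $(M_{S})$ and $(L_{S})$ is unitriangular with entries in $\{0,\pm 1\}$, it remains invertible modulo~$4$; thus it suffices to compute $F(B_{n})$ modulo~$4$ and rewrite the result in the $L$-basis, the coefficient of $L_{S}$ being $\beta_{n}(S) \bmod 4$.

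The crux is the mod-$4$ squaring step. Let $n = 2^{j}$ with $j \geq 2$ and put $p_{r} = \sum_{i} w_{i}^{r} = M_{(r)}$. By Lemma~\ref{lemma_basic} applied to $n/2 = 2^{j-1}$ we have $W^{n/2} \equiv p_{n/2} \bmod 2$, so $W^{n/2} = p_{n/2} + 2g$ for some integral quasisymmetric function $g$. Squaring kills the cross terms carrying a factor of~$4$:
$$ W^{n} = (p_{n/2} + 2g)^{2} \equiv p_{n/2}^{2} = p_{n} + 2 \sum_{i<l} w_{i}^{n/2} w_{l}^{n/2} \bmod 4 . $$
Recognizing the two monomial quasisymmetric functions gives the key congruence $F(B_{n}) \equiv M_{(n)} + 2 \cdot M_{(n/2,\,n/2)} \bmod 4$.

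Next I would pass to the $L$-basis via $M_{S} = \sum_{T \supseteq S} (-1)^{|T-S|} L_{T}$. Under the identification $M_{S} = M_{D(S)}$ we have $M_{(n)} = M_{\varnothing}$ and $M_{(n/2,n/2)} = M_{\{n/2\}}$, so the coefficient of $L_{S}$ on the right-hand side is $(-1)^{|S|} + 2 \cdot [\,n/2 \in S\,] \cdot (-1)^{|S|-1}$, where $[\,\cdot\,]$ denotes the Iverson bracket. Matching against $\beta_{n}(S)$ yields
$$ \beta_{n}(S) \equiv (-1)^{\,|S| + [\,n/2\in S\,]} \bmod 4 , $$
so each $\beta_{n}(S)$ is congruent to $1$ or~$3$ modulo~$4$, and it is $\equiv 1$ exactly when $|S \setminus \{n/2\}|$ is even.

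Finally I would count by an involution. Because $n \geq 4$ the value $n/2 \geq 2$, so $1$ is an element of $[n-1]$ distinct from $n/2$; the map $S \mapsto S \triangle \{1\}$ then toggles the parity of $|S \setminus \{n/2\}|$ and hence pairs each subset with $\beta_{n}(S) \equiv 1$ with one having $\beta_{n}(S) \equiv 3$. Therefore exactly half of the $2^{n-1}$ subsets satisfy each congruence. The step requiring the most care is the squaring argument, controlling the error term $2g$ so that only the cross terms of $p_{n/2}^{2}$ survive modulo~$4$; it is also worth noting that the hypothesis $n \geq 4$ is precisely what makes $1$ and $n/2$ distinct, and that the statement indeed fails at $n = 2$, where all $\beta_{2}(S)$ equal~$1$.
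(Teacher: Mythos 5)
Your proof is correct and follows essentially the same route as the paper: establish $F(B_{n}) \equiv M_{(n)} + 2\, M_{(n/2,n/2)} \bmod 4$, expand in the $L$-basis to get $\beta_{n}(S) \equiv (-1)^{|S \setminus \{n/2\}|} \bmod 4$, and conclude with the involution $S \mapsto S \,\triangle\, \{1\}$. The only cosmetic difference is that you derive the mod-$4$ congruence by squaring the mod-$2$ statement of Lemma~\ref{lemma_basic} once, controlling the error term $2g$, whereas the paper runs an induction on $j$ using $(a+2b)^{2} \equiv a^{2} \bmod 4$ --- the same squaring idea.
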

\begin{proof}
First
we claim that $F(B_{n}) \equiv M_{(n)} + 2 M_{(n/2,n/2)} \bmod 4$.
This identity follows from the observation
$(a + 2 b)^{2} \equiv a^{2} \bmod 4$
and by induction on $j$, where the induction step is
$$
F(B_{2 n})  \equiv  (M_{(n)} + 2 M_{(n/2,n/2)})^{2}
          \equiv  M_{(n)}^{2}
          \equiv  M_{(2 n)} + 2 M_{(n,n)} \bmod 4.
$$
We have the expansion
\begin{eqnarray*}
M_{(n)} + 2 M_{(n/2,n/2)}
 & = &
M_{\varnothing} + 2 M_{\{n/2\}} \\
 & = &
\sum_{S} (-1)^{|S|} \cdot L_{S}
 -
2 \cdot
\sum_{n/2 \in S} (-1)^{|S|} \cdot L_{S}  \\
 & = &
\sum_{n/2 \not\in S} (-1)^{|S|} \cdot L_{S}
 -
\sum_{n/2 \in S} (-1)^{|S|} \cdot L_{S}  .
\end{eqnarray*}
Hence the descent set statistics modulo $4$ are given by
$\beta_{n}(S) \equiv (-1)^{|S - \{n/2\}|} \bmod 4$.
Thus for $1 \not\in S$
the values
of $\beta_{n}(S)$ and $\beta_{n}(S \cup \{1\})$
have the opposite sign modulo~$4$, proving the result.
\end{proof}

\section{The signed descent set statistics}
\label{section_signed}

A {\em signed permutation} of size $n$ is
of the form $\pi = \pi_{1} \cdots \pi_{n}$
where each $\pi_{i}$ belongs to the set $\{\pm 1, \ldots, \pm n\}$
and $|\pi_{1}| \cdots |\pi_{n}|$ is a permutation.
Let ${\mathfrak S}^{\pm}_{n}$ be the set
of signed permutations of size $n$.
For ease of notation put $\pi_{0} = 0$.
The descent set of a signed permutation $\pi$
is a subset of $[n]$
defined as $\{i \: : \: \pi_{i-1} > \pi_{i}\}$.
For $S \subseteq [n]$
let~$\beta_{n}^{\pm}(S)$ denote the number of permutations
in~${\mathfrak S}^{\pm}_{n}$ with descent set $S$.

An equivalent way to using quasisymmetric functions to encode the flag
$f$-vector data of a poset is via the $\ab$-index.
Let $\av$ and $\bv$ be two non-commutative variables.
For $S \subseteq [n-1]$
let $u_{S}$ be the monomial $u_{1} u_{2} \cdots u_{n-1}$
where $u_{i} = \av$ if $i \not\in S$
and $u_{i} = \bv$ if $i \in S$.
The {\em $\ab$-index} of a poset~$P$ of rank $n$
is defined as the sum
$$    \Psi(P) = \sum_{S} h_{S} \cdot u_{S}   .  $$
When the poset $P$ is Eulerian
then its $\ab$-index can be written in
terms of $\cv = \av + \bv$
and $\dv = \av\bv + \bv\av$.
This more compact form
removes all linear redundancies
among the flag vector entries~\cite{Bayer_Klapper}.
The linear relations satisfied by  the flag $f$-vectors
of Eulerian posets are known as the
generalized Dehn-Sommerville relations~\cite{Bayer_Billera}.
Similarly to quasisymmetric functions, the
$\ab$-index and $\cd$-index also have an underlying
coalgebra structure.
For more details, see~\cite{Ehrenborg_Readdy}.

The poset associated to signed permutations is
the cubical lattice $C_{n}$,
that is, the face lattice of an $n$-dimensional cube.
Observe that $C_{n}$ has rank $n+1$.
We have
$$  \Psi(C_{n})
    =
 \sum_{S\subseteq [n]} \beta_{n}^{\pm}(S) \cdot u_{S} .  $$

A more general setting for the $\cd$-index of the cube is that of
zonotopes. Recall that a {\em zonotope} is a Minkowski sum of line segments.
Associated to every zonotope~$Z$ there is a central hyperplane
arrangement ${\mathcal H}$. Let $L$ be the intersection lattice of the
arrangement ${\mathcal H}$. A result by Billera, Ehrenborg,
and Readdy~\cite{Billera_Ehrenborg_Readdy_om}
shows how to compute the $\cd$-index of the zonotope from the $\ab$-index of
the intersection lattice $L$.
First, introduce 
the linear map 
$\omega$ from $\zab$ to $\zcd$ defined
on an $\ab$-monomial as follows.
Replace each occurrence of $\av \bv$ by $2 \dv$
and then replace the remaining letters by $\cv$.
The main result in~\cite{Billera_Ehrenborg_Readdy_om}
states that the $\cd$-index of the zonotope $Z$ is given by
\begin{equation}
 \Psi(Z)
     =
 \omega( \av \cdot \Psi(L) )   .
\label{equation_zonotope}
\end{equation}
In particular, for the cubical lattice we have
\begin{equation}
 \Psi(C_{n})
     =
 \omega( \av \cdot \Psi(B_{n}) )   ,
\label{equation_cubical}
\end{equation}
since the associated hyperplane arrangement is the coordinate
arrangement and its intersection lattice is the Boolean algebra.

Considering
Equation~(\ref{equation_zonotope})
modulo $2$, we observe
that $\Psi(Z) \equiv \cv^{n} \bmod 2$ and hence
we obtain the following result.
\begin{lemma}
All the entries of the flag $h$-vector
of a zonotope are odd.
In particular,
all the signed descent set statistics are odd.
\label{lemma_zonotope_odd}
\end{lemma}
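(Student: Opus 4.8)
The plan is to carry out in detail the reduction modulo $2$ of Equation~(\ref{equation_zonotope}) sketched just before the statement. Recall that $\omega \colon \zab \to \zcd$ replaces each occurrence of the factor $\av\bv$ by $2\dv$ and then turns every remaining letter into $\cv$; hence on a single $\ab$-monomial $w$ the image $\omega(w)$ equals $2^{j}$ times a single $\cd$-monomial, where $j \geq 1$ precisely when $w$ contains $\av\bv$ as a factor. The first step I would record is the resulting congruence: modulo $2$ we have $\omega(w) \equiv \cv^{|w|}$ when $w$ has no factor $\av\bv$, and $\omega(w) \equiv 0$ otherwise, since every $2\dv$ contribution vanishes. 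This observation is the engine of the whole argument, and I expect the only mildly delicate point to be stating the parsing of $\omega$ cleanly enough that the assertion ``$\omega(w)$ is $2^{j}$ times one monomial'' is unambiguous.

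Next I would apply $\omega$ to $\av \cdot \Psi(L) = \sum_{S} h_{S}^{L} \cdot \av u_{S}$ term by term, using linearity. The crucial point is that prepending $\av$ forces almost every term to die: since $\av u_{S}$ begins with $\av$, its leftmost $\bv$ (if there is one) is immediately preceded by an $\av$ and thus creates an $\av\bv$ factor. Hence $\av u_{S}$ avoids $\av\bv$ if and only if $u_{S}$ has no $\bv$ at all, that is, if and only if $S = \varnothing$. Since $h_{\varnothing}^{L} = f_{\varnothing}^{L} = 1$ for every graded poset, the only term surviving modulo $2$ is $\omega(\av^{n}) = \cv^{n}$, giving
$$
\Psi(Z) = \omega(\av \cdot \Psi(L)) \equiv \cv^{n} \bmod 2 .
$$

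To convert this into a statement about the flag $h$-vector I would expand $\cv^{n} = (\av + \bv)^{n} = \sum_{S} u_{S}$, where the sum is over all $\ab$-monomials of degree $n$ and every coefficient is $1$. Comparing with $\Psi(Z) = \sum_{S} h_{S} \cdot u_{S}$ then yields $h_{S} \equiv 1 \bmod 2$ for all $S$, i.e.\ every flag $h$-vector entry of the zonotope is odd. The final assertion of the lemma is the specialization to the cube: applying the same reasoning to Equation~(\ref{equation_cubical}), for which $L = B_{n}$ and $h_{S} = \beta_{n}^{\pm}(S)$, shows that all signed descent set statistics $\beta_{n}^{\pm}(S)$ are odd.

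Throughout I would keep the rank conventions aligned --- $L$ has rank $n$, so that $\av \cdot \Psi(L)$ and $\Psi(Z)$ both have degree $n$ --- so that the exponent in $\cv^{n}$ matches the claim. Beyond that bookkeeping and the clean description of $\omega$, the argument is essentially immediate once the ``$\av$ kills every $S \neq \varnothing$'' observation is in place.
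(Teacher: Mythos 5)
Your proposal is correct and takes essentially the same approach as the paper: the paper's entire argument is the one-sentence observation preceding the lemma that reducing Equation~(\ref{equation_zonotope}) modulo $2$ yields $\Psi(Z) \equiv \cv^{n} \bmod 2$, and your write-up simply makes this rigorous. Your key step --- that prepending $\av$ creates an $\av\bv$ factor in every monomial $\av u_{S}$ with $S \neq \varnothing$, so only $h_{\varnothing} \cdot \omega(\av^{n}) = \cv^{n}$ survives modulo $2$ --- is exactly the intended justification.
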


In order to understand the flag $h$-vector modulo~$4$,
we need a few lemmas.
\begin{lemma}
After expanding the $\cd$-polynomial
$$   \sum_{i=0}^{n-2} \cv^{i} \cdot \dv \cdot \cv^{n-i-2} $$
into an $\ab$-polynomial, exactly half of the coefficients are odd.
\label{lemma_e}
\end{lemma}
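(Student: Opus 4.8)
The plan is to expand the $\cd$-polynomial monomial by monomial and to read off, for each $\ab$-monomial, the parity of its coefficient directly. Substituting $\cv = \av + \bv$ and $\dv = \av\bv + \bv\av$, I would fix an arbitrary $\ab$-monomial $w = w_{1} w_{2} \cdots w_{n}$ with each $w_{j} \in \{\av, \bv\}$ (note the polynomial is homogeneous of degree $n$, so there are $2^{n}$ such monomials) and compute the coefficient of $w$ in the sum. In the $i$-th summand $\cv^{i} \cdot \dv \cdot \cv^{n-i-2}$, the factor $\dv$ occupies positions $i+1$ and $i+2$, while each of the $\cv$'s elsewhere expands into $\av + \bv$ and hence matches whichever letter $w$ carries in that position, with coefficient $1$. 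Since $\dv = \av\bv + \bv\av$, the $i$-th summand contributes to $w$ exactly when $w_{i+1} \neq w_{i+2}$, and in that case it contributes precisely $1$ (only the monomial of $\dv$ equal to $w_{i+1} w_{i+2}$ survives).

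Summing over $i$ from $0$ to $n-2$, the coefficient of $w$ is therefore the number of indices $j \in \{1, \ldots, n-1\}$ for which $w_{j} \neq w_{j+1}$, that is, the number of adjacent unequal pairs in the word $w$. The key step is to determine the parity of this count. Identifying $\av$ with $0$ and $\bv$ with $1$, the indicator that $w_{j} \neq w_{j+1}$ equals $w_{j} + w_{j+1}$ modulo $2$, so the number of such pairs is congruent to
$$
\sum_{j=1}^{n-1} (w_{j} + w_{j+1}) \equiv w_{1} + w_{n} \bmod 2 ,
$$
since each interior letter $w_{2}, \ldots, w_{n-1}$ occurs twice in the sum and so drops out modulo $2$. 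Hence the coefficient of $w$ is odd if and only if the first and last letters of $w$ differ.

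It then remains to count the monomials $w$ whose first and last letters differ: there are two choices for the ordered pair $(w_{1}, w_{n})$ with $w_{1} \neq w_{n}$ and $2^{n-2}$ free choices for the interior letters $w_{2}, \ldots, w_{n-1}$, giving $2 \cdot 2^{n-2} = 2^{n-1}$ monomials, exactly half of the $2^{n}$ monomials of length $n$. This yields the claim. There is no serious obstacle here once the expansion is set up; the one point requiring care is the bookkeeping of positions in the $i$-th summand together with the telescoping of the parity sum, which is precisely what reduces the seemingly global count of adjacent unequal pairs to the purely local condition $w_{1} \neq w_{n}$.
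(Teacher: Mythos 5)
Your proof is correct and is essentially the paper's argument: both reduce the coefficient of each $\ab$-monomial to a count of adjacent unequal letter pairs and observe, by telescoping, that its parity depends only on whether the first and last letters of the monomial differ, then count the $2^{n-1}$ such monomials. The only cosmetic difference is that the paper packages the telescoping as the exact signed identity $\sum_{i=0}^{n-2} \cv^{i}(\av\bv-\bv\av)\cv^{n-i-2} = \av\cv^{n-2}\bv - \bv\cv^{n-2}\av$ after noting $\dv \equiv \av\bv - \bv\av \bmod 2$, whereas you carry out the same cancellation directly modulo $2$.
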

\begin{proof}
Since $\dv \equiv \av\bv - \bv\av \bmod 2$,
it is sufficient to consider the identity
$$
    \sum_{i=0}^{n-2} \cv^{i} \cdot (\av\bv - \bv\av) \cdot \cv^{n-i-2}
 =
    \av \cdot (\av+\bv)^{n-2} \cdot \bv
   -
    \bv \cdot (\av+\bv)^{n-2} \cdot \av  .
$$
This identity holds
since the coefficient of an $\ab$-polynomial in the sum is
the number of occurrences of $\av\bv$
minus
the number of occurrences of $\bv\av$ in the monomial.
This difference only depends on the first and last letter in
the monomial and the identity follows.
To complete the proof, observe that 
out of $2^n$ $\ab$-monomials of degree $n$,
exactly
$2^{n-1}$ appear in the right-hand side of the identity.
\end{proof}

\begin{lemma}
Let $z$ and $w$ be two homogeneous polynomials in $\zab$
of degree $m$ and $n$, respectively,
each having exactly half
of their coefficients odd.
Then the two $\ab$-polynomials
$$    \cv^{i} \cdot z \cdot \cv^{j}
 \:\:\:\: \mbox{ and } \:\:\:\:
 z \cdot \cv^{n} + \cv^{m} \cdot w    $$
also each have exactly half of their coefficients odd.
\label{lemma_f}
\end{lemma}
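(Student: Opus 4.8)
The plan is to work modulo $2$ and keep track, monomial by monomial, of which coefficients are odd. The one fact I would isolate at the outset is that $\cv^{k} = (\av+\bv)^{k}$ equals, over $\Zzz$, the sum of all $2^{k}$ words of length $k$ in $\av$ and $\bv$, each appearing with coefficient exactly $1$; this is immediate from expanding the non-commutative product, since choosing $\av$ or $\bv$ at each of the $k$ positions produces every such word exactly once.

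For the first polynomial $\cv^{i} \cdot z \cdot \cv^{j}$, I would write $z = \sum_{w} z_{w} \cdot w$, the sum ranging over words $w$ of length $m$. Every word $v$ of length $m+i+j$ factors uniquely as $v = v_{1} v_{2} v_{3}$ with $|v_{1}| = i$, $|v_{2}| = m$, $|v_{3}| = j$, and by the observation above the coefficient of $v$ in $\cv^{i} \cdot z \cdot \cv^{j}$ is exactly $z_{v_{2}}$, independent of $v_{1}$ and $v_{3}$. Hence the number of odd coefficients equals $2^{i} \cdot 2^{j}$ times the number of odd coefficients of $z$, namely $2^{i+j} \cdot 2^{m-1} = 2^{m+i+j-1}$, which is half of the $2^{m+i+j}$ monomials of that degree.

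For $z \cdot \cv^{n} + \cv^{m} \cdot w$, I would factor each word $v$ of length $m+n$ uniquely as $v = v' v''$ with $|v'| = m$ and $|v''| = n$. The same computation shows that the coefficient of $v$ in $z \cdot \cv^{n}$ is $z_{v'}$ (independent of $v''$) and the coefficient of $v$ in $\cv^{m} \cdot w$ is $w_{v''}$ (independent of $v'$), so modulo $2$ the coefficient of $v' v''$ in the sum is $z_{v'} + w_{v''}$. This is odd exactly when precisely one of $z_{v'}, w_{v''}$ is odd. Since $z$ has $2^{m-1}$ odd and $2^{m-1}$ even coefficients, and likewise $w$ has $2^{n-1}$ of each, the number of odd coefficients of the sum is $2^{m-1} \cdot 2^{n-1} + 2^{m-1} \cdot 2^{n-1} = 2^{m+n-1}$, again exactly half.

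I do not expect a genuine obstacle: once the expansion $\cv^{k} = \sum_{|u|=k} u$ is in hand, both statements reduce to the bookkeeping above. The only place meriting care is checking that concatenation of words is injective, so that distinct source monomials land on distinct target monomials and no cancellation modulo $2$ can occur between coefficients indexed by different words; this is precisely what makes ``counting odd coefficients'' factor cleanly through the prefix/suffix decomposition. If anything, the subtlety is notational---keeping the roles of the degree-$m$ prefix and the degree-$n$ suffix straight---rather than mathematical.
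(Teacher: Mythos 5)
Your proof is correct and follows essentially the same route as the paper's: decompose each degree-$(m+n)$ monomial uniquely as a concatenation $u\cdot v$, observe that the coefficient in $z \cdot \cv^{n} + \cv^{m} \cdot w$ is the sum of the coefficient of $u$ in $z$ and of $v$ in $w$, and count parities to get $2^{m+n-1}$. The only difference is that you also write out the first statement (the paper omits it as ``similar and easier''), and you count the odd coefficients where the paper counts the even ones --- the same computation either way.
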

\begin{proof}
We only prove the second statement
of the lemma.
We omit the proof of the first, as it is similar
and easier.
Let $u$ and $v$ be two $\ab$-monomials
of degrees $m$ and~$n$, respectively.
The coefficient of
$u \cdot v$
in $z \cdot \cv^{n} + \cv^{m} \cdot w$
is given by the sum
of the coefficients of $u$ in $z$
and of $v$ in $w$.
Hence the coefficient of $u \cdot v$ is even
when
the coefficients of $u$ and $v$
are both even ($2^{m-1} \cdot 2^{n-1}$ cases)
or
the coefficients of $u$ and $v$
are both odd ($2^{m-1} \cdot 2^{n-1}$ cases).
\end{proof}

Combining Lemmas~\ref{lemma_e} and~\ref{lemma_f}, we have:
\begin{proposition}
Let $\alpha_{1}, \ldots, \alpha_{n}$ be integers,
not all of which are even.
When the $\cd$-polynomial
$$   \sum_{i=0}^{n-2} \alpha_{i} \cdot \cv^{i} \cdot \dv \cdot \cv^{n-i-2} $$
is expanded into an $\ab$-polynomial,
exactly half of the coefficients are odd.
\label{proposition_d}
\end{proposition}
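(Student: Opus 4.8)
The plan is to reduce the whole statement modulo $2$ and to exploit the fact that $\cv$ and $\dv$ expand into $\ab$-polynomials with completely transparent coefficients. Writing $\cv = \av + \bv$ and $\dv = \ab + \bv\av$, the monomial $\cv^{i} \cdot \dv \cdot \cv^{n-i-2}$ expands, \emph{with every coefficient equal to $1$}, as the sum of exactly those degree-$n$ words $u = u_{1} u_{2} \cdots u_{n}$ whose letters in positions $i+1$ and $i+2$ differ: the two factors $\cv^{i}$ and $\cv^{n-i-2}$ each contribute every word on their letters with coefficient $1$, while $\dv$ supplies precisely the two ``transition'' patterns $\ab$ and $\bv\av$. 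Reducing the $\alpha_{i}$ modulo $2$ and letting $I = \{i : \alpha_{i} \text{ odd}\}$, which is nonempty by hypothesis, it follows that the coefficient of a word $u$ in the expansion of $\sum_{i} \alpha_{i} \cdot \cv^{i} \cdot \dv \cdot \cv^{n-i-2}$ is congruent mod $2$ to the number of indices $i \in I$ at which $u$ has a transition, i.e. $u_{i+1} \neq u_{i+2}$.

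Next I would encode each word $u$ by its transition vector $t(u) \in \{0,1\}^{n-1}$, defined by $t(u)_{j} = 1$ exactly when $u_{j} \neq u_{j+1}$. Since $u$ is recovered uniquely from the pair consisting of its first letter $u_{1} \in \{\av,\bv\}$ and the vector $t(u)$, the map $u \mapsto t(u)$ is exactly two-to-one and onto $\{0,1\}^{n-1}$. Setting $J = \{i+1 : i \in I\} \subseteq \{1,\ldots,n-1\}$, the parity computed in the first step is simply $\sum_{j \in J} t(u)_{j} \bmod 2$, a fixed $\mathbb{F}_{2}$-linear functional of $t(u)$ which is nonzero because $J$ is nonempty.

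A nonzero linear functional on $\mathbb{F}_{2}^{n-1}$ vanishes on exactly half of its domain, so it is odd on exactly $2^{n-2}$ of the transition vectors. Pulling this count back through the two-to-one map multiplies it by $2$, so precisely $2^{n-1}$ of the $2^{n}$ degree-$n$ words receive an odd coefficient, which is exactly half, as claimed.

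The only point demanding genuine care, and the step I would check most carefully, is the clean expansion asserted in the first paragraph: one must verify that each qualifying word occurs with coefficient \emph{exactly} $1$ rather than merely with odd coefficient, since the subsequent bijective counting relies on an honest equality and not a congruence. I note that this transition-vector argument sidesteps the coalgebraic bookkeeping entirely; alternatively one could assemble the result inductively from Lemmas~\ref{lemma_e} and~\ref{lemma_f}, peeling off terms and repeatedly applying the $z \cdot \cv^{n} + \cv^{m} \cdot w$ splitting, but the direct count above seems both shorter and more illuminating.
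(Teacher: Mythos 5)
Your proof is correct, and it takes a genuinely different route from the paper. The paper obtains this proposition by combining Lemmas~\ref{lemma_e} and~\ref{lemma_f}: Lemma~\ref{lemma_e} handles the full block $\sum_{i}\cv^{i}\dv\cv^{n-i-2}$ via the mod~$2$ identity $\sum_{i}\cv^{i}(\av\bv-\bv\av)\cv^{n-i-2}=\av(\av+\bv)^{n-2}\bv-\bv(\av+\bv)^{n-2}\av$, and Lemma~\ref{lemma_f} supplies the two closure operations (padding by powers of $\cv$, and splicing $z\cdot\cv^{n}+\cv^{m}\cdot w$) needed to assemble an arbitrary odd-support set $I$ --- implicitly one decomposes $I$ into maximal runs of consecutive indices and inducts on the number of runs, a step the paper leaves to the reader. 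Your transition-vector argument replaces all of this with a single direct count: the mod~$2$ coefficient of a word $u$ is the $\mathbb{F}_{2}$-linear functional $\sum_{j\in J}t(u)_{j}$ of its transition vector, the map $u\mapsto(u_{1},t(u))$ is a bijection, and a nonzero linear functional is odd on exactly half its domain. The one step you flagged as delicate is indeed fine: a word $u$ with $u_{i+1}\neq u_{i+2}$ arises in $\cv^{i}\cdot\dv\cdot\cv^{n-i-2}$ from exactly one factorization (the positions of the three factors are forced by their degrees), so the coefficient is honestly $1$, not merely odd. Your route is shorter, self-contained, and more informative, since it identifies precisely which $\ab$-monomials carry odd coefficients; the paper's route buys reusable lemmas in the $\cd$-index spirit, and its Lemma~\ref{lemma_e} identity is essentially the special case of your functional where $J$ is the whole index set, so that the parity depends only on the first and last letters.
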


\begin{theorem}
For a zonotope $Z$ either
(i)
exactly half
of the flag $h$-vector entries
are congruent to $1$ modulo $4$,
and the other half
are congruent to $3$ modulo $4$;
or
(ii)
all the flag $h$-vector entries
are congruent to $1$ modulo~$4$.
\label{theorem_zonotope}
\end{theorem}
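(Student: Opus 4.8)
The plan is to read Equation~(\ref{equation_zonotope}) modulo $4$ and to exploit the factors of $2$ that are built into the map $\omega$. Recall that $\omega$ replaces each occurrence of $\av\bv$ by $2\dv$. Thus if an $\av\bv$-monomial $m$ acquires $k$ letters $\dv$ under this replacement, then $\omega(m)$ is divisible by $2^{k}$, so every $m$ whose image contains two or more $\dv$'s satisfies $\omega(m)\equiv 0\bmod 4$. Since $\omega$ is linear and $\av\cdot\Psi(L)$ is an integral $\av\bv$-polynomial, I would conclude that modulo $4$ only the images containing at most one $\dv$ survive. Writing $n$ for the degree of $\Psi(Z)$, these surviving images are $\cv^{n}$ (from monomials producing no $\dv$) and the single-$\dv$ words $\cv^{i}\dv\cv^{n-i-2}$, each carrying a factor of $2$.

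This produces a mod-$4$ normal form: there are integers $A$ and $\alpha_{0},\ldots,\alpha_{n-2}$ with
$$ \Psi(Z)\equiv A\cdot\cv^{n}+2\sum_{i=0}^{n-2}\alpha_{i}\cdot\cv^{i}\dv\cv^{n-i-2}\bmod 4. $$
To pin down $A$, I would compare the coefficient of the $\av\bv$-monomial $\av^{n}$ on both sides: among the words above only $\cv^{n}$ has $\av^{n}$ in its $\av\bv$-expansion, every single-$\dv$ word containing a letter $\bv$. Hence $A\equiv h_{\varnothing}\bmod 4$, and $h_{\varnothing}=f_{\varnothing}=1$. Expanding into the $\av\bv$-basis and using $\cv^{n}=\sum_{u}u$, I obtain for every $\av\bv$-monomial $u$ that
$$ h_{u}\equiv 1+2\cdot[u]R\bmod 4, \qquad R=\sum_{i=0}^{n-2}\alpha_{i}\cdot\cv^{i}\dv\cv^{n-i-2}, $$
where $[u]R$ is the coefficient of $u$ in the $\av\bv$-expansion of $R$. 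In particular every $h_{u}$ is odd, consistent with Lemma~\ref{lemma_zonotope_odd}, and $h_{u}\equiv 1\bmod 4$ or $h_{u}\equiv 3\bmod 4$ according as $[u]R$ is even or odd.

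The dichotomy then follows at once. If all the $\alpha_{i}$ are even, then $R\equiv 0\bmod 2$, so $[u]R$ is even for every $u$ and all flag $h$-vector entries are congruent to $1$ modulo $4$, which is alternative~(ii). If not all $\alpha_{i}$ are even, then Proposition~\ref{proposition_d} applies to $R$ and shows that exactly half of the coefficients $[u]R$ are odd; equivalently, exactly half of the entries $h_{u}$ are congruent to $3$ modulo $4$ and the other half to $1$, which is alternative~(i).

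I expect the crux of the argument to be the first paragraph: recognizing that each $\dv$ manufactured by $\omega$ carries a factor of $2$, so that modulo $4$ the $\cd$-index of an arbitrary zonotope collapses to a combination of $\cv^{n}$ and single-$\dv$ words. Once this normal form is in hand, the remaining points are routine, namely the identification $A\equiv h_{\varnothing}=1\bmod 4$ (which rules out an ``all entries congruent to $3$'' branch and forces alternative~(ii) to be the all-ones case) and the direct appeal to Proposition~\ref{proposition_d}.
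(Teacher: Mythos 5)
Your proposal is correct and follows essentially the same route as the paper's proof: reduce Equation~(\ref{equation_zonotope}) modulo $4$ using the factor of $2$ attached to each $\dv$ produced by $\omega$, obtain the normal form $\cv^{n} + 2\sum_{i}\alpha_{i}\cv^{i}\dv\cv^{n-i-2}$, and invoke Proposition~\ref{proposition_d}. The only difference is that you explicitly justify that the coefficient of $\cv^{n}$ is $1$ (via the coefficient of $\av^{n}$ and $h_{\varnothing}=1$), a detail the paper leaves implicit.
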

\begin{proof}
Considering the identity~(\ref{equation_zonotope}),
we observe that the only terms in the right-hand side with
non-zero coefficients modulo $4$
are $\cv^{n}$ and
those $\cd$-monomials
having
exactly one~$\dv$, that is,
$$
 \Psi(Z)
     \equiv
 \cv^{n}
     +
 2 \cdot \left(
     \sum_{i=0}^{n-2} \alpha_{i} \cdot \cv^{i} \cdot \dv \cdot \cv^{n-i-2}
         \right)
 \bmod 4 .
$$
If all the $\alpha_{i}$'s are even
then the flag $h$-vector entries
are congruent to $1$ modulo~$4$.
If at least one $\alpha_{i}$ is odd,
then by
Proposition~\ref{proposition_d}
exactly half of the flag $h$-vector entries
are congruent to $1$ modulo $4$
and the other half
are congruent to $3$ modulo~$4$.
\end{proof}

Now we consider the cubical lattice, that is,
the signed descent set statistics modulo $4$.
\begin{theorem}
For an integer $n \geq 2$, exactly half
of the signed descent set statistics
are congruent to $1$ modulo $4$,
and the other half
are congruent to $3$ modulo $4$.
\label{theorem_signed_permutations}
\end{theorem}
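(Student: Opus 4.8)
The plan is to identify $C_{n}$ as a zonotope and then show that it lands in alternative~(i) of Theorem~\ref{theorem_zonotope}. The cubical lattice $C_{n}$ is the face lattice of the $n$-dimensional cube, which is the zonotope of the coordinate hyperplane arrangement, and its intersection lattice is the Boolean algebra $B_{n}$. Hence Theorem~\ref{theorem_zonotope} applies and guarantees that either exactly half of the signed descent statistics are congruent to $1$ modulo~$4$ and the other half to~$3$, or else all of them are congruent to~$1$. Everything therefore reduces to ruling out the second alternative, that is, to exhibiting one odd coefficient $\alpha_{i}$ in the expansion $\Psi(C_{n}) \equiv \cv^{n} + 2 \cdot \sum_{i} \alpha_{i} \cdot \cv^{i} \dv \cv^{n-i-2} \bmod 4$ used in the proof of Theorem~\ref{theorem_zonotope}.

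To access the $\alpha_{i}$ I would compute directly from Equation~(\ref{equation_cubical}), writing $\Psi(B_{n}) = \sum_{S} \beta_{n}(S) \cdot u_{S}$, since $h_{S} = \beta_{n}(S)$ for the Boolean algebra. Applying $\omega$ modulo~$4$, every $\ab$-monomial with two or more occurrences of $\av\bv$ contributes a multiple of~$4$ and so drops out; the terms carrying a single $\dv$ come precisely from the monomials of the shape $\av \cdot u_{S} = \av^{a} \bv^{b} \av^{c}$ having exactly one block of $\bv$'s, and for these $\omega(\av^{a}\bv^{b}\av^{c}) = 2 \cdot \cv^{a-1} \dv \cv^{n-a-1}$. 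Such monomials correspond exactly to the descent sets $S$ that are intervals $\{a, a+1, \ldots, a+b-1\}$, so modulo~$2$ the coefficient $\alpha_{a-1}$ equals $\sum_{b} \beta_{n}(\{a, \ldots, a+b-1\})$.

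I would then single out the term $\dv \cv^{n-2}$, corresponding to $i = 0$ and $a = 1$, whose coefficient is $\sum_{b=1}^{n-1} \beta_{n}(\{1, \ldots, b\}) \bmod 2$. A permutation whose descent set is an initial segment $\{1, \ldots, b\}$ is exactly one that strictly decreases and then strictly increases, and such permutations are counted by choosing the values of the decreasing prefix, giving $2^{n-1}$ of them including the identity. Hence $\sum_{b=1}^{n-1} \beta_{n}(\{1, \ldots, b\}) = 2^{n-1} - 1$, which is odd for every $n \geq 2$, so $\alpha_{0}$ is odd and alternative~(i) holds. The main obstacle is the middle step: carefully determining which $\ab$-monomials survive under $\omega$ modulo~$4$ and verifying that only interval descent sets feed a given single-$\dv$ coefficient. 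Once this bookkeeping is settled, the valley-permutation count makes the relevant parity transparent and, pleasantly, avoids any case analysis on the binary expansion of~$n$.
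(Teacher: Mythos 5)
Your proof is correct, and it shares the paper's skeleton --- identify $C_{n}$ as the zonotope of the coordinate arrangement and invoke Theorem~\ref{theorem_zonotope} --- but the step that decides between the two alternatives is genuinely different from, and much heavier than, the paper's. The paper rules out alternative~(ii) with a one-line observation that uses only the \emph{statement} of Theorem~\ref{theorem_zonotope}: the cubical lattice has $2^{n}$ atoms, so $h_{\{1\}} = 2^{n}-1 \equiv 3 \bmod 4$ for $n \geq 2$, which is incompatible with all flag $h$-vector entries being $\equiv 1 \bmod 4$. You instead re-enter the \emph{proof} of Theorem~\ref{theorem_zonotope} and certify its hypothesis ``some $\alpha_{i}$ is odd'' directly. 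Your bookkeeping of $\omega$ modulo $4$ is right: a monomial with $k$ occurrences of $\av\bv$ picks up a factor $2^{k}$, and since every monomial $\av \cdot u_{S}$ begins with $\av$, it has exactly one $\av\bv$ occurrence precisely when $S$ is a nonempty interval, giving $\alpha_{a-1} \equiv \sum_{b} \beta_{n}(\{a,\ldots,a+b-1\}) \bmod 2$. Your valley-permutation count is also right: a permutation with descent set $\{1,\ldots,b\}$ must have its minimum at position $b+1$, so $\beta_{n}(\{1,\ldots,b\}) = \binom{n-1}{b}$ and $\sum_{b=1}^{n-1}\beta_{n}(\{1,\ldots,b\}) = 2^{n-1}-1$, which is odd for $n \geq 2$; hence $\alpha_{0}$ is odd and Proposition~\ref{proposition_d} yields alternative~(i). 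What your route buys is extra information --- the parity of every single-$\dv$ coefficient of $\Psi(C_{n})$ expressed through type~$A$ descent statistics --- at the cost of depending on the internal structure of the proof of Theorem~\ref{theorem_zonotope} (equivalently, of citing Proposition~\ref{proposition_d} explicitly), whereas the paper's atom count is shorter and stays at the level of the theorem's statement.
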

\begin{proof}
Observe that there are $2^{n}$ atoms in the cubical lattice.
Hence $h_{\{1\}} = 2^{n} - 1 \equiv 3 \bmod 4$.
Thus the result follows from Theorem~\ref{theorem_zonotope}.
\end{proof}

\section{Descent set statistics modulo $2p$}
\label{section_twice_prime}

For a set $S$ of integers and a non-zero integer $q$,
define the two notions
$q \cdot S$ and $S/q$ by
\begin{eqnarray*}
q \cdot S & = & \{q \cdot s \:\: : \:\: s \in S\} , \\
S/q       & = & \{s/q \:\: : \:\: s \in S \mbox{ \rm and } q \: | \: s \} .
\end{eqnarray*}
Recall that $\alpha_{n}(S)$ (resp.,\ $\beta_{n}(S)$)
denotes the number of permutations
in ${\mathfrak S}_{n}$ with descent set contained in
(resp.,\ equal to) $S$.

\begin{proposition}
Let $q = p^{t}$,
where $p$ is a prime and $t$ is a non-negative integer.
Let $n = r \cdot q$, where $r$ is a positive integer.
Then the descent set statistics modulo~$p$
are given by
$$    \beta_{n}(S)
  \equiv
       (-1)^{|S - q \cdot [r-1]|}
     \cdot
       \beta_{r}(S/q)
  \bmod p  ,
$$
where $S \subseteq [n-1]$.
\label{proposition_r_p_t}
\end{proposition}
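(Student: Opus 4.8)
The plan is to reduce the statement to MacMahon's identity (Lemma~\ref{lemma_MacMahon}) together with a modular analysis of the multinomial coefficients $\alpha_{n}(T)$, running in close parallel to the treatment of the prime $2$ in Section~\ref{section_proportion}. By Lemma~\ref{lemma_MacMahon} we have $\beta_{n}(S) = \sum_{T\subseteq S} (-1)^{|S-T|}\cdot\alpha_{n}(T)$, so it is enough to understand each residue $\alpha_{n}(T)\bmod p$ and then carry out the alternating sum. The set $q\cdot[r-1]=\{q,2q,\ldots,(r-1)q\}$ is exactly the set of multiples of $q$ lying in $[n-1]$, since the next multiple $rq=n$ is excluded; this will be the combinatorial skeleton of the whole argument.

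The key technical input, which I expect to be the main obstacle, is a two-part claim about $\alpha_{n}(T)\bmod p$ generalizing Lemmas~\ref{alpha_S_zero_mod_2} and~\ref{lemma_alpha_S_odd} from $q=2$ to an arbitrary prime power $q=p^{t}$. First, I would show that $\alpha_{n}(T)\equiv 0\bmod p$ unless every part of the composition $D(T)=(\gamma_{1},\ldots,\gamma_{m})$ is divisible by $q$, equivalently unless $T\subseteq q\cdot[r-1]$. This follows from Kummer's theorem: since $n=rq$ has its $t$ lowest base-$p$ digits equal to zero, a part not divisible by $q$ has a nonzero digit in one of those low positions; taking the lowest such position, the digits of the parts there sum to a positive multiple of $p$ (the digit of $n$ being zero and no carry entering from below), which forces a carry, so $p\mid\alpha_{n}(T)$. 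Second, for $T\subseteq q\cdot[r-1]$, writing $T=q\cdot T'$ with $T'=T/q$, I would show $\alpha_{n}(T)\equiv\alpha_{r}(T')\bmod p$. Here $D(T)=(q\gamma_{1}',\ldots,q\gamma_{m}')$ where $(\gamma_{1}',\ldots,\gamma_{m}')=D(T')$, and multiplication by $q=p^{t}$ merely shifts all base-$p$ digits up by $t$ while inserting $t$ zero digits. Applying Lucas' congruence digit by digit, the inserted low digits contribute only trivial factors, so the product collapses to the Lucas expansion of $\binom{r}{D(T')}$, giving $\alpha_{n}(T)\equiv\alpha_{r}(T')\bmod p$.

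Granting the claim, the remainder is routine sign bookkeeping. Substituting into MacMahon's identity and discarding the vanishing terms leaves only $T\subseteq S_{0}$, where $S_{0}=S\cap(q\cdot[r-1])$ is the set of multiples of $q$ in $S$. For such $T$ one has $|S-T|=|S-S_{0}|+|S_{0}-T|$, so I would factor the sign and pull the constant $(-1)^{|S-S_{0}|}=(-1)^{|S-q\cdot[r-1]|}$ out of the sum. Finally, the substitution $T'=T/q$ is a size- and containment-preserving bijection between the subsets of $S_{0}$ and the subsets of $S_{0}/q=S/q$, so the inner sum becomes $\sum_{T'\subseteq S/q}(-1)^{|(S/q)-T'|}\cdot\alpha_{r}(T')$, which is exactly $\beta_{r}(S/q)$ by Lemma~\ref{lemma_MacMahon} applied with $r$ in place of $n$. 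Combining these yields $\beta_{n}(S)\equiv(-1)^{|S-q\cdot[r-1]|}\cdot\beta_{r}(S/q)\bmod p$, as desired. The real delicacy is confined to the first half of the claim, namely correctly tracking carries in base $p$ for a general prime power $q$; once this modular dictionary for $\alpha_{n}$ is established, everything else is mechanical.
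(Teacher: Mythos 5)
Your proof is correct, but it takes a genuinely different route from the paper's. The paper works entirely inside the ring of quasisymmetric functions: starting from $F(B_r) = M_{(1)}^r = \sum_{S\subseteq[r-1]}\alpha_r(S)\cdot M_S$, it applies the substitution $w_i \longmapsto w_i^q$, expands each $M_{q\cdot S}$ in the fundamental basis $L_T$ (this is where the sign $(-1)^{|T-q\cdot[r-1]|}$ and the set $T/q$ emerge), and then invokes the Frobenius congruence $M_{(1)}^q \equiv M_{(q)} \bmod p$ to get $F(B_n) = (M_{(1)}^q)^r \equiv M_{(q)}^r \bmod p$; the proposition follows by reading off the coefficients of $L_T$, since $F(B_n) = \sum_T \beta_n(T)\cdot L_T$. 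You instead generalize the mod-$2$ analysis of Section~\ref{section_proportion} to an arbitrary prime power: MacMahon's identity (Lemma~\ref{lemma_MacMahon}) reduces everything to $\alpha_n(T) \bmod p$; Kummer's theorem kills the terms with $T \not\subseteq q\cdot[r-1]$ (your lowest-nonzero-digit argument for forcing a carry is sound); the multinomial form of Lucas' congruence gives $\alpha_n(q\cdot T') \equiv \alpha_r(T') \bmod p$; and the sign factorization $|S-T| = |S-q\cdot[r-1]| + |S_0-T|$ for $T \subseteq S_0 = S \cap (q\cdot[r-1])$ finishes the computation. Both arguments are complete. Yours is more elementary and self-contained, needing no quasisymmetric machinery, only the digit calculus already set up for the prime $2$; the one external ingredient is the multinomial Lucas congruence, which the paper cites only for binomial coefficients, though it follows by iterating the binomial version (note that Kummer alone would not suffice for this step, since it detects divisibility but not the actual residue). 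The paper's version outsources all carry-tracking to the single congruence $M_{(1)}^q \equiv M_{(q)} \bmod p$ and does the sign bookkeeping once, in the $M$-to-$L$ change of basis, which keeps the argument inside the framework it reuses in Sections~\ref{section_QSym} and~\ref{section_signed_descent}.
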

\begin{proof}
Observe that
$$    M_{(1)}^{r}
  =
     F(B_{r})
  =
     \sum_{S \subseteq [r-1]} \alpha_{r}(S) \cdot M_{S}  . $$
In this quasisymmetric function identity make the
substitution $w_{i} \longmapsto w_{i}^{q}$. We then obtain
\begin{eqnarray*}
M_{(q)}^{r}
 & = &
     \sum_{S \subseteq [r-1]} \alpha_{r}(S) \cdot M_{q \cdot S}  \\
 & = &
     \sum_{S \subseteq [r-1]}
     \sum_{\onethingatopanother{T \subseteq [n-1]}
                               {q \cdot S \subseteq T}}
         \alpha_{r}(S) \cdot (-1)^{|T - q \cdot S|} \cdot L_{T}  \\
 & = &
     \sum_{T \subseteq [n-1]}
     \sum_{q \cdot S \subseteq T}
         \alpha_{r}(S) \cdot (-1)^{|T - q \cdot S|} \cdot L_{T}  \\
 & = &
     \sum_{T \subseteq [n-1]}
         (-1)^{|T - q \cdot [r-1]|}
       \cdot
         \left(
               \sum_{S \subseteq T/q}
                      \alpha_{r}(S) \cdot (-1)^{|T/q - S|}
         \right)
       \cdot
         L_{T}  \\
 & = &
     \sum_{T \subseteq [n-1]}
         (-1)^{|T - q \cdot [r-1]|}
       \cdot
         \beta_{r}(T/q)
       \cdot
         L_{T}  .
\end{eqnarray*}
Since $M_{1}^{q} \equiv M_{(q)} \bmod p$,
we have $F(B_{n}) = (M_{1}^{q})^{r} \equiv M_{(q)}^{r} \bmod p$.
Now by reading off the coefficients of $L_{T}$, the result follows.
\end{proof}

\begin{corollary}
Let $q = p^{t}$,
where $p$ is a prime and $t$ is a non-negative integer.
Then
$$ \beta_{q}(S) \equiv (-1)^{|S|} \bmod p  . $$
\end{corollary}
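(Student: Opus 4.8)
The plan is to obtain the corollary simply as the special case $r = 1$ of Proposition~\ref{proposition_r_p_t}. Setting $r = 1$ forces $n = r \cdot q = q$, so the hypotheses of the proposition are satisfied and $S$ ranges over all subsets of $[q-1]$, which is exactly the range of subsets appearing in the corollary. Thus the whole task reduces to evaluating the two factors on the right-hand side of the proposition in this degenerate case.

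First I would simplify the sign. Since $r = 1$, the index set $[r-1] = [0]$ is empty, so $q \cdot [r-1] = \varnothing$ and therefore $|S - q \cdot [r-1]| = |S|$. This already produces the desired factor $(-1)^{|S|}$. Next I would show that the remaining factor $\beta_{r}(S/q) = \beta_{1}(S/q)$ equals $1$. Here the key observation is that every element of $S \subseteq [q-1]$ is strictly smaller than $q$, hence not divisible by $q$; by the definition of $S/q$ this forces $S/q = \varnothing$. Since there is a unique permutation in ${\mathfrak S}_{1}$ and it has empty descent set, $\beta_{1}(\varnothing) = 1$. Combining the two evaluations yields $\beta_{q}(S) \equiv (-1)^{|S|} \cdot 1 = (-1)^{|S|} \bmod p$, as claimed.

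I do not anticipate any genuine obstacle, as the argument is a direct specialization of a result already established. The only points that warrant a moment's care are the boundary conventions: verifying that $[0] = \varnothing$ so that the exponent collapses cleanly to $|S|$, and confirming that $S/q = \varnothing$ together with $\beta_{1}(\varnothing) = 1$ so that the second factor contributes nothing. Once these degenerate cases are checked, the corollary follows immediately.
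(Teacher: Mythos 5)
Your proposal is correct and matches the paper's own first-stated justification: the paper's proof begins by noting that the corollary ``can be deduced from Proposition~\ref{proposition_r_p_t} by setting $r=1$,'' which is exactly your argument, and your careful handling of the degenerate conventions ($[0]=\varnothing$, $S/q=\varnothing$, $\beta_{1}(\varnothing)=1$) fills in precisely the details the paper leaves implicit. The paper additionally records a self-contained alternative via $F(B_{q}) \equiv M_{(q)} \bmod p$, but your specialization route is the one it cites first and needs no further justification.
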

\begin{proof}
The claim can be deduced from Proposition~\ref{proposition_r_p_t}
by setting $r=1$.
A direct argument proceeds 
as follows.
Since $(a+b)^{q} \equiv a^{q} + b^{q} \bmod p$,
we have 
\begin{eqnarray*}
     F(B_{q})
  & = &
     (w_{1} + w_{2} + \cdots)^{q} \\
  & \equiv &
     w_{1}^{q} + w_{2}^{q} + \cdots \\
  & = &
     M_{(q)} = \sum_{S} (-1)^{|S|} \cdot L_{S} \bmod p.
\end{eqnarray*}
\end{proof}

\begin{corollary}
Let $q = p^{t}$,
where $p$ is a prime and $t$ is a non-negative integer.
Then
$$ \beta_{2q}(S) \equiv (-1)^{|S-\{q\}|} \bmod p  . $$
\label{corollary_2q_mod_p}
\end{corollary}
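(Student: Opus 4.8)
The plan is to obtain this as the special case $r = 2$ of Proposition~\ref{proposition_r_p_t}, so that $n = r \cdot q = 2q$. First I would substitute $r = 2$ directly into the statement of the proposition, which yields
$$
\beta_{2q}(S) \equiv (-1)^{|S - q \cdot [1]|} \cdot \beta_{2}(S/q) \bmod p
$$
for $S \subseteq [2q-1]$. Since $[r-1] = [1] = \{1\}$, the set $q \cdot [1]$ equals $\{q\}$, and so the sign appearing here already matches the desired exponent $(-1)^{|S - \{q\}|}$.

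It then remains to verify that the factor $\beta_{2}(S/q)$ contributes nothing, that is, that it equals $1$. The two permutations in $\mathfrak{S}_{2}$ are $12$ and $21$, with descent sets $\varnothing$ and $\{1\}$ respectively, so $\beta_{2}(\varnothing) = \beta_{2}(\{1\}) = 1$; equivalently $\beta_{2}(T) = 1$ for every $T \subseteq \{1\}$. Thus, whatever the value of $S/q$ turns out to be, the factor is $1$, and the congruence collapses to $\beta_{2q}(S) \equiv (-1)^{|S - \{q\}|} \bmod p$.

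There is essentially no obstacle in this argument; the only point that warrants a moment's care is checking that $\beta_{2}(S/q)$ is evaluated in the correct range, namely that $S/q \subseteq [r-1] = \{1\}$. This is immediate from the definition of $S/q$: since $S \subseteq [2q-1]$, the only multiple of $q$ that can belong to $S$ is $q$ itself, so $S/q$ is either $\varnothing$ or $\{1\}$. Hence the corollary follows directly, and one could equally well record it simply as the $r = 2$ instance of Proposition~\ref{proposition_r_p_t}.
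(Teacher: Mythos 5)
Your proposal is correct and follows exactly the paper's own proof: specialize Proposition~\ref{proposition_r_p_t} to $r = 2$ and use $\beta_{2}(\varnothing) = \beta_{2}(\{1\}) = 1$. Your additional check that $S/q \subseteq \{1\}$ (since the only multiple of $q$ in $[2q-1]$ is $q$ itself) is a fine, if routine, verification that the paper leaves implicit.
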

\begin{proof}
Follows from Proposition~\ref{proposition_r_p_t}
by setting $r=2$ and noting that
$\beta_{2}(\varnothing) = \beta_{2}(\{1\}) = 1$.
\end{proof}

For $n$ having the binary expansion
$n = 2^{j_{1}} + 2^{j_{2}} + \cdots + 2^{j_{k}}$,
where
$j_{1} > j_{2} > \cdots > j_{k} \geq 0$,
recall that an element $j \in [n-1]$ is
nonessential if $j$ is not a sum of a subset 
of
$\{2^{j_{1}}, 2^{j_{2}}, \ldots, 2^{j_{k}}\}$.

\begin{theorem}
Let $q = p^{t}$,
for $p$ an odd prime and $t$ a non-negative integer,
and let $n = r \cdot q$, where $r$ is a positive integer.
Suppose that there is a nonessential element
$j \in [n-1]$ that is not divisible by $q$.
Furthermore, suppose that there exist integers $a$ and $b$ not
divisible by $p$ such that $a\equiv b \bmod 2$, and
$\beta_{n}(S)$ is congruent to either $a$ or $b$ modulo $p$
for all $S\subseteq [n-1]$.
Then
\begin{eqnarray*}
 |\{S \subseteq [n-1] \: : \:
           \beta_{n}(S) \equiv a \bmod 2p \}|
 & = &
 |\{S \subseteq [n-1] \: : \:
           \beta_{n}(S) \equiv b \bmod 2p \}|      , \\
 |\{S \subseteq [n-1] \: : \:
           \beta_{n}(S) \equiv a+p \bmod 2p \}|
 & = &
 |\{S \subseteq [n-1] \: : \:
           \beta_{n}(S) \equiv b+p \bmod 2p \}|      ,
\end{eqnarray*}
In the case when the proportion $\rho(n)$  is~$1/2$,
the four cardinalities above are all equal to
$2^{n-3}$.
\label{theorem_prime_power_r}
\end{theorem}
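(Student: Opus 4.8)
The plan is to exploit a single fixed-point-free involution on the subsets of $[n-1]$, namely $\iota(S) = S \,\triangle\, \{j\}$, which toggles the guaranteed nonessential element $j$ in and out of $S$. First I would record its two crucial effects on $\beta_{n}$. Since $j$ is nonessential, Lemma~\ref{lemma_nonessential_element} shows that $\iota$ preserves the parity of $\beta_{n}$, so $\beta_{n}(\iota(S)) \equiv \beta_{n}(S) \bmod 2$. Since $j$ is not divisible by $q$, Proposition~\ref{proposition_r_p_t} shows that $\iota$ negates the value modulo $p$: toggling $j$ leaves $S/q$ unchanged (because $q \nmid j$) while changing the parity of $|S - q \cdot [r-1]|$ by one (because $j \notin q \cdot [r-1]$), whence $\beta_{n}(\iota(S)) \equiv -\beta_{n}(S) \bmod p$.

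These two facts already force the structure. Because $\iota$ is a bijection that negates residues modulo $p$, the set of residues actually attained by $\beta_{n}$ is closed under negation; as it lies in $\{a,b\}$ and $-a \not\equiv a \bmod p$ (using that $p$ is odd and $p \nmid a$), one deduces $b \equiv -a \bmod p$. I would then partition the $2^{n-1}$ subsets into four classes $A, A', B, B'$ according to whether $\beta_{n}(S) \equiv a$ or $\equiv b \bmod p$ and whether the parity of $\beta_{n}(S)$ agrees with or differs from that of $a$ (recall $a \equiv b \bmod 2$); since $p$ is odd these correspond exactly, via the Chinese Remainder Theorem, to the residues $\beta_{n}(S) \equiv a,\, a+p,\, b,\, b+p \bmod 2p$. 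Reading off the two effects of $\iota$, it preserves parity and carries residue $a$ to $b$, so it maps $A$ bijectively onto $B$ and $A'$ bijectively onto $B'$. This yields $|A| = |B|$ and $|A'| = |B'|$, which are precisely the two claimed equalities.

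For the final assertion, suppose $\rho(n) = 1/2$, so that exactly $2^{n-2}$ of the values $\beta_{n}(S)$ are odd. The four classes partition all subsets, giving $|A| + |A'| + |B| + |B'| = 2^{n-1}$, together with $|A| = |B|$ and $|A'| = |B'|$ from the previous step. The odd-valued subsets are either $A \cup B$ or $A' \cup B'$, according to the common parity of $a$ and $b$; in either case the odd count $2^{n-2}$ equals $2|A|$ or $2|A'|$, while the even count is the complementary pair. Solving this small linear system forces all four cardinalities to equal $2^{n-3}$.

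The genuinely delicate point is the bookkeeping that lets one involution control both congruences simultaneously: I must check that toggling $j$ negates $\beta_{n}$ modulo the odd prime $p$ yet leaves its parity intact, and then combine these through the Chinese Remainder Theorem so that ``negate mod $p$, preserve mod $2$'' becomes the clean swap $a \leftrightarrow b$, $a+p \leftrightarrow b+p$ among the four classes modulo $2p$. Establishing $b \equiv -a \bmod p$ as a \emph{forced} consequence of the hypotheses, rather than an extra assumption, is the conceptual crux; once it is in place, the remaining counting is routine.
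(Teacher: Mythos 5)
Your proposal is correct and follows essentially the same route as the paper: toggling the nonessential, non-multiple-of-$q$ element $j$ preserves parity by Lemma~\ref{lemma_nonessential_element} and negates the value modulo $p$ by Proposition~\ref{proposition_r_p_t}, which swaps the residues $a \leftrightarrow b$ and $a+p \leftrightarrow b+p$ modulo $2p$ within each parity class. Your explicit derivation that $b \equiv -a \bmod p$ is forced by the hypotheses is a point the paper leaves implicit, but it is the same argument.
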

\begin{proof}
Consider the collection of sets $S \subseteq [n-1]$
such that $\beta_{n}(S) \equiv a \equiv b \bmod 2$.
For~$S$ in this collection such that $j \not\in S$,
we have 
$\beta_{n}(S) \equiv \beta_{n}(S \cup \{j\}) \bmod 2$.
However, since $q$ does not divide~$j$,
we have 
$\beta_{n}(S) \equiv -\beta_{n}(S \cup \{j\}) \bmod p$
by Proposition~\ref{proposition_r_p_t}, that is,
$\beta_{n}(S) \not\equiv \beta_{n}(S \cup \{j\}) \bmod p$,
as $a$ (resp., $b$) is not congruent to $-a$
(resp.,\ $-b$) modulo $p$.
Hence 
this collection 
splits into two classes of equal size
when divided according to the value of
$\beta(S)$ modulo $2p$.
The same argument holds for
the sets $S$ satisfying
$\beta(S) \equiv a+p \equiv b+p \bmod 2$.
\end{proof}

By the Chinese remainder theorem, we have 
$\beta(S) \equiv \pm 1, p \pm 1 \bmod 2 p$.
For non-Mersenne primes we can say more.
(Recall that $\rho(n)$ is defined as the ratio of
the number of subsets $S\subseteq [n-1]$ such that
$\beta_{n}(S)$ is odd to the total number $2^{n-1}$ of subsets
of $[n-1]$.)
\begin{theorem}
Let $q = p^{t}$
be an odd prime power which has $k$ $1$'s in its binary expansion.
Suppose that $q > 2^{k} - 1$, that is, $q$ is not a Mersenne prime.
Then
\begin{eqnarray*}
 |\{S \subseteq [q-1] \: : \:
           \beta_{q}(S) \equiv 1 \bmod 2p \}|
 & = &
 |\{S \subseteq [q-1] \: : \:
           \beta_{q}(S) \equiv -1 \bmod 2p \}|      , \\
 |\{S \subseteq [q-1] \: : \:
           \beta_{q}(S) \equiv p-1 \bmod 2p \}|
 & = &
 |\{S \subseteq [q-1] \: : \:
           \beta_{q}(S) \equiv p+1 \bmod 2p \}|     ,
\end{eqnarray*}
In the case the proportion $\rho(q)$ is  $1/2$,
the four cardinalities above are equal to
$2^{q-3}$.
\label{theorem_prime_a}
\end{theorem}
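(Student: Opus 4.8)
The plan is to deduce this theorem as the special case $r = 1$ (so $n = q$), $a = 1$, $b = -1$ of Theorem~\ref{theorem_prime_power_r}. First I would check the hypotheses of that theorem. The corollary stating that $\beta_{q}(S) \equiv (-1)^{|S|} \bmod p$ shows that for every $S \subseteq [q-1]$ the value $\beta_{q}(S)$ is congruent to either $1$ or $-1$ modulo $p$; and since $p$ is an odd prime, the choices $a = 1$ and $b = -1$ are both coprime to $p$ and satisfy $a \equiv b \bmod 2$. So the only nontrivial hypothesis left to verify is the existence of a suitable nonessential element.

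The crux is exactly here, and it is where the non-Mersenne assumption enters. Writing $q = 2^{j_{1}} + \cdots + 2^{j_{k}}$, the essential elements of $[q-1]$ are precisely the integers $\sum_{i \in B} 2^{j_{i}}$ as $B$ ranges over the nonempty proper subsets of $[k]$; since distinct subsets of a set of distinct powers of $2$ yield distinct sums, there are exactly $2^{k} - 2$ of them. Consequently the number of nonessential elements of $[q-1]$ equals $(q-1) - (2^{k}-2) = q - (2^{k}-1)$, which is positive precisely when $q > 2^{k} - 1$, that is, precisely when $q$ is not a Mersenne prime. Any such nonessential $j$ lies in $[q-1]$ and is therefore automatically not divisible by $q$ (in the notation of Theorem~\ref{theorem_prime_power_r}, $r=1$ and $q \nmid j$). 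With all hypotheses in place, Theorem~\ref{theorem_prime_power_r} immediately produces the two displayed cardinality equalities, using that $a+p = p+1$ and $b+p = p-1$.

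For the final assertion, when $\rho(q) = 1/2$, I would add a parity observation combined with the Chinese remainder theorem. Among the four residues $1$, $-1$, $p+1$, $p-1$ modulo $2p$, the first two are odd and the last two are even, because $p$ is odd. Hence the subsets $S$ with $\beta_{q}(S)$ odd are exactly those landing in the classes $\equiv 1$ and $\equiv -1$ modulo $2p$, and $\rho(q) = 1/2$ forces their number to be $2^{q-1}/2 = 2^{q-2}$; the first equality above then splits this count evenly, yielding $2^{q-3}$ in each class. The same reasoning applied to the remaining $2^{q-2}$ subsets, those with $\beta_{q}(S)$ even, distributes them equally between the classes $\equiv p-1$ and $\equiv p+1$, so all four cardinalities equal $2^{q-3}$. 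I expect the main obstacle to be the second paragraph: recognizing that the hypothesis $q > 2^{k}-1$ is literally the assertion that a nonessential element of $[q-1]$ exists, and counting the essential elements correctly; once this is seen, the rest is a direct invocation of Theorem~\ref{theorem_prime_power_r} together with elementary residue bookkeeping.
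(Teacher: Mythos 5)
Your proof is correct and takes essentially the same route as the paper: the paper's own (very terse) proof likewise observes that $q > 2^{k}-1$ guarantees a nonessential element of $[q-1]$ and then invokes Theorem~\ref{theorem_prime_power_r} with $r=1$, $a=1$, $b=-1$. Your explicit count of the $2^{k}-2$ essential elements, the check that elements of $[q-1]$ are automatically not divisible by $q$, and the mod-$2p$ parity bookkeeping for the $2^{q-3}$ claim are exactly the details the paper leaves implicit.
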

\begin{proof}
Since $q > 2^{k} - 1$,
as in the proof of Theorem~\ref{theorem_knapsack}
there exists a nonessential element $j \in [q-1]$.
Thus Theorem~\ref{theorem_prime_power_r} applies.
\end{proof}

When $q = p$ is a prime
and $k=2$, Theorem~\ref{theorem_prime_a}
applies only to the Fermat primes
which are greater than $3$,
that is, $5$, $17$, $257$ and $65537$.
We also know that the proportion is $1/2$ for
the case $k=3$, that is, primes whose binary expansion
has three $1$'s.
The first few such primes are
$7, 11, 13, 19, 37, 41, 67, 73, 97$;
see sequence A081091 in
The On-Line Encyclopedia of Integer Sequences.

For prime powers of the
form $q = p^t$ with $t \geq 2$,
the only case with $k=2$ we know is $q = 3^{2}$.
Similarly, with $k = 3$ we know six cases:
$5^{2}$,  $7^{2}$,  $3^{4}$,  $17^{2}$,  $23^{2}$,
$257^{2}$ and $65537^{2}$.
It is not surprising that the squares of the Fermat's primes
and the square of $3^{2}$ appear in this list.
The two sporadic cases are $7^{2}$ and $23^{2}$.

The next theorem concerns permutations of size
twice a prime power.

\begin{theorem}
Let $q = p^{t}$
be an odd prime power which has $k$ $1$'s in its binary expansion.
Then
\begin{eqnarray*}
 |\{S \subseteq [2q-1] \: : \:
           \beta_{2q}(S) \equiv 1 \bmod 2p \}|
 & = &
 |\{S \subseteq [2q-1] \: : \:
           \beta_{2q}(S) \equiv -1 \bmod 2p \}|, \\
 |\{S \subseteq [2q-1] \: : \:
           \beta_{2q}(S) \equiv p-1 \bmod 2p \}|
 & = &
 |\{S \subseteq [2q-1] \: : \:
           \beta_{2q}(S) \equiv p+1 \bmod 2p \}|,
\end{eqnarray*}
In the case  the proportion $\rho(q)$ is $1/2$,
the four cardinalities above are equal to
$2^{2q-3}$.
\label{theorem_prime2}
\end{theorem}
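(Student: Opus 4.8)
The plan is to deduce Theorem~\ref{theorem_prime2} directly from Theorem~\ref{theorem_prime_power_r}, applied with $r = 2$ and $n = 2q$, in the same way that Theorem~\ref{theorem_prime_a} was obtained from it with $r = 1$. The entire task reduces to verifying the two hypotheses of Theorem~\ref{theorem_prime_power_r}: the existence of a nonessential element $j \in [2q-1]$ not divisible by $q$, and the existence of integers $a, b$, not divisible by $p$ and congruent modulo $2$, such that every $\beta_{2q}(S)$ is congruent to $a$ or $b$ modulo $p$.

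First I would analyze the binary expansion. Since $q = p^{t}$ is an odd prime power, the lowest bit of $q$ is $2^{0}$, so the bit positions of $2q$ are all at least $1$ and $2q$ has the same number $k$ of $1$'s as $q$. Every essential element of $[2q-1]$ is a sum of a nonempty proper subset of the powers $2^{j_{i}+1}$, each of which is at least $2$; hence every essential element is even. Therefore every odd element of $[2q-1]$ is nonessential, and in particular $j = 1$ is nonessential. As $q \geq 3$, this $j = 1$ is not divisible by $q$, so the required nonessential element always exists, crucially with no Mersenne restriction on $q$.

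For the second hypothesis I would invoke Corollary~\ref{corollary_2q_mod_p}, which gives $\beta_{2q}(S) \equiv (-1)^{|S - \{q\}|} \bmod p$; thus each $\beta_{2q}(S)$ is congruent modulo $p$ to either $a = 1$ or $b = -1$, both odd and both prime to $p$. Applying Theorem~\ref{theorem_prime_power_r} with $(a,b) = (1,-1)$ then yields the two displayed equalities, since $a + p = p + 1$ and $b + p = p - 1$.

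For the final clause I would pass from $\rho(q)$ to $\rho(2q)$: because $2q$ and $q$ share the same number of $1$'s in binary, Theorem~\ref{theorem_main} gives $\rho(2q) = \rho(q)$, so $\rho(q) = 1/2$ forces $\rho(2q) = 1/2$ and the concluding statement of Theorem~\ref{theorem_prime_power_r} (with $n = 2q$) makes all four cardinalities equal to $2^{2q-3}$. I do not expect a genuine obstacle: the proof is essentially a hypothesis check, and the one new point over Theorem~\ref{theorem_prime_a}, namely that a usable nonessential element survives even when $q$ is a Mersenne prime, is settled by the observation that doubling an odd number forces every essential element to be even, leaving $j = 1$ always available.
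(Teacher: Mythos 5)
Your proposal is correct and follows essentially the same route as the paper: invoke Corollary~\ref{corollary_2q_mod_p} to get $\beta_{2q}(S)\equiv(-1)^{|S-\{q\}|}\bmod p$, observe that $1$ is nonessential because $2q$ is even (so all essential elements are even), and apply Theorem~\ref{theorem_prime_power_r} with $(a,b)=(1,-1)$. Your explicit justification that $\rho(2q)=\rho(q)$ via Theorem~\ref{theorem_main} fills in a step the paper leaves implicit, but it is the intended argument, not a different one.
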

\begin{proof}
By Corollary~\ref{corollary_2q_mod_p},
$\beta_{2q}(S) \equiv (-1)^{|S-\{q\}|} \bmod p$.
Furthermore, since $2q$ is even,
the element $1$ is nonessential,
and Theorem~\ref{theorem_prime_power_r} applies.
\end{proof}

\section{The descent set polynomial}

A different approach to view the results 
from the previous sections is in terms of the 
{\em descent set polynomial} 
$$ 
     Q_{n}(t) = \sum_{S \subseteq [n-1]} t^{\beta_{n}(S)}.
$$
The degree of this polynomial is
the $n$th Euler number $E_{n}$.  For $n \geq 2$ the polynomial is
divisible by $2t$.  Theorems~\ref{theorem_main},
\ref{theorem_2_power}, \ref{theorem_prime_a} 
and~\ref{theorem_prime2} can be reformulated as follows.
\begin{theorem}
\begin{itemize}
\item[(i)]
For a positive integer $n$ we have
$Q_{n}(-1) = 2^{n} \cdot (1/2 - \rho(n))$.
In particular, when $n$ has two or three $1$'s in its binary expansion,
then $-1$ is a root of $Q_{n}(t)$.
\item[(ii)]
For $n = 2^{j} \geq 4$
the imaginary unit $\imagi$ is a root of $Q_{n}(t)$.
\item[(iii)]
Let $q = p^{t}$ be a prime power, where $p$ is an odd prime.
Suppose that $q$ has $k$ $1$'s in its binary expansion
and satisfies $q > 2^{k} - 1$.
Let $\zeta$ be a primitive $2p$-th root of unity.
Then
$$      Q_{q}(\zeta)
    =
       2^{q} \cdot \Rere(\zeta) \cdot \left(\rho(q) - \frac{1}{2}\right)  , $$
where $\Rere(\zeta)$ denotes the real part of $\zeta$.
\item[(iv)]
Let $q = p^{t}$ be a prime power, where $p$ is an odd prime.
Suppose that $q$ has $k$ $1$'s in its binary expansion.
Let $\zeta$ be a primitive $2p$-th root of unity.
Then
$$     Q_{2q}(\zeta)
    =
      2^{2q} \cdot \Rere(\zeta) \cdot \left(\rho(q) - \frac{1}{2}\right)  .$$
\end{itemize}
\label{theorem_Q}
\end{theorem}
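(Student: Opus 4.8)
The unified strategy is to substitute the relevant root of unity into the definition $Q_{n}(t) = \sum_{S \subseteq [n-1]} t^{\beta_{n}(S)}$ and group the summands according to the residue of $\beta_{n}(S)$ modulo the order of the root; each part then reduces to invoking the distributional theorem that it reformulates. For part~(i) I would set $t = -1$, so that $(-1)^{\beta_{n}(S)}$ weights the sets with $\beta_{n}(S)$ even by $+1$ and those with $\beta_{n}(S)$ odd by $-1$. Writing the number of odd entries as $\rho(n) \cdot 2^{n-1}$ and the number of even entries as $(1-\rho(n)) \cdot 2^{n-1}$ gives $Q_{n}(-1) = 2^{n-1}(1 - 2\rho(n)) = 2^{n}(1/2 - \rho(n))$ at once. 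The vanishing when $n$ has two or three $1$'s in binary then follows from Theorem~\ref{theorem_main} together with the values $\rho(n) = 1/2$ recorded in Table~\ref{table_one}.

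For part~(ii), with $n = 2^{j} \geq 4$ Theorem~\ref{theorem_main} forces every $\beta_{n}(S)$ to be odd, so $\imagi^{\beta_{n}(S)}$ equals $\imagi$ when $\beta_{n}(S) \equiv 1 \bmod 4$ and equals $-\imagi$ when $\beta_{n}(S) \equiv 3 \bmod 4$. Theorem~\ref{theorem_2_power} says these two classes each contain exactly half of the $2^{n-1}$ subsets, so the two contributions cancel and $Q_{n}(\imagi) = 0$.

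Parts~(iii) and~(iv) run in parallel and carry the substantive content. In both cases the earlier corollaries give $\beta(S) \equiv \pm 1 \bmod p$, so by the Chinese remainder theorem $\beta(S) \bmod 2p$ lies in $\{1, -1, p-1, p+1\}$; let $N_{1}, N_{-1}, N_{p-1}, N_{p+1}$ count the subsets in each class. Using $\zeta^{p} = -1$, hence $\zeta^{p \pm 1} = -\zeta^{\pm 1}$, the substitution $t = \zeta$ collapses to
$$ Q(\zeta) = (N_{1} - N_{p+1}) \zeta + (N_{-1} - N_{p-1}) \zeta^{-1} . $$
The equalities $N_{1} = N_{-1}$ and $N_{p-1} = N_{p+1}$ furnished by Theorem~\ref{theorem_prime_a} (for part~(iii)) and Theorem~\ref{theorem_prime2} (for part~(iv)) make the two coefficients equal, so $Q(\zeta) = (N_{1} - N_{p+1})(\zeta + \zeta^{-1}) = 2(N_{1} - N_{p+1}) \cdot \Rere(\zeta)$.

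It remains to express $N_{1} - N_{p+1}$ in terms of $\rho$. Since $p$ is odd, the residues $\pm 1 \bmod 2p$ are odd while $p \pm 1 \bmod 2p$ are even, so the odd entries number $N_{1} + N_{-1} = 2 N_{1}$ and the even entries number $2 N_{p+1}$; solving against the total $2^{n-1}$ yields $N_{1} - N_{p+1} = 2^{n-1}(\rho(n) - 1/2)$. For part~(iii) the choice $n = q$ gives the stated formula directly, while for part~(iv) the choice $n = 2q$ produces $2^{2q} \cdot \Rere(\zeta) \cdot (\rho(2q) - 1/2)$, and I would finish by observing that $2q$ has the same number of binary $1$'s as $q$, so $\rho(2q) = \rho(q)$ by Theorem~\ref{theorem_main}. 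The one place demanding care — the main obstacle — is precisely this final parity bookkeeping: the odd/even split of the four residue classes modulo $2p$ genuinely relies on $p$ being odd, and the equal halves guaranteed by the distributional theorems must be matched correctly against the $\rho$-weighted counts. Everything else is a direct unwinding of definitions.
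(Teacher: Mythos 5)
Your proposal is correct and takes essentially the same route as the paper, which states Theorem~\ref{theorem_Q} without a separate proof precisely because it is the direct reformulation of Theorems~\ref{theorem_main}, \ref{theorem_2_power}, \ref{theorem_prime_a} and~\ref{theorem_prime2} that you carry out: substitute the root of unity, split the subsets by residue class modulo $2$, $4$, or $2p$, and match the class sizes against $\rho$. All the details you fill in — the Chinese remainder bookkeeping, $\zeta+\zeta^{-1}=2\,\Rere(\zeta)$, and the observation $\rho(2q)=\rho(q)$ in part~(iv) — are sound.
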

It is curious to observe that the polynomial $Q_{n}(t)$
quite often has
zeroes occurring at roots of unity.
An equivalent formulation is that
$Q_{n}(t)$ often has
cyclotomic polynomials
$\Phi_{k}(t)$
as factors.
(Recall that the {\em cyclotomic polynomial}
$\Phi_{k}(t)$ is defined as the product
$\prod_{\zeta} (t - \zeta)$,
where $\zeta$ ranges over all primitive $k$th roots of unity.)
See Table~\ref{table_P}
for the cyclotomic factors of $Q_{n}(t)$ for $n \leq 23$.

\begin{lemma}
Let $q$ be an odd prime power. Then the cyclotomic polynomial
$\Phi_{q}$ does not divide the descent set polynomial $Q_{n}(t)$.
\end{lemma}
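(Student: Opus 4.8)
The plan is to evaluate everything at $t = 1$, where the descent set polynomial collapses to a mere count of subsets. First I would compute $Q_{n}(1)$ directly from the definition: each summand $t^{\beta_{n}(S)}$ equals $1$ at $t = 1$, so
$$ Q_{n}(1) = \sum_{S \subseteq [n-1]} 1 = 2^{n-1}. $$
This is the only input about $Q_{n}$ that the argument needs; in particular I would not invoke any of the earlier divisibility results.

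Next I would record the value of the cyclotomic polynomial at $1$ for a prime power. Since $q = p^{t}$, we have $\Phi_{q}(t) = \Phi_{p}(t^{p^{t-1}}) = \sum_{i=0}^{p-1} t^{i \cdot p^{t-1}}$, a sum of exactly $p$ monomials, so $\Phi_{q}(1) = p$. With both evaluations in hand the contradiction is immediate: suppose $\Phi_{q}$ divides $Q_{n}$. As $\Phi_{q}$ is monic with integer coefficients, the quotient $h(t) = Q_{n}(t)/\Phi_{q}(t)$ lies in $\Zzz[t]$, so $h(1) \in \Zzz$. Evaluating the factorization $Q_{n}(t) = \Phi_{q}(t) \cdot h(t)$ at $t = 1$ gives $2^{n-1} = p \cdot h(1)$, forcing the odd prime $p$ to divide $2^{n-1}$. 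This is impossible, and the lemma follows.

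There is essentially no obstacle to overcome; the whole argument rests on the two evaluations $Q_{n}(1) = 2^{n-1}$ and $\Phi_{q}(1) = p$, together with the observation that an odd prime cannot divide a power of two. The single point deserving a line of justification is the value $\Phi_{p^{t}}(1) = p$, which I would either cite as standard or read off from the displayed expansion of $\Phi_{p^{t}}$. An equally short alternative is to reduce coefficients modulo $p$: there $\Phi_{q}(t) \equiv (t-1)^{\phi(q)} \bmod p$ with $\phi(q) \geq 1$, so a factorization of $Q_{n}$ by $\Phi_{q}$ would force $(t-1) \mid Q_{n}(t)$ in $\mathbb{F}_{p}[t]$ and hence $Q_{n}(1) \equiv 0 \bmod p$, again contradicting $Q_{n}(1) = 2^{n-1} \not\equiv 0 \bmod p$.
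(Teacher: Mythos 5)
Your proposal is correct and follows essentially the same argument as the paper: the paper's proof likewise evaluates at $t=1$, using $\Phi_{q}(1)=p$ and $Q_{n}(1)=2^{n-1}$ to conclude that the odd prime $p$ cannot divide a power of $2$. Your additional justification of $\Phi_{p^{t}}(1)=p$ and the alternative argument modulo $p$ are fine but not needed beyond what the paper does.
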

\begin{proof}
If $q$ is a power of an odd prime $p$, then $\Phi_q(1)=p$.
Since $Q_{n}(1) = 2^{n-1}$ has no odd factors, the lemma follows.
\end{proof}

\begin{lemma}
Let $q$ be the odd prime power $p^{t}$. Then
\begin{itemize}
\item[(i)]
If $n = 2^{j}$ then the cyclotomic polynomial
$\Phi_{2 q}$ does not divide $Q_{n}(t)$.
\item[(ii)]
If $n$ has four $1$'s in its binary expansion
and $p \geq 5$ then the cyclotomic polynomial
$\Phi_{2 q}$ does not divide $Q_{n}(t)$.
\item[(iii)]
If $n$ has five $1$'s in its binary expansion
and $p \geq 11$ then the cyclotomic polynomial
$\Phi_{2 q}$ does not divide $Q_{n}(t)$.
\end{itemize}
\end{lemma}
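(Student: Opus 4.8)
The plan is to transfer the question of divisibility by $\Phi_{2q}$ into a congruence condition on the single number $Q_{n}(-1)$, which is already controlled by the proportion $\rho(n)$.

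First I would fix an odd prime power $q = p^{t}$ and a primitive $2q$-th root of unity $\zeta$; since $\Phi_{2q}$ is irreducible over $\mathbb{Q}$, the polynomial $\Phi_{2q}$ divides $Q_{n}(t)$ exactly when $Q_{n}(\zeta) = 0$. The crucial step is to reduce modulo the prime above $p$. Here I would note that $\xi = -\zeta$ is a primitive $p^{t}$-th root of unity (because $\zeta$ has order $2p^{t}$, so $\zeta^{p^{t}} = -1$, whence $(-\zeta)^{p^{t}} = 1$ and the order is readily checked to be exactly $p^{t}$). Thus $\Zzz[\zeta] = \Zzz[\xi]$ is the ring of integers of $\mathbb{Q}(\zeta_{p^{t}})$, in which $p$ is totally ramified with prime $\mathfrak{p} = (1-\xi)$, residue field $\mathbb{F}_{p}$, and $\xi \equiv 1 \bmod \mathfrak{p}$. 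Then $\zeta = -\xi \equiv -1 \bmod \mathfrak{p}$, and reducing the defining sum termwise gives
$$
Q_{n}(\zeta) = \sum_{S} \zeta^{\beta_{n}(S)} \equiv \sum_{S} (-1)^{\beta_{n}(S)} = Q_{n}(-1) \bmod \mathfrak{p}.
$$
Hence $p \nmid Q_{n}(-1)$ would force $Q_{n}(\zeta) \neq 0$, so $\Phi_{2q} \nmid Q_{n}(t)$, and the whole lemma reduces to showing that $p$ does not divide $Q_{n}(-1)$ in each case.

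Next I would evaluate $Q_{n}(-1)$. By Theorem~\ref{theorem_Q}(i) we have $Q_{n}(-1) = 2^{n}(1/2 - \rho(n))$, and by Theorem~\ref{theorem_main} the value $\rho(n)$, and therefore the odd part of $Q_{n}(-1)$, depends only on the number $k$ of $1$'s in the binary expansion of $n$, with the relevant values recorded in Table~\ref{table_one}. For $n = 2^{j}$ (case (i)) we have $k = 1$ and $Q_{n}(-1) = -2^{n-1}$, which has no odd prime factor, so no odd prime $p$ divides it. For four $1$'s (case (ii)) the table gives $1/2 - \rho(n) = 3/2^{6}$, so $Q_{n}(-1) = 3 \cdot 2^{n-6}$ has odd part $3$, which $p$ avoids precisely when $p \geq 5$. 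For five $1$'s (case (iii)) it gives $1/2 - \rho(n) = 3 \cdot 5 \cdot 7/2^{13}$, so $Q_{n}(-1) = 105 \cdot 2^{n-13}$ has odd part $105 = 3 \cdot 5 \cdot 7$, avoided precisely when $p \geq 11$. In each instance $p \nmid Q_{n}(-1)$, so the congruence above finishes the argument.

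I expect the main obstacle to be locating the reduction itself, that is, recognizing that $\zeta \equiv -1$ modulo the totally ramified prime $\mathfrak{p}$ over $p$; this is exactly what collapses a divisibility question about the $2p^{t}$-th cyclotomic polynomial into the single evaluation $Q_{n}(-1) \bmod p$. Everything after that is routine bookkeeping with Table~\ref{table_one}. As a consistency check, the thresholds $p \geq 5$ and $p \geq 11$ are precisely the primes missing from the odd parts $3$ and $105$, while the absence of any claim for two or three $1$'s is explained by the fact that there $\rho(n) = 1/2$, so $Q_{n}(-1) = 0$ and the method yields no information.
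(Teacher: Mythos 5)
Your proof is correct, and its second half is exactly the paper's: reduce to showing $p \nmid Q_{n}(-1)$, evaluate $Q_{n}(-1) = 2^{n}(1/2-\rho(n))$ via Table~\ref{table_one}, and check that the odd parts ($1$, $3$, and $105$ in the three cases) avoid $p$ under the stated hypotheses. Where you genuinely differ is in how the reduction itself is justified. The paper gets ``$\Phi_{2q} \mid Q_{n}$ implies $p \mid Q_{n}(-1)$'' in one elementary line: $\Phi_{2q}$ is monic with integer coefficients, so the quotient $Q_{n}/\Phi_{2q}$ lies in $\mathbb{Z}[t]$, and evaluating the factorization at $t=-1$ shows that $\Phi_{2q}(-1)=p$ divides the integer $Q_{n}(-1)$. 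You instead pass to the ring $\mathbb{Z}[\zeta]$, observe that $-\zeta$ is a primitive $q$-th root of unity, and invoke the total ramification of $p$ in $\mathbb{Q}(\zeta_{q})$ to obtain $\zeta \equiv -1 \bmod \mathfrak{p} = (1+\zeta)$, whence $Q_{n}(\zeta) \equiv Q_{n}(-1) \bmod \mathfrak{p}$. Both arguments are valid; yours uses standard but heavier machinery (the ring of integers of a cyclotomic field and the splitting of $p$ in it), while the paper needs only the evaluation $\Phi_{2q}(-1) = \Phi_{q}(1) = p$. What your route buys is a slightly stronger statement --- a congruence controlling $Q_{n}(\zeta)$ modulo $\mathfrak{p}$ whether or not it vanishes --- but for the lemma at hand the elementary evaluation suffices, and your closing sanity checks (the thresholds $p \geq 5$, $p \geq 11$, and the vacuity of the method when $\rho(n)=1/2$) match the paper's intent precisely.
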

\begin{proof}
We have $\Phi_{2q}(-1)=p$.
Since $Q_{n}(-1) = 2^{n} \cdot (1/2 - \rho(n))$
the result follows by consulting Table~\ref{table_one}.
\end{proof}

\section{Quadratic factors in the descent set polynomial}
\label{section_quadratic}

In order to study the double root behavior of the
descent set polynomial
$Q_{n}(t)$ or, equivalently, quadratic factors in $Q_{n}(t)$,
we need to prove a few identities for the descent set statistics.
We begin  by introducing the multivariate $\ab$- and $\cd$-indexes.
Let $\av_{1}, \av_{2}, \ldots$
and $\bv_{1}, \bv_{2}, \ldots$ be non-commutative variables.
For $S \subseteq [n-1]$
let $u_{S}$ be the monomial $u_{1} u_{2} \cdots u_{n-1}$
where $u_{i} = \av_{i}$ if $i \not\in S$
and $u_{i} = \bv_{i}$ if $i \in S$.
The {\em multivariate $\ab$-index} of a poset~$P$ of rank $n$
is defined as the sum
$$    \Psi(P) = \sum_{S} h_{S} \cdot u_{S},     $$
where $S$ ranges over all subsets of $[n-1]$.

\begin{lemma}\label{multivariate_cd}
For an Eulerian poset $P$ the multivariate $\ab$-index can
be written in terms of the non-commutative
variables
$\cv_{i} = \av_{i} + \bv_{i}$
and $\dv_{i,i+1} = \av_{i}\bv_{i+1} + \bv_{i}\av_{i+1}$.
\end{lemma}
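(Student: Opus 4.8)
The plan is to deduce the multivariate statement directly from the univariate Bayer--Klapper theorem, which guarantees that for an Eulerian poset the ordinary $\ab$-index $\sum_{S} h_{S} \tilde{u}_{S}$ (where $\tilde{u}_{S}$ is the un-subscripted word in $\av,\bv$) is a polynomial in $\cv = \av+\bv$ and $\dv = \av\bv+\bv\av$; this is precisely the content of the generalized Dehn--Sommerville relations satisfied by the flag $h$-vector of an Eulerian poset~\cite{Bayer_Klapper,Bayer_Billera}. The crucial observation is that the multivariate and univariate $\ab$-indices of $P$ record exactly the same integers $h_{S}$, differing only in that each letter now carries its position as a subscript. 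Consequently no new linear algebra is required, and the problem reduces entirely to matching up the two alphabets.

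First I would introduce the \emph{position-forgetting map} $\phi$, the linear map sending each multivariate monomial $u_{1} \cdots u_{n-1}$ with $u_{i} \in \{\av_{i},\bv_{i}\}$ to the univariate word obtained by deleting all subscripts. On each homogeneous component $\phi$ is a bijection between the two sets of $2^{n-1}$ monomials, hence a linear isomorphism, and by construction it carries $\Psi(P) = \sum_{S} h_{S} u_{S}$ to the univariate $\ab$-index $\sum_{S} h_{S} \tilde{u}_{S}$. Next I would check that $\phi$ intertwines the two families of distinguished variables. A multivariate $\cd$-word of degree $n-1$ is a tiling of the positions $1, \ldots, n-1$ by monomers $\cv_{i}$ and adjacent dimers $\dv_{i,i+1}$; expanding $\cv_{i} = \av_{i}+\bv_{i}$ and $\dv_{i,i+1} = \av_{i}\bv_{i+1}+\bv_{i}\av_{i+1}$ and then applying $\phi$ yields exactly the expansion of the univariate $\cd$-word with the same tile pattern, since $\phi(\cv_{i}) = \cv$, $\phi(\dv_{i,i+1}) = \dv$, and the left-to-right order of the tiles is preserved. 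Thus $\phi$ sends each multivariate $\cd$-monomial to the corresponding univariate $\cd$-monomial.

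The proof then closes formally: by Bayer--Klapper we may write $\sum_{S} h_{S} \tilde{u}_{S} = \sum_{w} c_{w} \, w$ as a univariate $\cd$-polynomial, where $w$ ranges over univariate $\cd$-monomials; applying $\phi^{-1}$ and using the intertwining property gives $\Psi(P) = \sum_{w} c_{w} \, \tilde{w}$, a polynomial in the variables $\cv_{i}$ and $\dv_{i,i+1}$, as claimed. The only real content, and the step I would be most careful with, is the compatibility of $\phi$ with the change of variables and the verification that tile order translates faithfully into word order; everything else is bookkeeping, and the injectivity of $\phi$ supplies the final conclusion. The Eulerian hypothesis enters exactly once, through the univariate result, which makes clear why it is needed.

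Should one prefer not to invoke Bayer--Klapper as a black box, an alternative is to rerun its inductive elimination of the $\av\av$ and $\bv\bv$ patterns directly in the subscripted alphabet, using the Dehn--Sommerville relations among the $h_{S}$ at each stage; this produces the same normal form but is longer, whereas the reduction above isolates precisely where the Eulerian condition is used.
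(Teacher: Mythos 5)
Your proof is correct and is essentially the paper's own argument: the paper likewise observes that adding the position index $i$ to the $i$th letter gives a natural bijection between the regular and multivariate $\ab$-indices, thereby reducing the claim to the univariate Bayer--Klapper theorem. Your write-up merely makes explicit the intertwining of the substitutions $\cv_{i} = \av_{i}+\bv_{i}$, $\dv_{i,i+1} = \av_{i}\bv_{i+1}+\bv_{i}\av_{i+1}$ with $\cv = \av+\bv$, $\dv = \av\bv+\bv\av$ under this bijection, a detail the paper leaves implicit.
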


\begin{proof}
Observe that by adding the index $i$ to the $i$th letter in an $\ab$-monomial
of degree $n-1$, we obtain a natural bijection between the regular
and the multivariate $\ab$-indices of the same poset $P$. Thus the statement
of the lemma is equivalent to the statement that the regular $\ab$-index of
$P$ can be expressed in terms of the variables $\cv = \av+\bv$ and $\dv
= \av\bv + \bv\av$.
\end{proof}

\noindent
In this case,
we call the resulting polynomial
the
{\em multivariate $\cd$-index}.
Observe that for a rank~$n$ Eulerian poset
each of the indices $1$ through $n$ appears in
each monomial of the multivariate $\cd$-index.

\begin{proposition}
Let $h_{S}$ be the flag $h$-vector of an Eulerian poset $P$ of rank~$n$,
or more generally, $h_{S}$ belongs to
the generalized Dehn-Sommerville subspace.
Let $T \subseteq [n-1]$ such that
$T$ contains an interval
$[s,t] = \{s, s+1, \ldots, t\}$
of odd cardinality
with $s-1, t+1 \not\in T$.
Then 
$$ \sum_{S\subseteq [n-1]}
     (-1)^{|S \cap T|} \cdot h_{S}
  =
    0    .  $$
\label{proposition_cd}
\end{proposition}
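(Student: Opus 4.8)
The plan is to convert the alternating sum into a single evaluation of the multivariate $\ab$-index and then to exploit the $\cd$-structure guaranteed by Lemma~\ref{multivariate_cd}. I would introduce the $\Zzz$-algebra homomorphism $\phi \colon \Zzz\langle \av_1, \bv_1, \av_2, \bv_2, \ldots \rangle \longrightarrow \Zzz$ determined on generators by $\phi(\av_i) = 1$ for all $i$, by $\phi(\bv_i) = 1$ for $i \notin T$, and by $\phi(\bv_i) = -1$ for $i \in T$. Applying $\phi$ to a monomial $u_S$ contributes a factor $-1$ for each index in $S \cap T$ and a factor $1$ otherwise, so that $\phi(u_S) = (-1)^{|S \cap T|}$; hence $\phi(\Psi(P)) = \sum_{S \subseteq [n-1]} (-1)^{|S \cap T|} \cdot h_S$ is precisely the quantity to be shown to vanish. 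Since the argument uses only the existence of a $\cd$-expression for $\Psi(P)$, it will apply verbatim to any flag $h$-vector in the generalized Dehn-Sommerville subspace.

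Next I would record the action of $\phi$ on the $\cd$-variables: one has $\phi(\cv_i) = 1 + \phi(\bv_i)$, equal to $2$ if $i \notin T$ and to $0$ if $i \in T$; and $\phi(\dv_{i,i+1}) = \phi(\bv_i) + \phi(\bv_{i+1})$, equal to $\pm 2$ if $i$ and $i+1$ lie on the same side of $T$ and to $0$ if exactly one of them lies in $T$. By Lemma~\ref{multivariate_cd}, $\Psi(P)$ is a $\Zzz$-linear combination of monomials in the $\cv_i$ and $\dv_{i,i+1}$, and reading such a monomial from left to right each $\cv$ consumes the next single index while each $\dv$ consumes the next two consecutive indices; thus a multivariate $\cd$-monomial is the same datum as a tiling of the interval $\{1, \ldots, n-1\}$ by single cells (the $\cv$'s) and dominoes (the $\dv$'s). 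Because $\phi$ is multiplicative, it then suffices to show that $\phi$ annihilates every such monomial.

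The combinatorial core of the proof concerns the distinguished odd-cardinality interval $[s,t] \subseteq T$ with $s-1, t+1 \notin T$. I would argue by contradiction: suppose a monomial $M$ satisfies $\phi(M) \neq 0$. Then no factor of $M$ is killed, which by the computation above forces two conditions on the tiling associated to $M$: no index of $T$ may be a single cell (as $\phi(\cv_i) = 0$ for $i \in T$), and no domino may straddle the boundary of $T$ (as such a domino is killed). The first condition shows each of $s, s+1, \ldots, t$ is covered by a domino, and the second shows that domino has both endpoints in $T$. Since $s - 1 \notin T$, the cell $s$ cannot be paired leftward, so it is paired with $s+1$; iterating, $s+2$ pairs with $s+3$, and so on, and since $t+1 \notin T$ no domino escapes to the right. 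Hence the factors of $M$ restrict to a perfect matching of $\{s, \ldots, t\}$ by consecutive dominoes. This is impossible because $t - s + 1$ is odd, the desired contradiction; therefore $\phi$ kills every monomial and $\phi(\Psi(P)) = 0$.

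I expect the one delicate point to be the bookkeeping in the middle step: making rigorous that a surviving monomial induces a genuine domino tiling of the odd-length interval $[s,t]$ with no leakage across its endpoints, which is exactly where the hypotheses $s-1, t+1 \notin T$ are used. Once that correspondence is pinned down, the parity obstruction from the odd cardinality of $[s,t]$ finishes the proof immediately.
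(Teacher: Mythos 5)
Your proposal is correct and follows essentially the same route as the paper: your homomorphism $\phi$ is exactly the paper's substitution $\av_{i}=1$, $\bv_{i}=-1$ for $i\in T$ and $\bv_{i}=1$ otherwise, and your domino-tiling contradiction is the paper's observation that $\cv_{i}=0$ for $i\in[s,t]$ and $\dv_{s-1,s}=\dv_{t,t+1}=0$, so that by parity any surviving multivariate $\cd$-monomial would have to tile the odd-length interval $[s,t]$ by dominoes, which is impossible. The only cosmetic difference is that you spell out the tiling bookkeeping that the paper leaves implicit.
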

\begin{proof}
The sum is obtained from the multivariate $\ab$-index
of the poset $P$ by setting
$\av_{i} = 1$ and
$$  \bv_{i} = \left\{
                \begin{array}{r l}
                   -1 & \mbox{ if } i \in T, \\
                    1 & \mbox{ otherwise. }
                \end{array}
             \right.    $$
Notice that
$\cv_{i} = 0$ for $i \in [s,t]$
and
that $\dv_{s-1,s} = \dv_{t,t+1} = 0$.
If $s=1$ we set $\dv_{0,1} = 0$,
and if $t=n-1$ we set $\dv_{n-1,n} = 0$.
Since $P$ is Eulerian,
the multivariate $\ab$-index can be written
in terms of multivariate $\cd$-monomials (Lemma~\ref{multivariate_cd}).
A multivariate $\cd$-monomial that contains
$\dv_{s-1,s}$ or $\dv_{t,t+1}$ evaluates to zero.
Since the interval $[s,t]$ has odd size,
a multivariate $\cd$-monomial not containing
$\dv_{s-1,s}$ and $\dv_{t,t+1}$
must
contain at least one variable $\cv_{i}$
with $i \in [s,t]$.
Hence this monomial also evaluates to zero.
\end{proof}

Observe that the identity in Proposition~\ref{proposition_cd}
is a part of the generalized Dehn-Sommerville relations;
see~\cite{Bayer_Billera}.

\begin{theorem}
If the binary expansion of $n$ has two $1$'s and $n > 3$, then
$\Phi_{2}^{2}$ divides~$Q_{n}(t)$.
\end{theorem}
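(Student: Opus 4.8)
The plan is to show that $-1$ is a \emph{double} root of $Q_n(t)$, which is equivalent to $\Phi_2^2 = (t+1)^2$ dividing $Q_n(t)$. Since $\rho(n) = 1/2$ whenever $n$ has exactly two $1$'s in its binary expansion, part~(i) of Theorem~\ref{theorem_Q} already gives $Q_n(-1) = 2^n \cdot (1/2 - \rho(n)) = 0$, so it remains only to verify that the derivative vanishes at $-1$. Differentiating $Q_n(t) = \sum_{S \subseteq [n-1]} t^{\beta_n(S)}$ termwise gives
$$
   Q_n'(-1) = \sum_{S \subseteq [n-1]} \beta_n(S) \cdot (-1)^{\beta_n(S) - 1},
$$
so the entire problem reduces to proving that this signed sum is zero.

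The key step is to rewrite the sign $(-1)^{\beta_n(S) - 1}$ as a \emph{linear} character in $S$. Write $n = 2^{j_1} + 2^{j_2}$ with $j_1 > j_2 \geq 0$; the two essential elements of $[n-1]$ are then $e_1 = 2^{j_2}$ and $e_2 = 2^{j_1}$. By Lemma~\ref{lemma_nonessential_element} the parity of $\beta_n(S)$ is unaffected by the nonessential elements of $S$, and by Lemma~\ref{lemma_beta} (with $k = 2$) it agrees with the parity of $\beta_3(\widehat{S})$, where $\widehat{S}$ records which of $e_1, e_2$ lie in $S$. Reading off the values $\beta_3(\varnothing) = \beta_3(\{1,2\}) = 1$ and $\beta_3(\{1\}) = \beta_3(\{2\}) = 2$, we find that $\beta_n(S)$ is odd exactly when $S$ contains neither or both of $e_1, e_2$. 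Consequently
$$
   (-1)^{\beta_n(S) - 1} = (-1)^{|S \cap \{e_1, e_2\}|},
$$
and therefore $Q_n'(-1) = \sum_S (-1)^{|S \cap T|} \cdot \beta_n(S)$ with $T = \{e_1, e_2\}$.

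To finish, I would invoke Proposition~\ref{proposition_cd} applied to the Boolean algebra $B_n$, whose flag $h$-vector is $h_S = \beta_n(S)$ and which is Eulerian. Taking the odd-cardinality interval to be the singleton $\{e_1\} \subseteq T$, the hypotheses require $e_1 - 1, e_1 + 1 \notin T$; since $e_1 - 1 = 2^{j_2}-1 < e_2$, the only way these could fail is $e_1 + 1 = e_2$, i.e. $2^{j_2} + 1 = 2^{j_1}$, which forces $n = 3$. Because we assume $n > 3$, the interval condition holds, Proposition~\ref{proposition_cd} yields $\sum_S (-1)^{|S \cap T|} \beta_n(S) = 0$, and hence $Q_n'(-1) = 0$, completing the proof.

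The main obstacle --- and the one genuinely clever point --- is the middle step: recognizing that the nonlinear sign $(-1)^{\beta_n(S)}$ coincides, up to a global sign, with the linear character $(-1)^{|S \cap \{e_1,e_2\}|}$. This is precisely what converts the derivative condition into an instance of the generalized Dehn--Sommerville relations of Proposition~\ref{proposition_cd}; everything else is routine parity bookkeeping together with the check that the interval hypothesis survives exactly when $n > 3$.
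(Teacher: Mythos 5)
Your proof is correct and takes essentially the same route as the paper's: both convert $Q_n'(-1)$ into the signed sum $\sum_{S} (-1)^{|S \cap \{2^{j_1}, 2^{j_2}\}|} \cdot \beta_n(S)$ using the mod-$2$ pattern of $\beta_n(S)$ determined by the two essential elements, and then annihilate it with Proposition~\ref{proposition_cd}. Your explicit verification of the odd-interval hypothesis of that proposition (which is exactly where $n > 3$ enters) and your separate check that $Q_n(-1) = 0$ merely spell out details the paper leaves implicit.
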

\begin{proof}
Suppose that $n = m_{1} + m_{2}$, where
$m_{1} = 2^{j_{1}}$,
$m_{2} = 2^{j_{2}}$, and $j_{1} > j_{2}$.
From the proof of Theorem~\ref{theorem_knapsack} we have
$$ \beta_{n}(S)
    \equiv
  \left\{
      \begin{array}{c c l}
         1 & {} \bmod 2 & \mbox{ if } |S \cap \{m_{1}, m_{2}\}| = 0, 2 , \\
         0 & {} \bmod 2 & \mbox{ if } |S \cap \{m_{1}, m_{2}\}| = 1 .
      \end{array}
  \right.
$$
Hence
\begin{eqnarray*}
   Q_{n}^{\prime}(-1)
 & = &
   \sum_{S\subseteq [n-1]} \beta_{n}(S) \cdot (-1)^{\beta_{n}(S)-1}  \\
 & = &
   \sum_{S\subseteq [n-1]}
         (-1)^{|S \cap \{m_{1}, m_{2}\}|}
       \cdot
         \beta_{n}(S),
\end{eqnarray*}
which is zero by
Proposition~\ref{proposition_cd}.
\end{proof}

\begin{theorem}
If $n = 2^{j} \geq 4$ then
$\Phi_{4}^{2}$ divides $Q_{n}(t)$.
\label{theorem_2_power_square}
\end{theorem}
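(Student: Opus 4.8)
The plan is to show that $\imagi$ is a root of $Q_{n}(t)$ of multiplicity at least two. Since $\Phi_{4}(t) = t^{2}+1$ has roots $\pm\imagi$ and $Q_{n}(t)$ has integer, hence real, coefficients, complex roots occur in conjugate pairs of equal multiplicity; thus once $\imagi$ is shown to be a double root, so is $-\imagi$, and therefore $(t-\imagi)^{2}(t+\imagi)^{2} = \Phi_{4}^{2}$ divides $Q_{n}(t)$. That $\imagi$ is a simple root is already part~(ii) of Theorem~\ref{theorem_Q} (equivalently Theorem~\ref{theorem_2_power}), so the whole content of the proof is to verify that $Q_{n}^{\prime}(\imagi) = 0$.

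First I would differentiate $Q_{n}(t) = \sum_{S} t^{\beta_{n}(S)}$ term by term, obtaining
$$ Q_{n}^{\prime}(\imagi) = \sum_{S \subseteq [n-1]} \beta_{n}(S) \cdot \imagi^{\,\beta_{n}(S)-1} . $$
The key input is the congruence $\beta_{n}(S) \equiv (-1)^{|S-\{n/2\}|} \bmod 4$ established inside the proof of Theorem~\ref{theorem_2_power}. Since $\imagi^{\,\beta_{n}(S)-1}$ depends only on $\beta_{n}(S) \bmod 4$, this congruence gives $\imagi^{\,\beta_{n}(S)-1} = (-1)^{|S-\{n/2\}|}$: when $|S-\{n/2\}|$ is even the exponent is $\equiv 0 \bmod 4$ and the power equals $1$, and when it is odd the exponent is $\equiv 2 \bmod 4$ and the power equals $-1$. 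Because $S \subseteq [n-1]$, we have $|S-\{n/2\}| = |S \cap T|$ with $T = [n-1]\setminus\{n/2\}$, so that
$$ Q_{n}^{\prime}(\imagi) = \sum_{S \subseteq [n-1]} (-1)^{|S \cap T|} \cdot \beta_{n}(S) . $$

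To finish I would invoke Proposition~\ref{proposition_cd} for the Boolean algebra $B_{n}$, which is Eulerian and whose flag $h$-vector is $h_{S} = \beta_{n}(S)$. It remains to exhibit inside $T = \{1,\dots,n/2-1\}\cup\{n/2+1,\dots,n-1\}$ an interval of odd cardinality whose immediate neighbours lie outside $T$. The interval $[1, n/2-1]$ works: its left neighbour $0$ is not in $[n-1]$, its right neighbour $n/2$ has been removed from $T$, and its cardinality $n/2 - 1 = 2^{j-1}-1$ is odd precisely because $j \geq 2$ forces $2^{j-1}$ to be even. Proposition~\ref{proposition_cd} then yields $\sum_{S}(-1)^{|S\cap T|} h_{S} = 0$, i.e.\ $Q_{n}^{\prime}(\imagi)=0$, completing the argument.

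The only real subtlety, and hence the step I expect to require the most care, is the translation of the double-root condition into a vanishing alternating sum of Dehn--Sommerville type: one must correctly reduce $\imagi^{\,\beta_{n}(S)-1}$ modulo $4$ using Theorem~\ref{theorem_2_power} and then recognize the resulting sum as an instance of Proposition~\ref{proposition_cd} with the appropriate odd interval. Once the mod~$4$ description of $\beta_{n}(S)$ is in hand, the verification of the odd cardinality and of the neighbour conditions is routine, so there is no deep obstacle beyond this bookkeeping.
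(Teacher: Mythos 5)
Your proof is correct and takes essentially the same route as the paper's: the congruence $\beta_{n}(S) \equiv (-1)^{|S-\{n/2\}|} \bmod 4$ from Theorem~\ref{theorem_2_power}, term-by-term differentiation to get $Q_{n}^{\prime}(\imagi) = \sum_{S} (-1)^{|S \cap T|}\cdot \beta_{n}(S)$ with $T = [n-1]\setminus\{n/2\}$, and Proposition~\ref{proposition_cd} applied to the odd interval $[1,n/2-1]$. The only difference is that you spell out the bookkeeping the paper leaves implicit (conjugate-root symmetry and the fact that $Q_{n}(\imagi)=0$ comes from Theorem~\ref{theorem_2_power}), which is fine.
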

\begin{proof}
Let $m = n/2$.
The proof of Theorem~\ref{theorem_2_power} states that
$\beta(S) \equiv (-1)^{|S-\{m\}|} \bmod 4$.
Let
$\imagi$ be the imaginary unit, so that
$\imagi^2 = -1$.
Observe that $\imagi^{(-1)^{k}-1} = (-1)^{k}$.
We have
\begin{eqnarray*}
   Q_{n}^{\prime}(\imagi)
 & = &
   \sum_{S\subseteq [n-1]} \beta_{n}(S) \cdot \imagi^{\beta_{n}(S)-1}  \\
 & = &
   \sum_{S\subseteq [n-1]}
       (-1)^{|S-\{m\}|} \cdot \beta_{n}(S).
\end{eqnarray*}
By
Proposition~\ref{proposition_cd},
$Q_{n}^{\prime}(\imagi) = 0$,
since $S-\{m\} = S \cap \{1, \ldots, m-1,m+1, \ldots, n-1\}$
and $m-1$ is odd.
\end{proof}

The next result applies to
prime powers that have two $1$'s in their binary expansion.
The only cases known so far
are the five known Fermat primes
$3, 5, 17, 257, 65537$ and the prime power $3^{2}$.
\begin{theorem}
Let $q = p^{t}$ be a prime power, where $p$ is an odd prime
and assume that $q$ has two $1$'s in its binary expansion.
Then the cyclotomic polynomial $\Phi_{2p}^{2}$ divides~$Q_{2q}(t)$.
\label{theorem_Fermat}
\end{theorem}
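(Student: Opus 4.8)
The plan is to follow the template used for $\Phi_2^2$ and $\Phi_4^2$: show that a primitive $2p$-th root of unity $\zeta$ is a \emph{double} root of $Q_{2q}(t)$. Since $q$ has two $1$'s in its binary expansion, Theorem~\ref{theorem_main} gives $\rho(q) = \rho(3) = 1/2$, so part~(iv) of Theorem~\ref{theorem_Q} already yields $Q_{2q}(\zeta) = 0$; equivalently $\Phi_{2p}$ divides $Q_{2q}$. Because $\Phi_{2p}$ is monic and irreducible over $\mathbb{Q}$ while $Q_{2q} \in \mathbb{Z}[t]$, it will suffice to prove that $\zeta$ is also a root of the derivative, i.e.\ $Q_{2q}'(\zeta) = 0$; writing $Q_{2q} = \Phi_{2p}\cdot g$ and differentiating, this forces $\Phi_{2p} \mid g$ and hence $\Phi_{2p}^2 \mid Q_{2q}$.

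Write $2q = m_1 + m_2$ with $m_1 = 2^{j_1+1}$ and $m_2 = 2^{j_2+1}$, where $q = 2^{j_1} + 2^{j_2}$ and $j_1 > j_2 \geq 0$. Differentiating gives $Q_{2q}'(\zeta) = \sum_{S \subseteq [2q-1]} \beta_{2q}(S) \cdot \zeta^{\beta_{2q}(S)-1}$, and since $\zeta^{2p}=1$ the factor $\zeta^{\beta_{2q}(S)-1}$ depends only on $\beta_{2q}(S) \bmod 2p$. I would read off this residue from two congruences: exactly as in the proof of the $\Phi_2^2$ result, $\beta_{2q}(S)$ is odd if and only if $|S \cap \{m_1,m_2\}|$ is even, while Corollary~\ref{corollary_2q_mod_p} gives $\beta_{2q}(S) \equiv (-1)^{|S-\{q\}|} \bmod p$. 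Combining the two by the Chinese remainder theorem shows $\beta_{2q}(S) \bmod 2p \in \{1,-1,p\pm 1\}$ and, using $\zeta^p = -1$, that
$$\zeta^{\beta_{2q}(S)-1} = (-1)^{|S \cap \{m_1,m_2\}|} \cdot \left[ \frac{1+\zeta^{-2}}{2} + \frac{1-\zeta^{-2}}{2}\,(-1)^{|S-\{q\}|}\right].$$

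Substituting this into the derivative splits $Q_{2q}'(\zeta)$ into two sums, with coefficients $\tfrac12(1\pm\zeta^{-2})$, each of the shape $\sum_S (-1)^{|S\cap T|}\,\beta_{2q}(S)$. The first uses $T = \{m_1,m_2\}$; the second, after reducing $|S\cap\{m_1,m_2\}| + |S-\{q\}|$ modulo $2$, uses $T = [2q-1]\setminus\{m_1,m_2,q\}$. I would then invoke Proposition~\ref{proposition_cd} (valid because $\beta_{2q}(S)$ is the flag $h$-vector of the Eulerian poset $B_{2q}$): the singleton $\{m_1\}$ is an isolated odd interval inside the first $T$ (as $m_1-m_2$ is even, hence $\neq 1$, and $m_2 < m_1$), and $[1,m_2-1]$, of odd length $2^{j_2+1}-1$, is an isolated odd interval inside the second $T$ since its right neighbor $m_2$ is deleted. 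Both sums therefore vanish, giving $Q_{2q}'(\zeta)=0$.

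The main obstacle I anticipate is the bookkeeping in the middle step: correctly translating the mod-$2$ and mod-$p$ data into the closed form for $\zeta^{\beta_{2q}(S)-1}$, and then verifying that the two resulting index sets $T$ genuinely satisfy the hypotheses of Proposition~\ref{proposition_cd}. The set $T = [2q-1]\setminus\{m_1,m_2,q\}$ in particular requires checking the relative order $m_2 < q < m_1$ and confirming that deleting these three points leaves an odd interval at the bottom of $[2q-1]$; the remaining manipulations are routine.
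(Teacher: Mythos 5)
Your proof is correct and follows essentially the same route as the paper's: combine the mod~$2$ classification of $\beta_{2q}(S)$ (from the proof of Theorem~\ref{theorem_knapsack}) with the mod~$p$ congruence of Corollary~\ref{corollary_2q_mod_p}, write $\zeta^{\beta_{2q}(S)-1}$ as a sign times a fixed linear combination, split $Q_{2q}'(\zeta)$ into two sums of the shape $\sum_{S}(-1)^{|S\cap T|}\beta_{2q}(S)$, and annihilate both with Proposition~\ref{proposition_cd}. Your coefficients $(1\pm\zeta^{-2})/2$ are exactly the paper's $\Rere(\zeta)$ and $\Imim(\zeta)\cdot\imagi$ divided by $\zeta$ (the paper evaluates $\zeta\cdot Q_{2q}'(\zeta)$ rather than $Q_{2q}'(\zeta)$), and since $q$ is odd your $m_2$ equals $2$ and your $m_1$ is the paper's $m$.

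One detail is worth flagging, because there your bookkeeping is actually the more accurate one. The paper reduces the exponent in its second sum via the congruence $|S\cap\{2,m\}|+|S-\{q\}|\equiv|S\cap\{2,q,m\}|\bmod 2$ and hence uses $T=\{2,q,m\}$; but that congruence is off by $|S|\bmod 2$ (test $S=\{1\}$). The correct reduction is the one you derive, namely the complement $T=[2q-1]\setminus\{m_1,m_2,q\}$. Fortunately both $\{2,q,m\}$ and its complement contain an isolated interval of odd cardinality ($\{2\}$, respectively your $[1,m_2-1]=\{1\}$), so Proposition~\ref{proposition_cd} applies in either case and the theorem stands; your version simply repairs this parity slip. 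Also, your explicit preliminary step --- getting $Q_{2q}(\zeta)=0$ from $\rho(q)=1/2$ via Theorem~\ref{theorem_Q}(iv), and using irreducibility of $\Phi_{2p}$ to convert $Q_{2q}(\zeta)=Q_{2q}'(\zeta)=0$ into $\Phi_{2p}^{2}\mid Q_{2q}(t)$ --- is left implicit in the paper, and spelling it out is a genuine improvement in completeness.
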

\begin{proof}
In this case $n = 2 q = m + 2$, where $m = 2^{j}$.
From the proof of Theorem~\ref{theorem_knapsack} we have
$$ \beta_{2q}(S)
    \equiv
  \left\{
      \begin{array}{c c l}
         1 & {} \bmod 2 & \mbox{ if } |S \cap \{2,m\}| = 0,2 , \\
         0 & {} \bmod 2 & \mbox{ if } |S \cap \{2,m\}| = 1 .
      \end{array}
  \right.
$$
Hence combining it with the proof of
Corollary~\ref{corollary_2q_mod_p},
we have
$$ \beta_{2q}(S)
    \equiv
  \left\{
      \begin{array}{r c l}
         (-1)^{|S - \{q\}|}
                 & {} \bmod 2p
                 & \mbox{ if } |S \cap \{2,m\}| = 0,2 , \\
         p + (-1)^{|S - \{q\}|}
                 & {} \bmod 2p
                 & \mbox{ if } |S \cap \{2,m\}| = 1 .
      \end{array}
  \right.
$$
Thus for
$\zeta = \Rere(\zeta) + \Imim(\zeta) \cdot \imagi$
a $2p$-th primitive root of unity, we have that
\begin{eqnarray*}
    \zeta^{\beta_{2q}(S)}
 & = &
    (-1)^{|S \cap \{2,m\}|} \cdot \zeta^{(-1)^{|S - \{q\}|}} \\
 & = &
    (-1)^{|S \cap \{2,m\}|}
  \cdot
    \left(\Rere(\zeta)
           +
          (-1)^{|S - \{q\}|} \cdot \Imim(\zeta) \cdot \imagi \right) .
\end{eqnarray*}
Evaluating the sum
and using the fact that
$$|S \cap \{2,m\}| + |S - \{q\}| \equiv |S \cap \{2,q,m\}| \bmod 2$$
we have
\begin{eqnarray*}
   \zeta \cdot Q_{2q}^{\prime}(\zeta)
 & = &
   \sum_{S\subseteq [2q-1]} \beta_{2q}(S) \cdot \zeta^{\beta_{2q}(S)}  \\
 & = &
   \Rere(\zeta)
      \cdot
   \sum_{S\subseteq [2q-1]}
      (-1)^{|S \cap \{2,m\}|}
        \cdot
      \beta_{2q}(S) \\
 & & +\ 
   \Imim(\zeta)
      \cdot
   \imagi
      \cdot
   \sum_{S\subseteq [2q-1]}
      (-1)^{|S \cap \{2,q,m\}|}
        \cdot
      \beta_{2q}(S) ,
\end{eqnarray*}
where both sums vanish by
Proposition~\ref{proposition_cd}.
\end{proof}

\section{The signed descent set polynomial}
\label{section_signed_descent}

Similarly to the descent set polynomial
we can define the {\em signed descent set polynomial:}
$$ Q^{\pm}_{n}(t) = \sum_{S\subseteq [n]} t^{\beta_{n}^{\pm}(S)}  ; $$
see Section~\ref{section_signed}
for definitions relevant to signed permutations.
The degree of this polynomial is the $n$th
signed Euler number~$E^{\pm}_{n}$, which is the number
of alternating signed permutations of size $n$.
Yet again, for $n \geq 1$ this polynomial is divisible by $2t$.
Theorem~\ref{theorem_signed_permutations}
can now be stated as follows.

\begin{theorem}
For $n \geq 2$
the signed descent set polynomial $Q^{\pm}_{n}(t)$
has the cyclotomic factor~$\Phi_{4}$.
\end{theorem}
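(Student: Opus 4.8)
The plan is to show directly that the primitive fourth root of unity $\imagi$ is a root of $Q^{\pm}_{n}(t)$; since $Q^{\pm}_{n}$ has integer coefficients and $\Phi_{4}(t) = t^{2}+1$ is the minimal polynomial of $\imagi$ over the rational numbers, this forces $\Phi_{4}$ to divide $Q^{\pm}_{n}(t)$.

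First I would substitute $t = \imagi$ into the definition $Q^{\pm}_{n}(t) = \sum_{S \subseteq [n]} t^{\beta^{\pm}_{n}(S)}$. Because $\imagi^{4} = 1$, the quantity $\imagi^{\beta^{\pm}_{n}(S)}$ depends only on $\beta^{\pm}_{n}(S)$ modulo $4$; explicitly $\imagi^{\beta} = \imagi$ when $\beta \equiv 1 \bmod 4$ and $\imagi^{\beta} = -\imagi$ when $\beta \equiv 3 \bmod 4$. By Lemma~\ref{lemma_zonotope_odd} every signed descent statistic is odd, so only these two residues can occur, and the evaluation reduces to counting how many $S$ fall into each residue class.

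The key input is Theorem~\ref{theorem_signed_permutations}, which for $n \geq 2$ asserts that exactly half of the $2^{n}$ values $\beta^{\pm}_{n}(S)$, as $S$ ranges over subsets of $[n]$, are congruent to $1$ modulo $4$ and the other half congruent to $3$. Grouping the sum accordingly gives
$$ Q^{\pm}_{n}(\imagi) = 2^{n-1} \cdot \imagi + 2^{n-1} \cdot (-\imagi) = 0 . $$
Thus $\imagi$ is a root of $Q^{\pm}_{n}(t)$, and the conclusion follows as indicated above.

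There is essentially no serious obstacle here: all of the real work is already carried out in Theorem~\ref{theorem_signed_permutations}, and the present statement is just the reinterpretation of that equidistribution result as a divisibility-by-cyclotomic-factor statement. The only points worth checking carefully are the elementary bookkeeping of which residue contributes $\imagi$ versus $-\imagi$, and the observation that $[n]$ has $2^{n}$ subsets so that each of the two halves has size exactly $2^{n-1}$, making the two contributions cancel.
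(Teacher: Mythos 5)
Your proposal is correct and matches the paper's approach: the paper presents this theorem as a direct reformulation of Theorem~\ref{theorem_signed_permutations} (the equidistribution of $\beta^{\pm}_{n}(S)$ between residues $1$ and $3$ modulo $4$), which is exactly the input you use. Your write-up simply makes explicit the routine steps the paper leaves implicit, namely the evaluation $Q^{\pm}_{n}(\imagi) = 2^{n-1}\imagi + 2^{n-1}\imagi^{3} = 0$ and the passage from the root $\imagi$ to the factor $\Phi_{4}(t) = t^{2}+1$ via minimality.
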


The space of quasisymmetric functions of type $B$ is defined
as $\BQSym = \Zzz[s] \tensor \QSym$.
Quasisymmetric functions of type $B$ were first defined
by Chow~\cite{Chow}.
We will view them to be functions in the variables
$s, w_{1}, w_{2}, \ldots$, that are quasisymmetric
in $w_{1}, w_{2}, \ldots$.
For a composition
$(\gamma_{0}, \ldots, \gamma_{m})$
define the monomial quasisymmetric function of type $B$
by
$$    M^{B}_{(\gamma_{0}, \gamma_{1},\ldots,\gamma_{m})}
  =
     s^{\gamma_{0}-1}
  \cdot
     M_{(\gamma_{1},\ldots,\gamma_{m})}    .   $$

A third method to encode the flag vector data of a poset $P$
of rank at least $1$
is the {\em quasisymmetric function of type $B$}
\begin{equation}
    F_{B}(P)
  =
    \sum_{c} M^{B}_{\rho(c)}  ,
\label{equation_type_B}
\end{equation}
where the sum is over all chains
$c = \{\hz = x_{0} < x_{1} < \cdots < x_{m} = \ho\}$
in the poset $P$; see~\cite{Ehrenborg_Readdy_Tchebyshev}.
A different way to write
equation~(\ref{equation_type_B}) is
\begin{equation}
    F_{B}(P)
  =
    \sum_{\hz < x \leq \ho} s^{\rho(x)-1} \cdot F([x,\ho])  .
\label{equation_type_B_version_2}
\end{equation}
The diamond product of two posets $P$ and $Q$ is
$$P \diamond Q = (P - \{\hz\}) \times (Q - \{\hz\}) \cup \{\hz\}.$$
Using identity~(\ref{equation_type_B_version_2}) one can show that
the type $B$ quasisymmetric function of a poset is
multiplicative with respect to the diamond product of posets,
that is,
$F_{B}(P \diamond Q) = F_{B}(P) \cdot F_{B}(Q)$.
Applying the bijection~$D$ between compositions
and subsets, we have 
$$
F_{B}(P) = \sum_{S \subseteq [n-1]} f_{S} \cdot M^{B}_{S},
$$
where we write $M^{B}_{S}$ instead of $M^{B}_{D(S)}$,
and the poset $P$ has rank $n$.
The {\em fundamental quasisymmetric function of type $B$,}
denoted by $L^{B}_{T}$, is given
by
$$
L^{B}_{T} = \sum_{T \subseteq S} M^{B}_{S} .
$$
Then the flag $h$-vector appears as the coefficients
in the decomposition
$$
 F_{B}(P)
  =
 \sum_{S \subseteq [n-1]} h_{S} \cdot L^{B}_{S} ,
$$
where the poset $P$ has rank $n$.

The cubical lattice $C_{n}$ has
rank $n+1$ and can obtained as a diamond power of the
Boolean algebra $B_{2}$, that is,
$C_{n} = B_{2}^{\diamond n}$.
Therefore we have the following result.

\begin{lemma}
The type $B$ quasisymmetric function
of the cubical lattice is given by
$$F_{B}(C_{n}) = (s + 2 \cdot M_{(1)})^{n}.$$
\end{lemma}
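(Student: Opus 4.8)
The plan is to combine two facts established in the excerpt: the multiplicativity of the type $B$ quasisymmetric function with respect to the diamond product, and the explicit computation of $F_B(B_2)$. Since the cubical lattice factors as $C_{n} = B_{2}^{\diamond n}$, and since $F_B(P \diamond Q) = F_B(P) \cdot F_B(Q)$, it suffices to show that $F_B(B_2) = s + 2 \cdot M_{(1)}$, after which the result follows by taking the $n$th power.

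First I would compute $F_B(B_2)$ directly from its definition. The Boolean algebra $B_2$ has rank $2$, with bottom $\hz$, top $\ho$, and two atoms $x$ of rank $1$ sitting strictly between them. Using the formula~(\ref{equation_type_B_version_2}), namely $F_{B}(P) = \sum_{\hz < x \leq \ho} s^{\rho(x)-1} \cdot F([x,\ho])$, I would sum over the elements $x$ with $\hz < x \leq \ho$. The top element $\ho$ contributes $s^{\rho(\ho)-1} \cdot F([\ho,\ho]) = s^{1} \cdot 1 = s$, since the interval $[\ho,\ho]$ is a single point whose quasisymmetric function is the empty-chain term $1$. Each of the two atoms $x$ contributes $s^{\rho(x)-1} \cdot F([x,\ho]) = s^{0} \cdot F([x,\ho])$, and since $[x,\ho]$ is a two-element chain $B_1$ with $F(B_1) = M_{(1)}$, each atom contributes $M_{(1)}$. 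Summing gives $F_B(B_2) = s + 2 \cdot M_{(1)}$.

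Finally, applying multiplicativity with respect to the diamond product $n$ times yields
$$
F_{B}(C_{n}) = F_{B}\bigl(B_{2}^{\diamond n}\bigr) = F_{B}(B_{2})^{n} = (s + 2 \cdot M_{(1)})^{n},
$$
which is exactly the claimed identity.

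I do not expect any serious obstacle here, as both ingredients are already in hand from the excerpt; the only point requiring a little care is the bookkeeping in the base-case computation of $F_B(B_2)$, in particular correctly reading off the rank function (so that $\rho(\ho)-1 = 1$ while $\rho(x)-1 = 0$ for the atoms) and recognizing that each upper interval $[x,\ho]$ is a copy of $B_1$ whose ordinary quasisymmetric function is $M_{(1)}$. Once the base case is verified, the inductive or direct application of diamond-product multiplicativity is purely formal.
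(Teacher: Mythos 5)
Your proof is correct and follows the same route the paper intends: the paper derives this lemma directly from the factorization $C_{n} = B_{2}^{\diamond n}$ and the multiplicativity of $F_{B}$ under the diamond product, leaving the base case implicit. Your explicit verification that $F_{B}(B_{2}) = s + 2 \cdot M_{(1)}$ via equation~(\ref{equation_type_B_version_2}) is accurate and simply fills in the detail the paper omits.
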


\begin{theorem}
For $p$ an odd prime 
the cyclotomic polynomial $\Phi_{4p}$
divides the signed descent set polynomial $Q^{\pm}_{p}(t)$.
\end{theorem}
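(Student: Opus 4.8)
The plan is to show that a primitive $4p$th root of unity $\zeta$ is a root of $Q^{\pm}_{p}(t)$; since $\Phi_{4p}$ is the (monic, irreducible) minimal polynomial of $\zeta$ over $\Zzz$, this forces $\Phi_{4p}$ to divide the integer polynomial $Q^{\pm}_{p}(t)$. Because $\gcd(4,p)=1$, I would write $\zeta = \nu\mu$ with $\nu$ a primitive $4$th root of unity and $\mu$ a primitive $p$th root, so that $\zeta^{a} = \nu^{a}\mu^{a}$ depends on the exponent $a$ only through $a \bmod 4$ and $a \bmod p$. Everything then reduces to computing the signed descent set statistics $\beta^{\pm}_{p}(S)$ modulo $4$ and modulo $p$, which I would extract from the formula $F_{B}(C_{p}) = (s + 2 M_{(1)})^{p}$ for the type $B$ quasisymmetric function of the cubical lattice, together with its $L^{B}$-expansion $F_{B}(C_{p}) = \sum_{S\subseteq[p]} \beta^{\pm}_{p}(S)\cdot L^{B}_{S}$.

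First I would reduce $(s + 2M_{(1)})^{p}$ modulo $p$. As $p$ is prime the interior binomial coefficients vanish, so $(s+2M_{(1)})^{p} \equiv s^{p} + 2^{p} M_{(1)}^{p} \bmod p$; using Fermat's little theorem $2^{p}\equiv 2$ and the congruence $M_{(1)}^{p}\equiv M_{(p)} \bmod p$ (the ``freshman's dream'' already exploited for the mod $p$ statistics in Section~\ref{section_twice_prime}) gives $F_{B}(C_{p}) \equiv s^{p} + 2 M_{(p)} \bmod p$. Translating these monomial type $B$ functions through the bijection $D$ of Section~\ref{section_proportion}, one has $s^{p} = M^{B}_{\varnothing}$ and $M_{(p)} = M^{B}_{\{1\}}$, the relevant compositions of $p+1$ being $(p+1)$ and $(1,p)$. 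Expanding in the $L^{B}$-basis via $M^{B}_{S} = \sum_{S \subseteq T}(-1)^{|T-S|} L^{B}_{T}$ and reading off the coefficient of $L^{B}_{T}$ then yields
$$ \beta^{\pm}_{p}(S) \equiv (-1)^{|S-\{1\}|} \bmod p . $$

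Next I would reduce the same identity modulo $4$. Every term of $(s+2M_{(1)})^{p}$ with at least two factors of $2M_{(1)}$ vanishes, and since $2p \equiv 2 \bmod 4$ one gets $F_{B}(C_{p}) \equiv s^{p} + 2\, s^{p-1} M_{(1)} \bmod 4$. Here $s^{p-1}M_{(1)} = M^{B}_{\{p\}}$ (the composition $(p,1)$ of $p+1$), and the same $L^{B}$-expansion gives $\beta^{\pm}_{p}(S) \equiv (-1)^{|S-\{p\}|} \bmod 4$. Combining the two congruences through $\zeta=\nu\mu$, and using $\nu^{(-1)^{k}} = (-1)^{k}\nu$ (since $\nu^{-1}=-\nu$) together with $\mu^{(-1)^{k}} = \Rere(\mu) + (-1)^{k}\cdot\Imim(\mu)\cdot\imagi$, I obtain, with the aid of $|S-\{p\}|+|S-\{1\}| \equiv |S\cap\{1,p\}| \bmod 2$,
$$ \zeta^{\beta^{\pm}_{p}(S)} = (-1)^{|S-\{p\}|}\cdot\nu\cdot\Rere(\mu) + (-1)^{|S\cap\{1,p\}|}\cdot\nu\cdot\imagi\cdot\Imim(\mu) . $$

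Summing over all $S \subseteq [p]$ then leaves a combination of the two elementary character sums $\sum_{S}(-1)^{|S-\{p\}|}$ and $\sum_{S}(-1)^{|S\cap\{1,p\}|}$, both of which vanish: each factors over the coordinates, the first being killed by any index in $[p]\setminus\{p\}$ and the second by the index $1$. The hypothesis that $p$ is an \emph{odd} prime is used precisely here, to guarantee that $1$ and $p$ are distinct elements of $[p]$. Hence $Q^{\pm}_{p}(\zeta)=0$ and the theorem follows. The only real labor is the bookkeeping in the two modular reductions and the passage between the $M^{B}$- and $L^{B}$-bases; once the two residue formulas for $\beta^{\pm}_{p}(S)$ are in hand, the vanishing of $Q^{\pm}_{p}(\zeta)$ is immediate. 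I note that, unlike the quadratic-factor arguments, this computation requires no appeal to the generalized Dehn--Sommerville relations, since no surviving factor of $\beta^{\pm}_{p}(S)$ remains in the final sums.
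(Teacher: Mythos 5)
Your proof is correct, and its computational core is the same as the paper's: both arguments reduce $F_{B}(C_{p}) = (s + 2 \cdot M_{(1)})^{p}$ modulo $p$ and modulo $4$, pass to the $L^{B}$-basis, and obtain the two congruences $\beta^{\pm}_{p}(S) \equiv (-1)^{|S-\{1\}|} \bmod p$ and $\beta^{\pm}_{p}(S) \equiv (-1)^{|S-\{p\}|} \bmod 4$. The divergence is in how the vanishing of $Q^{\pm}_{p}(\zeta)$ is extracted. The paper merges the two congruences via the Chinese remainder theorem into the four-case formula~(\ref{equation_signed_modulo_4_p}) for $\beta^{\pm}_{p}(S)$ modulo $4p$, and then cancels the sum by an involution on subsets: it claims that for $p \notin S$ the exponents of $S$ and $S \cup \{p\}$ differ by $2p$ modulo $4p$, so their powers of $\zeta$ cancel in pairs. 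You instead factor $\zeta = \nu\mu$ with $\nu$ a primitive fourth and $\mu$ a primitive $p$th root of unity, expand $\zeta^{\beta^{\pm}_{p}(S)}$ directly from the two congruences, and reduce $Q^{\pm}_{p}(\zeta)$ to two factored $\pm 1$ sums over $S \subseteq [p]$, both of which vanish. Your finish is not only valid but slightly more robust: the paper's involution as literally stated contains a sign slip, since adjoining $p$ \emph{flips} the residue of $\beta^{\pm}_{p}$ modulo $p$ (as $|(S \cup \{p\}) - \{1\}| = |S - \{1\}| + 1$), so the claimed congruence would force $\beta^{\pm}_{p}(S) \equiv -\beta^{\pm}_{p}(S) \bmod p$, which is impossible; the correct involution adjoins the element $1$, which fixes the residue modulo $p$ and shifts it by $2$ modulo $4$. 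Your real/imaginary splitting of $\mu^{\pm 1}$ is essentially the device the paper deploys in its next theorem (nondivisibility of $Q^{\pm}_{p}(t)$ by $\Phi_{4p}^{2}$), whereas the paper's explicit formula~(\ref{equation_signed_modulo_4_p}) is what gets reused there; so the two routes differ mainly in where the bookkeeping is spent.

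One small inaccuracy in your commentary: the oddness of $p$ is not used ``precisely'' to make $1$ and $p$ distinct. It enters earlier and more essentially, in $\gcd(4,p)=1$ (needed for the factorization $\zeta = \nu\mu$) and in $2p \equiv 2 \bmod 4$ (needed for your modulo $4$ reduction); the two character sums vanish as soon as $[p] \setminus \{p\}$ and $\{1,p\}$ are nonempty. This does not affect the correctness of the proof.
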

\begin{proof}
Observe that modulo $4$ we have 
\begin{eqnarray*}
F_{B}(C_{p})
 & \equiv &
(s + 2 \cdot M_{(1)})^{p} \\
 & \equiv &
s^{p} + 2 \cdot p \cdot s^{p-1} \cdot M_{(1)} \\
 & \equiv &
M_{(p+1)} + 2 \cdot p \cdot M_{(p,1)}  \\
 & \equiv &
M_{\varnothing} + 2 \cdot M_{\{p\}}  \\
 & \equiv &
\sum_{S\subseteq [p]} (-1)^{|S|} \cdot L^{B}_{S}
 +
2 \cdot \sum_{p \in S} (-1)^{|S|-1} \cdot L^{B}_{S} \\
 & \equiv &
\sum_{p \not\in S} (-1)^{|S|} \cdot L^{B}_{S}
 +
\sum_{p \in S} (-1)^{|S|-1} \cdot L^{B}_{S}  \bmod 4 .
\end{eqnarray*}
Hence the signed descent set statistics satisfy
$\beta^{\pm}_{p}(S) \equiv (-1)^{|S-\{p\}|} \bmod 4$
for $S \subseteq [p]$.
Now modulo $p$ we have
\begin{eqnarray*}
F_{B}(C_{p})
 & \equiv &
(s + 2 \cdot M_{(1)})^{p} \\
 & \equiv &
s^{p} + 2 \cdot M_{(p)} \\
 & \equiv &
M^{B}_{(p+1)} + 2 \cdot M^{B}_{(1,p)} \\
 & \equiv &
M_{\varnothing} + 2 \cdot M_{\{1\}}  \bmod p.
\end{eqnarray*}
This directly implies that
$\beta^{\pm}_{p}(S) \equiv (-1)^{|S-\{1\}|} \bmod p$.
Combining these two statements we obtain
\begin{equation}
       \beta^{\pm}_{p}(S)
  \equiv
       \left\{ \begin{array}{c c l}
              (-1)^{|S|}        & {} \bmod 4p &
 \mbox{ \rm if } 1,p \not\in S, \\
              (-1)^{|S|-1}      & {} \bmod 4p &
 \mbox{ \rm if } 1,p \in S, \\
   2 \cdot p + (-1)^{|S|}       & {} \bmod 4p &
 \mbox{ \rm if } 1 \not\in S, p \in S, \\
   2 \cdot p + (-1)^{|S|-1}     & {} \bmod 4p &
 \mbox{ \rm if } 1 \in S, p \not\in S. \\
               \end{array} \right.
\label{equation_signed_modulo_4_p}
\end{equation}
Observe that for $p \not\in S$ we have 
$\beta^{\pm}_{p}(S) \equiv \beta^{\pm}_{p}(S \cup \{p\}) + 2 \cdot p \bmod 4p$,
implying that
$\zeta^{\beta^{\pm}_{p}(S)} = - \zeta^{\beta^{\pm}_{p}(S \cup \{p\})}$
for $\zeta$ a $4p$-th primitive root of unity.
Now sum over all subsets of~$[p]$, and the result follows.
\end{proof}

\begin{theorem}
For $p$ an odd prime, 
$\Phi_{4p}^2$ does not divide the signed descent set polynomial
$Q^{\pm}_{p}(t)$. In fact,
evaluating the derivative of
the signed descent set polynomial
$Q^{\pm}_{p}(t)$ at $\zeta$,
where
$\zeta$ is a $4p$-th primitive root of unity,
gives
$$
   \zeta \cdot {Q^{\pm}_{p}}^{\prime}(\zeta)
 =
   \Imim(\zeta) \cdot \imagi
     \cdot
   (-1)^{(p-1)/2} \cdot 2^{p} \cdot p \cdot E_{p-1}
.
$$
\end{theorem}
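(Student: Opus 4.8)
The plan is to evaluate $\zeta \cdot {Q^{\pm}_{p}}^{\prime}(\zeta) = \sum_{S\subseteq[p]}\beta^{\pm}_{p}(S)\cdot\zeta^{\beta^{\pm}_{p}(S)}$ by reducing each exponent modulo $4p$ via the congruence~(\ref{equation_signed_modulo_4_p}) established just above. Since $\zeta$ is a primitive $4p$th root of unity we have $\zeta^{2p}=-1$ and $\zeta^{\pm1}=\Rere(\zeta)\pm\Imim(\zeta)\cdot\imagi$, so that congruence rewrites uniformly as $\zeta^{\beta^{\pm}_{p}(S)}=(-1)^{|S\cap\{1,p\}|}\bigl(\Rere(\zeta)+(-1)^{|S-\{1\}|}\Imim(\zeta)\cdot\imagi\bigr)$. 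Splitting into real and imaginary parts and using the parity identity $|S\cap\{1,p\}|+|S-\{1\}|\equiv|S-\{p\}|\bmod 2$, I obtain
$$\zeta\cdot{Q^{\pm}_{p}}^{\prime}(\zeta)=\Rere(\zeta)\sum_{S}(-1)^{|S\cap\{1,p\}|}\beta^{\pm}_{p}(S)+\Imim(\zeta)\cdot\imagi\sum_{S}(-1)^{|S-\{p\}|}\beta^{\pm}_{p}(S).$$

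First I would dispose of the real part. Since $\beta^{\pm}_{p}(S)$ is the flag $h$-vector of the Eulerian cubical lattice $C_{p}$ of rank $p+1$, Proposition~\ref{proposition_cd} applies with $T=\{1,p\}$: the singleton $\{1\}$ is an interval of odd cardinality with $0,2\notin T$ (using $p\ge3$), so the first sum vanishes. Crucially, the proposition does \emph{not} apply to the second sum, since there $T=[p-1]$ is a single interval of even cardinality; this is exactly where the nonzero contribution originates.

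The heart of the argument is to compute $\Sigma:=\sum_{S}(-1)^{|S-\{p\}|}\beta^{\pm}_{p}(S)=\sum_{S}(-1)^{|S\cap[p-1]|}h_{S}$. I would evaluate the multivariate $\ab$-index of $C_{p}$ at $\av_{i}=1$ for all $i$, $\bv_{i}=-1$ for $i\in[p-1]$, and $\bv_{p}=1$. This forces $\cv_{i}=0$ for $i<p$, $\cv_{p}=2$, $\dv_{i,i+1}=-2$ for $i\le p-2$, and $\dv_{p-1,p}=0$. Writing the index in $\cd$-form (Lemma~\ref{multivariate_cd}), the only surviving monomial is $\dv_{1,2}\dv_{3,4}\cdots\dv_{p-2,p-1}\cv_{p}$ (here $p-1$ even is used), so $\Sigma=\kappa\cdot(-2)^{(p-1)/2}\cdot 2$, where $\kappa$ is the coefficient of $\dv^{(p-1)/2}\cv$ in $\Psi(C_{p})$. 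To find $\kappa$ I use $\Psi(C_{p})=\omega(\av\cdot\Psi(B_{p}))$ from~(\ref{equation_cubical}): the $\ab$-monomials mapping to $\dv^{(p-1)/2}\cv$ under $\omega$ are precisely $\av\bv\av\bv\cdots\av\bv\,x$ with $x\in\{\av,\bv\}$, each carrying a factor $2^{(p-1)/2}$, whence $\kappa=2^{(p-1)/2}\bigl(\beta_{p}(\{1,3,\ldots,p-2\})+\beta_{p}(\{1,3,\ldots,p-2,p-1\})\bigr)$, with $\beta_{p}$ now the ordinary (unsigned) descent statistics.

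The final and most delicate step is the combinatorial identity $\beta_{p}(\{1,3,\ldots,p-2\})+\beta_{p}(\{1,3,\ldots,p-2,p-1\})=p\cdot E_{p-1}$. Both descent sets fix the comparisons at positions $1,\ldots,p-2$, forcing $\pi_{1}>\pi_{2}<\pi_{3}>\cdots>\pi_{p-1}$ on the first $p-1$ entries, and they differ only in the unconstrained comparison at position $p-1$; so their union counts the permutations of $[p]$ whose first $p-1$ entries form a down-up alternating sequence, with the last entry arbitrary. Choosing the last value ($p$ ways) together with a down-up alternating arrangement of the remaining $p-1$ values ($E_{p-1}$ ways) yields $p\cdot E_{p-1}$. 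Combining gives $\kappa=2^{(p-1)/2}\,p\,E_{p-1}$ and $\Sigma=(-1)^{(p-1)/2}\,2^{p}\,p\,E_{p-1}$, which produces the stated value of $\zeta\cdot{Q^{\pm}_{p}}^{\prime}(\zeta)$. Since $\Imim(\zeta)\ne0$ and $E_{p-1}\ge1$, this derivative is nonzero, so $\zeta$ is a simple root and $\Phi_{4p}^{2}\nmid Q^{\pm}_{p}(t)$. I expect the main obstacle to be the evaluation of $\kappa$: correctly isolating the $\omega$-preimages of $\dv^{(p-1)/2}\cv$ and then proving the alternating-permutation identity that introduces the factor $E_{p-1}$.
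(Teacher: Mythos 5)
Your proof is correct and follows the paper's own argument essentially step for step: the same reduction of the exponents modulo $4p$, the same split into real and imaginary parts with the real sum killed by Proposition~\ref{proposition_cd}, the same substitution $\av_{i}=1$, $\bv_{1}=\cdots=\bv_{p-1}=-1$, $\bv_{p}=1$ into the multivariate $\cd$-index isolating the unique surviving monomial $\dv_{1,2}\cdots\dv_{p-2,p-1}\cv_{p}$, and the same computation of its coefficient via $\Psi(C_{p})=\omega(\av\cdot\Psi(B_{p}))$. The only deviation is the final counting step: where the paper cites MacMahon's Multiplication Theorem to get $\beta_{p}(\{1,3,\ldots,p-2\})+\beta_{p}(\{1,3,\ldots,p-2,p-1\})=p\cdot E_{p-1}$, you prove that identity directly by the (correct) bijection that chooses the last entry and arranges the remaining $p-1$ values in down-up order.
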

\begin{proof}
From~(\ref{equation_signed_modulo_4_p})
we have:
\begin{eqnarray*}
     \zeta \cdot {Q^{\pm}_{p}}^{\prime}(\zeta)
 & = &
     \sum_{S\subseteq [p]} \beta^{\pm}_{p}(S) \cdot \zeta^{\beta^{\pm}_{p}(S)} \\
 & = &
     \sum_{S\subseteq [p]}
            \beta^{\pm}_{p}(S)
          \cdot
            (-1)^{|S \cap \{1,p\}|}
          \cdot
            \zeta^{(-1)^{|S-\{1\}|}} \\
 & = &
     \Rere(\zeta)
   \cdot
     \sum_{S\subseteq [p]}
            \beta^{\pm}_{p}(S)
          \cdot
            (-1)^{|S \cap \{1,p\}|} \\
 &   &
+
     \Imim(\zeta)
  \cdot
     \imagi
  \cdot
     \sum_{S\subseteq [p]}
            \beta^{\pm}_{p}(S)
          \cdot
            (-1)^{|S \cap \{1,p\}|}
          \cdot
            (-1)^{|S-\{1\}|} .
\end{eqnarray*}
The first sum is zero by
Proposition~\ref{proposition_cd}.
The second sum simplifies to
$$
     \Imim(\zeta)
  \cdot
     \imagi
  \cdot
     \sum_{S\subseteq [p]}
            \beta^{\pm}_{p}(S)
          \cdot
            (-1)^{|S \cap [1,p-1]|}  .
$$
This sum can evaluated by setting
$\av_{j} = 1$, $\bv_{1} = \cdots = \bv_{p-1} = -1$
and $\bv_{p} = 1$ in the
the multivariate $\ab$-index of the cubical lattice $C_{p}$.
Observe that $\cv_{1} = \cdots = \cv_{p-1} = 0$
and $\dv_{p-1,p} = 0$. Hence the only surviving $\cd$-monomial
is $\dv_{1,2} \cdots \dv_{p-2,p-1} \cv_{p}$.
The coefficient of this monomial is computed as follows:
\begin{eqnarray*}
 \left[\dv^{(p-1)/2} \cv\right] \Psi(C_{p})
 & = &
2^{(p-1)/2}
 \cdot
 \left[(2\dv)^{(p-1)/2} \cv\right] \Psi(C_{p}) \\
 & = &
2^{(p-1)/2}
 \cdot
\left(
 \left[(\av\bv)^{(p-1)/2} \av\right] \av \cdot \Psi(B_{p}) 
\right. \\
 &   &
\left.
   +
 \left[(\av\bv)^{(p-1)/2} \bv\right] \av \cdot \Psi(B_{p})
\right) \\
 & = &
2^{(p-1)/2}
 \cdot
p
 \cdot
 \left[\bv (\av\bv)^{(p-3)/2}\right] \Psi(B_{p-1})  \\
 & = &
2^{(p-1)/2}
 \cdot
p
 \cdot
E_{p-1}  .
\end{eqnarray*}
The third step is
MacMahon's ``Multiplication Theorem'';
see~\cite[Article~159]{MacMahon}.
It can be stated in terms of the $\ab$-indices as follows:
$$   [u \av v] \Psi(B_{m+n})  +  [u \bv v] \Psi(B_{m+n})
   =
     \binomial{m+n}{m} \cdot 
     [u] \Psi(B_{m})  +  [v] \Psi(B_{n})  ,  $$
where $u$ and $v$ have degrees $m-1$ and $n-1$, respectively.
The monomial itself evaluates to $(-2)^{(p-1)/2} \cdot 2$,
since
$\dv_{1,2} = \cdots = \dv_{p-2,p-1} = -2$ and $\cv_{p} = 2$.
Combining all the factors, the evaluation at $\zeta$ follows.
\end{proof}

\newcommand{\bl}[1]{\mathbf{{\textcolor{blue}{#1}}}}
\begin{table}[ht!]
$$
\begin{array}{r r l}
 n & \mbox{\rm degree} & \mbox{\rm cyclotomic factors of $Q_{n}(t)$} \\ \hline
 3 &            2 & \bl{\Phi_{2}} \\
 4 &            5 & \bl{\Phi_{4}^{2}} \\
 5 &           16 & \bl{\Phi_{2}^{2}} \cdot \bl{\Phi_{10}} \\
 6 &           61 & \bl{\Phi_{2}^{2}} \cdot \bl{\Phi_{6}^{2}} \cdot \Phi_{10} \\
 7 &          272 & \bl{\Phi_{2}} \\
 8 &         1385 & \bl{\Phi_{4}^{2}} \cdot \Phi_{28} \\
 9 &         7936 & \bl{\Phi_{2}^{2}} \cdot \bl{\Phi_{6}} \cdot \Phi_{18} \\
10 &        50521 & \bl{\Phi_{2}^{2}} \cdot \Phi_{6} \cdot \bl{\Phi_{10}^{2}} \cdot \Phi_{18} \cdot \Phi_{30} \\
11 &       353792 & \bl{\Phi_{2}} \cdot \Phi_{6} \cdot \bl{\Phi_{22}} \\
12 &      2702765
  & \bl{\Phi_{2}^{2}} \cdot \Phi_{6} \cdot \Phi_{10}
                  \cdot \Phi_{18} \cdot \Phi_{22} \cdot \Phi_{22}
                  \cdot \Phi_{66} \cdot \Phi_{110}
                  \cdot \Phi_{198} \\
13 &     22368256
  & \bl{\Phi_{2}} \cdot \bl{\Phi_{26}} \\
14 &    1.993 \cdot 10^{8}
  & \bl{\Phi_{2}} \cdot \Phi_{2} \cdot \Phi_{4} \cdot \bl{\Phi_{14}} \cdot \Phi_{14}
                          \cdot \Phi_{26} \cdot \Phi_{28}
                          \cdot \Phi_{182} \\
15 &   1.904 \cdot 10^{9}
  & - \\
16 &  1.939 \cdot 10^{10}
  &  \bl{\Phi_{4}^{2}} \cdot \Phi_{12} \cdot
                          \Phi_{20} \cdot \Phi_{44} \cdot
                          \Phi_{52} \cdot \Phi_{60} \cdot
                          \Phi_{156} \cdot \Phi_{220} \cdot
                          \Phi_{260} \cdot \Phi_{572}        \\
17 & 2.099 \cdot 10^{11}
  &  \bl{\Phi_{2}^{2}} \cdot \bl{\Phi_{34}}
\\
18 & 2.405 \cdot 10^{12}
  &
\bl{\Phi_{2}^2} \cdot
\bl{\Phi_{6}^2} \cdot
\Phi_{18} \cdot
\Phi_{34} \cdot
\Phi_{102} \cdot
\Phi_{306} \\
19 & 2.909 \cdot 10^{13}
  &
\bl{\Phi_{2}} \cdot
\bl{\Phi_{38}} \\
20 & 3.704 \cdot 10^{14}
  &
\bl{\Phi_{2}^2} \cdot
\Phi_{6} \cdot
\Phi_{10} \cdot
\Phi_{30} \cdot
\Phi_{34} \cdot
\Phi_{38}^2 \cdot
\Phi_{102} \cdot
\Phi_{114} \cdot
\Phi_{170} \\
& &
\cdot
\Phi_{190} \cdot
\Phi_{510} \cdot
\Phi_{570} \cdot
\Phi_{646} \cdot
\Phi_{1938} \cdot
\Phi_{3230} \cdot
\Phi_{9690} \\
21 & 4.951 \cdot 10^{15}
  &
\bl{\Phi_{2}} \cdot
\Phi_{6} \cdot
\Phi_{14} \cdot
\Phi_{42} \\
22 & 6.935 \cdot 10^{16}
  &
\bl{\Phi_{2}} \cdot
\Phi_{2} \cdot
\Phi_{14} \cdot
\bl{\Phi_{22}} \cdot
\Phi_{22} \cdot
\Phi_{154} \\
23 & 1.015 \cdot 10^{18}
  &
-
\end{array}
$$
\caption{Cyclotomic factors of $Q_{n}(t)$.}
\label{table_P}
\end{table}

\begin{table}[ht!]
$$
\begin{array}{r r l}
 n & \mbox{\rm degree} &
\mbox{\rm cyclotomic factors of $Q^{\pm}_{n}(t)$} \\ \hline
2 & 3
&
\bl{\Phi_{4}} \\
3 & 11
&
\bl{\Phi_{4}} \cdot
\Phi_{8} \cdot
\bl{\Phi_{12}} \\
4 & 57
&
\bl{\Phi_{4}} \cdot
\Phi_{16} \cdot
\Phi_{32} \\
5 & 361
&
\bl{\Phi_{4}} \cdot
\Phi_{16} \cdot
\bl{\Phi_{20}} \cdot
\Phi_{32} \cdot
\Phi_{80} \\
6 & 2763
&
\bl{\Phi_{4}} \cdot
\Phi_{8} \cdot
\Phi_{24} \cdot
\Phi_{32} \cdot
\Phi_{40} \cdot
\Phi_{96} \cdot
\Phi_{120} \cdot
\Phi_{160} \\
7 & 24611
&
\bl{\Phi_{4}} \cdot
\Phi_{8} \cdot
\Phi_{24} \cdot
\bl{\Phi_{28}} \cdot
\Phi_{32} \cdot
\Phi_{56} \cdot
\Phi_{168} \cdot
\Phi_{224} \\
8 & 250737
&
\bl{\Phi_{4}} \cdot
\Phi_{32} \cdot
\Phi_{64} \cdot
\Phi_{224} \cdot
\Phi_{448} \cdot
\Phi_{512} \\
9 & 2873041
&
\bl{\Phi_{4}} \cdot
\Phi_{12} \cdot
\Phi_{32} \cdot
\Phi_{36} \cdot
\Phi_{64} \cdot
\Phi_{96} \cdot
\Phi_{192} \cdot
\Phi_{288} \cdot
\Phi_{448} \\
&& \cdot
\Phi_{512}^{2} \cdot
\Phi_{576} \cdot
\Phi_{1344} \cdot
\Phi_{1536} \cdot
\Phi_{4032} \cdot
\Phi_{4608} \\
10 & 36581523
&
\bl{\Phi_{4}} \cdot
\Phi_{8} \cdot
\Phi_{24} \cdot
\Phi_{40} \cdot
\Phi_{64} \cdot
\Phi_{72} \cdot
\Phi_{120} \cdot
\Phi_{192} \cdot
\Phi_{320} \cdot
\Phi_{360} \\
&& \cdot
\Phi_{448} \cdot
\Phi_{512} \cdot
\Phi_{960} \cdot
\Phi_{1344} \cdot
\Phi_{1536} \cdot
\Phi_{2240} \cdot
\Phi_{2560} \cdot
\Phi_{6720} \cdot
\Phi_{7680} \\
11 & 5.123 \cdot 10^{8}
&
\bl{\Phi_{4}} \cdot
\Phi_{8} \cdot
\Phi_{40} \cdot
\bl{\Phi_{44}} \cdot
\Phi_{64} \cdot
\Phi_{88} \cdot
\Phi_{192} \cdot
\Phi_{320} \cdot
\Phi_{440} \\
&& \cdot
\Phi_{512} \cdot
\Phi_{704} \cdot
\Phi_{960} \cdot
\Phi_{2112} \cdot
\Phi_{2560} \cdot
\Phi_{3520} \cdot
\Phi_{5632} \\
12 & 7.828 \cdot 10^{9}
&
\bl{\Phi_{4}} \cdot
\Phi_{16} \cdot
\Phi_{32} \cdot
\Phi_{48} \cdot
\Phi_{96} \cdot
\Phi_{160} \cdot
\Phi_{176} \cdot
\Phi_{288} \cdot
\Phi_{352} \\
&& \cdot
\Phi_{480} \cdot
\Phi_{512} \cdot
\Phi_{528} \cdot
\Phi_{1056} \cdot
\Phi_{1440} \cdot
\Phi_{1536} \cdot
\Phi_{1760} \cdot
\Phi_{2560} \\
&& \cdot
\Phi_{3168} \cdot
\Phi_{4608} \cdot
\Phi_{5280} \cdot
\Phi_{5632} \\
13 & 1.296 \cdot 10^{11}
&
\bl{\Phi_{4}} \cdot
\Phi_{16} \cdot
\Phi_{32} \cdot
\Phi_{48} \cdot
\bl{\Phi_{52}} \cdot
\Phi_{160} \cdot
\Phi_{208} \cdot
\Phi_{352} \cdot
\Phi_{416} \\
&& \cdot
\Phi_{512}^{2} \cdot
\Phi_{624} \cdot
\Phi_{1536} \cdot
\Phi_{1760} \cdot
\Phi_{2080} \cdot
\Phi_{4576} \cdot
\Phi_{5632} \cdot
\Phi_{6656} \\
14 & 2.310 \cdot 10^{12}
&
\bl{\Phi_{4}} \cdot
\Phi_{8} \cdot
\Phi_{32} \cdot
\Phi_{56} \cdot
\Phi_{104} \cdot
\Phi_{224} \cdot
\Phi_{352} \cdot
\Phi_{416} \cdot
\Phi_{512} \\
&& \cdot
\Phi_{728} \cdot
\Phi_{1536} \cdot
\Phi_{2464} \cdot
\Phi_{2912} \cdot
\Phi_{3584} \cdot
\Phi_{4576} \cdot
\Phi_{5632} \cdot
\Phi_{6656} \\
15 & 4.411 \cdot 10^{13}
&
\bl{\Phi_{4}} \cdot
\Phi_{8} \cdot
\Phi_{12} \cdot
\Phi_{20} \cdot
\Phi_{24} \cdot
\Phi_{32} \cdot
\Phi_{40} \cdot
\Phi_{56} \cdot
\Phi_{60} \\
&& \cdot
\Phi_{96} \cdot
\Phi_{120} \cdot
\Phi_{160} \cdot
\Phi_{168} \cdot
\Phi_{224} \cdot
\Phi_{280} \cdot
\Phi_{416} \cdot
\Phi_{480} \cdot
\Phi_{512} \\
&& \cdot
\Phi_{672} \cdot
\Phi_{840} \cdot
\Phi_{1120} \cdot
\Phi_{1248} \cdot
\Phi_{1536} \cdot
\Phi_{2080} \cdot
\Phi_{2560} \cdot
\Phi_{2912} \cdot
\Phi_{3360} \\
&& \cdot
\Phi_{5632} \cdot
\Phi_{6240} \cdot
\Phi_{6656} \cdot
\Phi_{7680} \cdot
\Phi_{8736} \\
16 & 8.986 \cdot 10^{14}
&
\bl{\Phi_{4}} \cdot
\Phi_{64} \cdot
\Phi_{128} \cdot
\Phi_{192} \cdot
\Phi_{320} \cdot
\Phi_{640} \cdot
\Phi_{896} \cdot
\Phi_{960} \\
&& \cdot
\Phi_{1024} \cdot
\Phi_{1664} \cdot
\Phi_{3072} \cdot
\Phi_{4480} \cdot
\Phi_{5120} \cdot
\Phi_{8320} \\
17 & 1.945 \cdot 10^{16}
&
\bl{\Phi_{4}} \cdot
\Phi_{64} \cdot
\bl{\Phi_{68}} \cdot
\Phi_{128} \cdot
\Phi_{640} \cdot
\Phi_{896} \\
&& \cdot
\Phi_{1024}^{2} \cdot
\Phi_{1088} \cdot
\Phi_{2176} \cdot
\Phi_{4480} \cdot
\Phi_{5120} \\
18 & 4.458 \cdot 10^{17}
&
\bl{\Phi_{4}} \cdot
\Phi_{8} \cdot
\Phi_{24} \cdot
\Phi_{72} \cdot
\Phi_{128} \cdot
\Phi_{136} \cdot
\Phi_{384} \cdot
\Phi_{408} \cdot
\Phi_{640} \\
&& \cdot
\Phi_{1024} \cdot
\Phi_{1152} \cdot
\Phi_{1224} \cdot
\Phi_{1920} \cdot
\Phi_{2176} \cdot
\Phi_{3072} \cdot
\Phi_{5760} \cdot
\Phi_{6528} \cdot
\Phi_{9216}
\end{array}
$$
\caption{Cyclotomic factors of $Q^{\pm}_{n}(t)$.}
\label{table_P_pm}
\end{table}

\section{Concluding remarks}

Is there a reason why $\rho(n) - 1/2$ factors so nicely?
See Table~\ref{table_one}.

The two main results
for unsigned permutations
in Section~\ref{section_proportion},
Theorems~\ref{theorem_main}
and~\ref{theorem_knapsack},
can also be proved using the
$\ab$-index and the mixing operator;
see~\cite{Ehrenborg_Fox}.
We have omitted this approach since Kummer's theorem and
the quasisymmetric functions
are more succinct in this case.

Tables~\ref{table_P} and~\ref{table_P_pm} contain cyclotomic factors
of polynomials $Q_n(t)$ and $Q^{\pm}_n(t)$ for small $n$. Those factors
whose presence is explained in this paper are highlighted in boldface.
Here are several observations
about the data in Table~\ref{table_P}:
\begin{itemize}
\item[(i)] 
All the indices $k$ of cyclotomic factors $\Phi_k$ of the
polynomials $Q_n(t)$ are even.

\item[(ii)] 
Any prime factor $p$ that occurs in an index of a cyclotomic
factor of $Q_{n}(t)$ is less than or equal to $n$.

\item[(iii)] 
If $\Phi_{k_{1}}$ and $\Phi_{k_{2}}$ are factors
of $Q_{n}(t)$, so is $\Phi_{\gcd(k_{1},k_{2})}$. That is, the set
of indices is closed under the meet operation in the divisor lattice.

\item[(iv)] 
If $k_{1}$ divides $k_{2}$,
$k_{2}$ divides $k_{3}$ and $\Phi_{k_{1}}$ and $\Phi_{k_{3}}$
occur as factors in $Q_{n}(t)$, then so does $\Phi_{k_{2}}$.
This is convexity in the divisor lattice.

\item[(v)] 
If both $\Phi_{k_{1}}$ and $\Phi_{k_{2}}$
divide $Q_{n}(t)$, where $k_{1}$ divides $k_{2}$,
then the multiplicity of $\Phi_{k_{1}}$ is greater than or equal to the
multiplicity of $\Phi_{k_{2}}$.

\item[(vi)] 
If $p$ is not a Mersenne prime
then the largest cyclotomic factor occurring in $Q_{p}(t)$ is $\Phi_{2p}$.

\item[(vii)] 
When $\rho(n) \neq 1/2$ then there are no cyclotomic factors
in the descent set polynomial $Q_{n}(t)$.

\item[(viii)] 
For all primes $p$ we conjecture that $\Phi_{2p}^2$ divides $Q_{2p}$.
\end{itemize}
Moreover for the signed descent set polynomial we observe that:
\begin{itemize}
\item[(ix)]
For $n \geq 3$ the
the cyclotomic polynomial $\Phi_{4n}$
divides the signed descent set polynomial $Q^{\pm}_{n}(t)$.

\item[(x)]
For $n \geq 5$ the
the cyclotomic polynomial $\Phi_{4n(n-1)}$
divides the signed descent set polynomial $Q^{\pm}_{n}(t)$.
\end{itemize}
Can these phenomena be explained?

For what pairs of an integer $n$ and a prime number $p$
does the descent set statistic~$\beta_{n}(S)$ only
take two values modulo $p$?

Finally, we end with two number-theoretic questions.
Are there infinitely many primes
whose binary expansion
has three $1$'s?
The only reference for these primes we found
is The On-Line Encyclopedia of Integer Sequences,
sequence A081091.
Are there any more prime powers with two or three ones in
its binary expansion?

\section*{Acknowledgements}

The authors thank the referee for
improving the proof of Theorem~\ref{theorem_main}.
The authors also thank
the MIT Mathematics Department where
this research was carried out.
The second author was partially
supported by
National Security Agency grant H98230-06-1-0072,
and the third author was partially
supported by
National Science Foundation grant DMS-0604423.

\newcommand{\journal}[6]{{\sc #1,} #2, {\it #3} {\bf #4} (#5), #6.}
\newcommand{\book}[4]{{\sc #1,} ``#2,'' #3, #4.}
\newcommand{\bookf}[5]{{\sc #1,} ``#2,'' #3, #4, #5.}
\newcommand{\thesis}[4]{{\sc #1,} ``#2,'' Doctoral dissertation, #3, #4.}
\newcommand{\springer}[4]{{\sc #1,} ``#2,'' Lecture Notes in Math.,
                       Vol.\ #3, Springer-Verlag, Berlin, #4.}
\newcommand{\preprint}[3]{{\sc #1,} #2, preprint #3.}
\newcommand{\preparation}[2]{{\sc #1,} #2, in preparation.}
\newcommand{\appear}[3]{{\sc #1,} #2, to appear in {\it #3}}
\newcommand{\submitted}[4]{{\sc #1,} #2, submitted to {\it #3}, #4.}
\newcommand{\JCTA}{J.\ Combin.\ Theory Ser.\ A}
\newcommand{\AdvancesinMathematics}{Adv.\ Math.}
\newcommand{\JournalofAlgebraicCombinatorics}{J.\ Algebraic Combin.}

\newcommand{\communication}[1]{{\sc #1,} personal communication.}


{

}

\bigskip
\noindent
{\em D.\ Chebikin
     Department of Mathematics,
     MIT,
     Cambridge, MA~02139}, \\
{\em R.\ Ehrenborg
     Department of Mathematics,
     University of Kentucky,
     Lexington, KY~40506}, \\
{\em P.\ Pylyavskyy,
     Department of Mathematics,
     University of Michigan, 
     Ann Arbor, MI 48109}, \\
{\em M.\ Readdy,
     Department of Mathematics,
     University of Kentucky,
     Lexington, KY~40506}.

\end{document}